\def\thickhline{%
  \noalign{\ifnum0=`}\fi\hrule \@height \thickarrayrulewidth \futurelet
   \reserved@a\@xthickhline}
\def\@xthickhline{\ifx\reserved@a\thickhline
               \vskip\doublerulesep
               \vskip-\thickarrayrulewidth
             \fi
      \ifnum0=`{\fi}}
\newlength{\thickarrayrulewidth}
\newcolumntype{H}{>{\setbox0=\hbox\bgroup}c<{\egroup}@{}}
\newtheorem{theorem}{Theorem}
\newcommand{\GG}[1]{}
\title{Order allocation, rack allocation and rack sequencing for pickers in a mobile rack environment}
\author{Cristiano Arbex Valle$^1$ \and John E Beasley$^2$}
\begin{document}

\maketitle

\begin{center} 
{\footnotesize

$^1$Departamento de Ci\^{e}ncia da Computa\c{c}\~{a}o, \\
 Universidade Federal de Minas Gerais, \\
 Belo Horizonte, MG 31270-010, Brasil \\
 arbex@dcc.ufmg.br \\ \vspace{0.3cm}
$^2$Brunel University \\ Mathematical Sciences, UK \\ john.beasley@brunel.ac.uk \\
}
\end{center} 

\begin{abstract}
In this paper we investigate the problem of simultaneously allocating orders and mobile storage racks to static pickers.  Here storage racks are allocated to pickers to enable them to pick all of the products for the orders that have been allocated to them. Problems of the type considered here arise in facilities operating as robotic mobile fulfilment systems.

We present a formulation of the problem of allocating orders and racks to pickers as an integer program and discuss the complexity of the problem. We present two heuristics (matheuristics) for the problem, one using partial integer optimisation, that are directly based upon our formulation. 

We also consider the problem of how to sequence the racks for presentation at each individual picker and formulate this problem as an integer program. We prove that, subject to certain conditions being satisfied, a feasible rack sequence for all orders can be produced by focusing on just a subset of the orders to be dealt with by the picker.

Computational results are presented, both for order and rack allocation, and for rack sequencing, for randomly generated test problems (that are made publicly available) involving up to 1000 products, 200 orders, 500 racks and 10 pickers.

\end{abstract}

{\bf Keywords:}  integer programming, inventory management, mobile storage racks, order picking, partial integer optimisation, rack sequencing, robotic mobile fulfilment systems

\section{Introduction}

In this paper we study two problems that arise in mobile robotic fulfilment centres. In such centres mobile robots bring moveable racks of shelves containing inventory to static pickers, so that these pickers can pick the items needed for customer orders.
Robotic Mobile Fulfilment Systems (RMFS)
date from 1987~\citep{koster2019} and 
are increasing in popularity and use, with there now being over 30 suppliers of such systems.
\cite{koster2019} notes that such systems are ideally suited for Internet retailers in the B2C (business to consumer) market, who require the picking of relatively small orders (so with only a few inventory items to consolidate per order) from a wide product range.

With respect to the relevance and usage of mobile robotic fulfilment systems  the archetypal example of such a B2C company would be Amazon, who are well-known for their use of Kiva robots to move racks of shelves to pickers in their more modern fulfilment centres. 
At the time of writing Amazon operates some 175 fulfilment centres around the world involving more than 150 million square feet of space. 26 of these centres make use of over 100,000 Kiva robots for bringing racks of inventory to static pickers for order picking~\citep{amazon19, amazon19a}. Other companies operate similar systems in the B2C market, for example Alibaba with Quicktron robots.

Since the pioneering use of such robots by Amazon many other companies have followed suit~\citep{banker16}, with a recent report~\citep{sanders19} estimating that the worldwide sales of warehousing and logistics robots will reach US\$30.8 billion by the end of 2022, with robot unit shipments reaching 938,000 units per year by 2022.

In this section we first illustrate the order allocation, rack allocation and rack sequencing problems considered in this paper. We then discuss the motivation behind our work and the research gap we address via our research questions. We also detail what we believe to be the 
contribution of this paper to the literature. 
Finally we outline the structure of the paper.

\subsection{Order allocation, rack allocation and rack sequencing}

To illustrate the problems dealt with in this paper consider Figure~\ref{fig1} which shows, in schematic form,  a 
two-dimensional view of a rack subdivided into storage locations in which products of different types can be stored. The key feature of a rack such as the one shown in 
Figure~\ref{fig1} is that it is \textbf{\emph{mobile}}, easily moved. Movement of the rack is achieved by a small battery-powered mobile robot positioning itself under the rack and raising the lifting platform shown in Figure~\ref{fig1}, thereby 
lifting the rack off the ground. The mobile robot can then move the rack to any desired location. Once moved to a new  location the robot can remain with the rack, or lower the lifting platform to deposit the rack on the ground and move off to perform other duties. 

Racks of the type shown in Figure~\ref{fig1} are typically square in nature (so with four equally sized faces when viewed in three dimensions). Depending on the types of products stored (e.g.~their sizes) all four faces may be used to store different products. Alternatively only two opposite faces may be used, or just one face. 

\begin{figure}[!ht]
\centering
\includegraphics[width=0.5\textwidth]{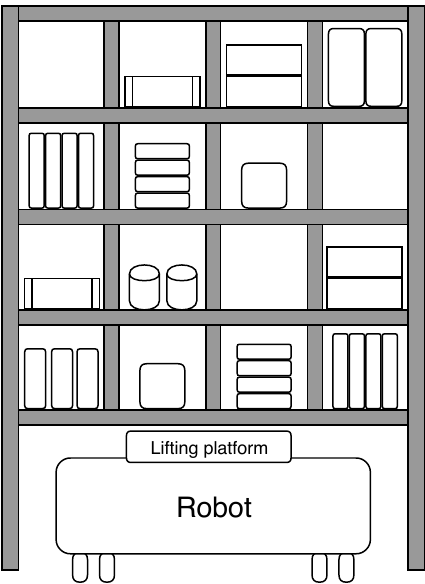}
  \caption{Mobile rack}
  \label{fig1}
\end{figure}

These mobile racks, also referred to as \textbf{\emph{pods}}, or \textbf{\emph{inventory pods}}, are commonly  found in  robotic mobile fulfilment systems. In such systems there are many such mobile racks within a facility. This can be seen in Figure~\ref{fig2} which, in schematic form, shows such a facility. In Figure~\ref{fig2}   each small square seen is a rack and they are placed in a standard rectangular pattern with aisles and cross-aisles. 
Also shown in Figure~\ref{fig2} are a number of stations at which pickers (typically human pickers) are positioned. Robots bring racks to these pickers in order that the set of products needed to fulfil a customer order can be picked from the racks for onward transmission to the customer.

\begin{figure}[!ht]
\centering
 \includegraphics[width=0.5\textwidth]{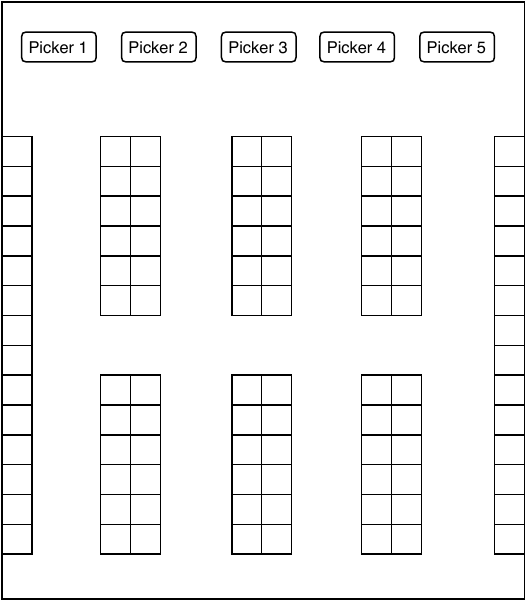}
  \caption{Rack storage and picking layout}
  \label{fig2}
\end{figure}

An example station at which a picker works can be seen in schematic form in 
Figure~\ref{fig3}. Racks are placed  in a queue leading past each picker and successively presented to the picker. The picker picks items from the presented rack and places them in one or more of the bins, each bin being associated with a single customer order (in some contexts these bins are known as crates or totes). 
Once all the items in an order have been picked the bin containing the completed order is put onto the conveyor, being carried away for further processing,  and a new empty bin is positioned in the space so created. 

\begin{figure}[!ht]
\centering
 \includegraphics[width=0.5\textwidth]{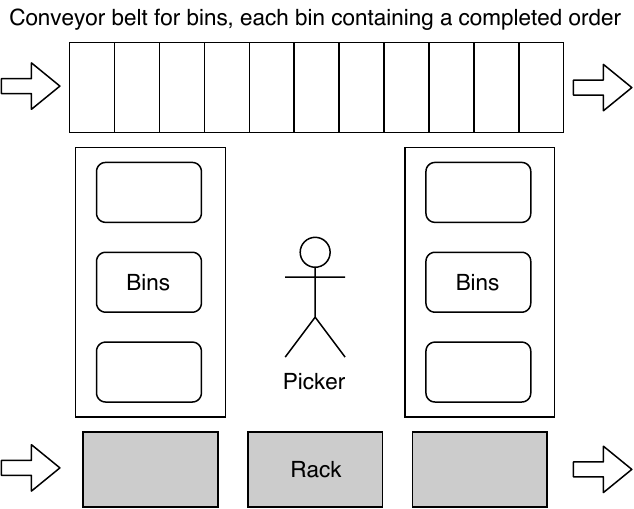}
  \caption{Picker station}
  \label{fig3}
\end{figure}

When picking items from the rack presented to them the picker can make use of multiple bins, 
i.e.~pick items associated with two or more orders from the same rack.
Note that the picker can only pick from a single face of the rack presented to them.
Once a picker has completely finished with a rack then it is moved on and (typically) taken back to the storage area. 
The next rack in the queue leading past the  picker is then presented and the process repeats. In some circumstances rack revisits (so the same rack being presented to the picker more than once in the sequence of rack presentations) are allowed.

Note here that Figures~\ref{fig1},\ref{fig2},\ref{fig3} have all been given in schematic form in order to illustrate to the reader the relevant fundamentals of a robotic mobile fulfilment system. Presenting such schematic form figures, rather than pictures of real-world systems, is common in the literature, e.g.~\citep{boysen17a, hansen18, li17, xiang18, yuan17, zou17}.

With regard to the operation of a facility such as shown in Figure~\ref{fig2} note that the robots themselves  can travel underneath any racks that are sitting immobile in storage, thereby leaving the aisles free for robots actually moving racks, and this significantly contributes to reducing aisle congestion. Moreover it is common to  operate a 
mixed-shelves/scattered storage policy, so that the same product is stored in multiple racks~\citep{weidinger18b}. Such a policy makes intuitive sense since if a popular product was only (for example) stored in one rack that rack may be very busy in moving between different pickers and so potentially delay the fulfilment of customer orders. Note here that with mobile racks one of the disadvantages of operating a scattered storage system when the racks are 
fixed~\citep{weidinger2019}, namely intensified replenishment due to having to workers having to walk amongst the fixed racks with new inventory to store in scattered locations, is much reduced as the racks are brought to replenishment stations by the robots.
Note also here that since the focus of this paper is on picking to fulfil customer orders we, for convenience, have not shown in Figure~\ref{fig2} other features that are also necessary for robotic mobile fulfilment systems to operate, such as replenishment stations and robot recharging stations.

\subsection{Motivation, research gaps, research questions}

Below we discuss our motivation in undertaking the research presented in this paper. We also discuss the research gaps we identified and the research questions our paper addresses.

\subsubsection{ Motivation}

The motivation behind the research undertaken in this paper was to investigate whether computationally effective algorithms for  order allocation, rack allocation and rack sequencing could be developed.
In particular we were motivated \emph{\textbf{to base any algorithms developed 
on explicit mathematical  formulations of the decision problems involved}}. 

As the reader may be aware metaheuristic algorithms 
for the solution of many decision problems encountered within Operations Research can often be developed without any recourse to an explicit mathematical  formulation of the problem under consideration. By contrast, in this paper, our motivation was both to present explicit mathematical  formulations of the problems involved, and to use these formulations as a basis for computationally effective solution algorithms.

\subsubsection{Research gaps}
Given the operation of a robotic mobile fulfilment system as discussed above this paper focuses on two decision problems:
\begin{itemize}
\item  Firstly, we 
consider the problem of simultaneously  allocating customer orders and mobile storage racks to pickers.  Here storage racks must be allocated to pickers to enable them to pick all of the products for the customer orders that have been allocated to them.  
\item Secondly, we consider  the problem of sequencing the racks for presentation to each picker so as to enable them to feasibly pick all of their allocated orders given that there are a limited number of bins that can be in use simultaneously 
at each picker station.
\end{itemize}

As a result of our survey of the literature (presented later below) we identified the following research gaps in relation to these two decision problems:
\begin{itemize}

\item the lack of mathematical formulations and solution approaches for simultaneously allocating both orders and racks to multiple pickers

\item the lack of mathematical formulations and solution approaches for the rack sequencing problem that explicitly consider rack inventory positions and decide the number of units of each product allocated to each order from each rack, both when a rack can only visit a picker once and when rack revisits are allowed

\end{itemize}

\subsubsection{Research questions}
Our research questions directly address the research gaps identified above, Specifically:

\begin{itemize}
\item Is it possible to develop a mathematical formulation for the problem for simultaneously allocating both orders and racks to multiple pickers? Moreover, if the answer to this question is positive, can that formulation then be used as an explicit basis for the development of computationally effective solution algorithms? 

\item  Is it possible to develop a mathematical formulation for the rack sequencing problem that explicitly considers rack inventory positions and decides the number of units of each product allocated to each order from each rack, both when a rack can only visit a picker once and when rack revisits are allowed? Moreover, if the answer to this question is positive, can that formulation then be used as an explicit basis for the development of a computationally effective solution algorithm?

\end{itemize}
We believe that the work presented below answers these research questions.

\subsection{Contribution}

We believe that  the contribution of this paper to the literature is:
\begin{itemize}

\item to be one of the first in the literature to present an optimisation based approach for simultaneously allocating both orders and racks to multiple pickers

\item to use our order and rack allocation formulation as the basis for two matheuristics for the problem of simultaneously allocating both orders and racks to multiple pickers

\item to present an innovative formulation for the rack sequencing problem that explicitly considers rack inventory positions and decides the number of units of each product allocated to each order from each rack, both when a rack can only visit a picker once and when rack revisits are allowed

\item to prove that, subject to certain conditions being satisfied, in   generating a feasible rack sequence for a single picker we can  initially neglect  all orders which comprise a single unit of one product since a feasible rack sequence for the remaining orders
can be easily modified to incorporate the neglected orders 

\item to investigate how our approaches for order and rack allocation, and rack sequencing, perform computationally for test problems that are made publicly available for use by future researchers

\end{itemize}

\subsection{Structure of the paper}

The structure of this paper is as follows. In 
Section~\ref{sec:review} we review the relevant literature on the two problems considered, order and rack allocation to pickers and rack sequencing
in a robotic mobile fulfilment system. 
In Section~\ref{sec:formjeb} we present our formulation which decides the orders and racks to be allocated to pickers. We present a number of additional constraints which can be added to the formulation and discuss the complexity of the problem.
In Section~\ref{sec:heuristics} we present two heuristics for the order and  rack allocation problem based upon our formulation. 
In Section~\ref{sec:formjebrack} we present our formulation to decide the rack sequence to be adopted at a picker when each mobile rack only visits the picker once (so no rack revisits to the picker are allowed). We  then extend this formulation to the case when rack revisits to the picker are allowed. We prove that, subject to certain conditions being satisfied, a feasible rack sequence for all orders can be produced by focusing on just a subset of the orders to be dealt with by the picker. 
In Section~\ref{sec:discussion} we discuss the application of the work presented in this paper within a dynamic environment (as new orders appear), as well as discuss the wider applicability of our work to other automated 
(non-mobile robot) fulfilment systems.
Section~\ref{sec:results} presents our computational results for the test problems we examined. Finally in Section~\ref{sec:conclusions} we present our conclusions and discuss future research directions.

\section{Literature review}
\label{sec:review}

Material handling activities can be differentiated as 
\emph{\textbf{parts-to-picker}} systems, in which (typically) automated units deliver the items to stationary pickers, and \emph{\textbf{picker-to-parts}} systems, in which pickers walk/ride through the warehouse collecting requested items. This paper deals with the parts-to-picker system 
associated with robotic mobile fulfilment systems,  
where automated units (robots) deliver racks of products to pickers in order to fulfil customer orders. In problems of this kind decisions need to be made as to the batching of  orders (i.e.~the set of orders to be assigned to each picker) as well as which racks should be moved to which picker, and too the sequence in which racks are presented to a picker.

There is an extensive literature on the problems of order batching and picking with reference to picker-to-parts systems. An early survey relating to different picking strategies can be found in~\cite{deKoster2007}, where order picking was shown to be a critical activity.
A very comprehensive state of the art classification and review of picking systems has also recently been presented 
by~\cite{gils2018}. 

By contrast however there is significantly less literature dealing with the parts-to-picker system as represented by robotic mobile fulfilment systems. Two very recent surveys make this point:
\begin{itemize}

\item \cite{azadeh2019} present a survey of work relating to warehousing systems involving automated and robotic handling. They state that robotic mobile fulfilment systems have hardly been studied scientifically, despite their increasing use in practice.

\item \cite{boysen19} present a survey of work relating to warehousing systems designed to allow online retailers to fulfil orders to personal consumers (so B2C, business to consumer).  With particular reference to one problem we consider in this paper, that of assigning orders to pickers, they state that to the best of  their knowledge no literature exists regarding the assignment of orders to picking stations.

\end{itemize}

\subsection{Literature review: structure}

Robotic mobile fulfilment systems  involve a number of different decision problems that are interrelated, but which for ease of solution are often treated separately.
\cite{weidinger18a}, in Appendix C of the online supplement associated with their paper, suggest a four level hierarchy for the picking process, which can be viewed as: 
\begin{enumerate}
\item order selection and order allocation to pickers; where order selection involves deciding the orders that will be processed next (from the complete set of orders not yet dealt with) 
\item rack assignment and rack sequencing at each picker; assigning racks for the orders allocated to each picker, deciding the rack sequence to be adopted at each picker 
\item rack storage assignment; assigning each rack a storage location when it needs one (e.g.~after a visit to a picker)
\item robot assignment for required rack movements (e.g.~storage to picker, picker to storage) and rack and robot movement path planning (route planning)
\end{enumerate}
In our view this hierarchy is ordered in the sense that the earlier levels involve decisions that have a larger impact on system effectiveness than the latter levels.  

The work presented in this paper deals with the first two levels of this hierarchy and is surveyed below. 
Work dealing with the remaining two levels is outside the scope of this paper, but the interested reader is referred to:
\begin{itemize}
\item \cite{beckschafer2014, nigam14, roy19, tessensohn2020, weidinger18a, xiang18, yuan18, zou18} for papers dealing with  storage issues; and~
\item \cite{defoort14, herrero11, qiu02, shahriari18, vis06, wang2019, xue19, yu16, zhang19}
for papers dealing with  movement issues.
\end{itemize}

To structure the literature survey presented below  of work relating to order selection/allocation and rack assignment/sequencing 
we make use of the structure suggested 
by~\cite{azadeh2019}, namely: 
\begin{compactitem}
\item system analysis, focusing on modelling techniques to estimate the performance
of the system without focusing on any optimisation
\item design optimisation, focusing on hardware optimisation of the system (e.g.~system layout)
\item  operations planning and control, focusing on the software optimisation
of the system (e.g.~rack sequencing).
\end{compactitem}
Table~\ref{table_lit} presents a short summary overview of the literature surveyed.

\begin{table}[!ht]
\centering
{\scriptsize
\renewcommand{\tabcolsep}{1mm} 
\renewcommand{\arraystretch}{1.3} 
\begin{tabular}{|llll|}
\hline
Category & Paper & Research focus & Methodology \\
\hline
System analysis &   \cite{lamballais17} &   Maximum order throughput, order cycle time, 
  & Queueing theory, simulation
   \\
&    &   Robot utilisation  &    \\
                           &   \cite{bozer18} &   Kiva  and miniload  systems &  Simulation  \\
                           &    \cite{hansen18} &   Interviews  &  Case study  \\       
                           &    &     &    \\
Design optimisation &  \cite{yuan17} &  Optimal number and velocity of  robots    &  Queueing theory, simulation \\
                           &    &     &    \\
Operations planning  &  \cite{boysen17a}  & Order and rack sequencing    &   Mixed-integer programming, \\
and control      &    &     &  heuristics   \\
                           &  \cite{li17}  &   Rack allocation  &    Integer programming, \\
     &    &     &  heuristic  \\
                           &    \cite{zou17} &  Robot allocation   & Queueing theory, simulation,    \\
                           &    &     &  heuristic  \\
                           &   \cite{xiang18} &  Order batching, rack allocation   &  Integer programming,  \\
   &    &     &   heuristics \\
\hline
\end{tabular}
}
\caption{Literature: summary overview}
\label{table_lit}
\end{table}

Note here that in the literature survey below we are concerned with the problem as described above, so robots bringing mobile racks to stationary pickers. It is important to be aware here that in the literature there are papers 
(e.g.~\cite{boysen17,foroughi20}) that deal with mobile racks, but where the mobility relates to creating aisles between racks. In such situations racks are typically long and closely packed together. When a product has to be picked it may be that a number of adjacent racks have to be moved so that an aisle can be created to access the rack holding the required product. This problem is outside the scope of this paper.  Similarly work dealing with automated warehouses, but where the automation does not relate to mobile robots moving racks to pickers, is outside the scope of this paper.

\subsection{System analysis}

\cite{lamballais17} consider a RMFS and used queueing theory to analytically estimate maximum order throughput, average order cycle time and robot utilisation for both single-line (one product) orders and multi-line (multi-product) orders. Simulation was used to validate their  analytic estimates. They conclude that their
analytical models accurately estimated robot utilisation, workstation utilisation and order cycle time. In addition they found that maximum order throughput was quite insensitive to the length-to-width ratio of the storage area and that maximum order throughput was affected by the location of the pickers around the storage area.

\cite{bozer18} present a paper dealing with a simulation based comparison of two goods-to-picker systems, one a Kiva system (such as we consider in this paper) and the other a miniload system. In a miniload  system products are typically stored  in trays (containers, bins) in long aisles of fixed racks. To satisfy a customer order one (or more) trays containing the appropriate products ordered are retrieved from the aisles
by an automated storage/retrieval  machine  and delivered to a picker by conveyor. Once products have been taken by the picker from a tray it is returned to the fixed racks for storage (again using the automated machine). They conclude that (in the context of their simulation)
a miniload system with four aisles yields approximately the same throughput as a Kiva system
with 50 robots. They state that their results indicate that a favourable balance between picker and equipment
(miniload or robot) utilisation is achieved when the expected tray retrieval-storage cycle time is
approximately equal to the expected pick time per tray.

\cite{hansen18} present a case study relating to the implementation and operation of a RMFS  for order picking of consumer goods within an e-commerce context. They use 
semi-structured interviews with selected personnel from both the provider of the RMFS and the operator of the RMFS (a third-party logistics provider).
 They note that interviewees stressed that the productivity of the system was affected by
the~\enquote{hit rate}, the number of
successive picks that could be made from an inventory pod at one
picker before the pod was moved on.

\subsection{Design optimisation}

\cite{yuan17}  consider a RMFS and use queueing theory to examine the differences between each picker having dedicated robots and all pickers sharing robots. Simulation was used to validate their queueing theory approach. They calculate the optimal number and velocity of warehouse robots in order to achieve effective 
operation in terms of the total throughput time, both with and without congestion.

\subsection{Operations planning and control}

\cite{boysen17a} consider the problem of a single picker with a given set of orders to be picked from a given set of racks. In such  situations decisions need to be made as to how the orders are sequenced for consideration by the picker, and too how racks are to be sequenced for presentation to the picker, given that there is a constraint on the number of orders that can be processed in parallel. They present a mixed-integer programming model for the problem based upon the use of time slots, where a time slot comprises the time interval where a certain 
subset of orders is being processed from a certain rack that has been presented to the picker. They discuss the computational complexity of the problem. Heuristic algorithms based upon simulated annealing are given as well as computational results.
In their paper they do not explicitly  consider the number of units of each product stored in a  rack, rather they assume that if a rack contains a product then it has sufficient of that product to satisfy all orders requiring that product. Rack revisits (so the same rack being presented to the picker more than once in the sequence of rack presentations) are allowed.
They report that, for the problems they examined, optimising order and rack sequencing has the potential to more than halve the number of robots required to timely supply a picker compared to a simple rule-based approach that is often applied in the real-world. 

\cite{li17} consider the situation where there is a single picker and formulated a zero-one integer program for the problem of deciding the racks to be allocated to the picker based upon minimising total rack travel distance (whilst supplying all orders). They also present a three stage heuristic for the problem. Rack sequencing is not considered in their work. 
Computational results are given for single picker problems using both their integer programming and heuristic approach.

\cite{zou17} propose allocating robots to pickers 
based on considering picker handling speeds and present a neighbourhood search algorithm to find such an allocation. 
They use a semi-open queueing network  approach which was validated using simulation to examine the performance of varying robot to pickers rules. 
They also consider the sizing of the rectangular storage areas on the shelves in each mobile rack.

\cite{xiang18} consider the problem of the products to store in each rack and formulate the problem as an  integer program maximising the total similarity of products stored in the same rack. They also consider the problem of batching orders together so that all orders are assigned to some batch, and too racks are assigned to batches, which they formulate as a zero-one integer program. For this order batching problem they present a heuristic to generate an initial solution as well as a variable neighbourhood search heuristic. 
Their order batching approach allows
racks to be allocated to more than one batch,
 so does not address the issue of
sequencing  batches for allocation to pickers.
  Computational results are given.

\subsection{Comment}
Table~\ref{table_lit} supports the insight from the two recent literature reviews mentioned above~\citep{azadeh2019, boysen19} that, in terms of the volume of work presented in the scientific literature, much remains to be done with regard to  robotic mobile fulfilment systems.

With regard to the contribution made by this paper to the literature our work fits within the operations and control section of the structure suggested 
by~\cite{azadeh2019}.

\section{Formulation - order and rack allocation}
\label{sec:formjeb}

In this section we first present our formulation for simultaneously 
 allocating orders and racks  to pickers. We then present a number of additional constraints which can be added to the formulation and discuss the complexity of the problem.

Our formulation assumes that
we are dealing with a fixed set of orders and we need to make decisions as to the orders and racks to allocate to each picker so as to enable the picking of the orders  allocated. In other words we are not attempting in our formulation to deal with the situation where previous decisions need to be revisited/revised as new orders appear, rather we need to make decisions with  regard to a fixed set of orders. We discuss in a later section below how to adapt the approach given here for the situation where new orders appear.

For ease of presentation we shall initially assume here that:
\begin{compactitem}
\item each rack (if used) can only be allocated to one picker, and;
\item each mobile rack has only one face from which products can be picked.
\end{compactitem}
However we  indicate later in this section  how both these  assumptions can be removed.

Suppose that we have  $N$ products  that can be ordered by customers.  We have $O$ orders to be allocated to pickers where each order $o$ requires $q_{io}$ units of product $i$. We have $R$ mobile racks which can be allocated (moved)  to pickers. There are $P$ pickers, where picker $p$   is to be allocated  $C_p$ orders, where $\sum_{p=1}^P C_p \leq O$. Rack $r$ contains $s_{ir}$ units of product $i$. Without loss of generality assume that the pickers are indexed in decreasing $C_p$ order (so $C_p \geq C_{p+1}~p=1,\ldots,(P-1)$)

Our formulation for allocating orders and racks to pickers involves the following zero-one decision variables:
\begin{itemize}
\item $x_{op}=1$ if order $o$ is allocated to picker $p$, zero otherwise
\item $u_r=1$ if  rack $r$ is used, so allocated to some picker, zero otherwise
\item $y_{rp}=1$ if  rack $r$ is allocated to picker $p$, zero otherwise
\end{itemize}

Our  formulation is:
\begin{optprog}
\optaction[]{min} &  \objective{\sum_{r=1}^R  u_r \label{eq1} }\\

subject to: & \sum_{o=1}^O x_{op} & = & C_p & \forall p \in \{1,\ldots,P\}
\label{eq2} \\

& \sum_{p=1}^P x_{op} & \leq & 1 & \forall o \in \{1,\ldots,O\}
\label{eq2a} \\

& \sum_{p=1}^P y_{rp} & = & u_r & \forall r \in \{1,\ldots,R\}
\label{eq3} \\

& \sum_{r=1}^R s_{ir}y_{rp} & \geq & \sum_{o=1}^O q_{io}x_{op} & 
\forall i \in \{1,\ldots,N\};  p \in \{1,\ldots,P\}:~\sum_{o=1}^O q_{io} \geq 1
 \label{eq4} \\

& x_{op} & \in & \{0,1\} & \forall o \in \{1,\ldots,O\};  p \in \{1,\ldots,P\}
\label{eq5} \\
& u_r & \in & \{0,1\} &\forall r \in \{1,\ldots,R\}
\label{eq5a} \\
& y_{rp} & \in & \{0,1\} & \forall r \in \{1,\ldots,R\}; p \in \{1,\ldots,P\}
\label{eq6} 
\end{optprog}

In our objective, Equation~(\ref{eq1}), we minimise the total number of racks allocated to pickers. As discussed 
in~\cite{boysen17a, hansen18} this objective contributes to less movement of racks from the storage area to the picking area, and encourages the picking of multiple products for different orders from the same rack.
Equation~(\ref{eq2}) ensures that we allocate orders to each picker so as to use each picker to capacity. 
Equation~(\ref{eq2a}) ensures that an order can be allocated to at most one picker. Equation~(\ref{eq3}) ensures that each rack is either allocated to one picker, or not used at all.

Equation~(\ref{eq4}) is the product supply constraint and ensures that, for each product $i$ and each picker $p$, the number of units of that product available from the racks assigned to that picker (so  $ \sum_{r=1}^R s_{ir}y_{rp}$) is sufficient to meet the required number of units of product $i$ at picker $p$ given the orders allocated to the picker (so  $ \sum_{o=1}^O q_{io}x_{op}$). This constraint is obviously only applicable if some order requires product $i$, so $\sum_{o=1}^O q_{io} \geq 1$.
Hence Equation~(\ref{eq4})  ensures that appropriate racks are assigned to picker $p$ so as to enable  the picking of all of the products associated with the orders assigned to the picker. Equations~(\ref{eq5})-(\ref{eq6}) are the integrality constraints.

With regard to the formulation presented above note especially here that \emph{\textbf{multiple orders can be picked from the same rack by a picker}}. In other words we are not restricted to just picking a single order from each rack chosen to be allocated to some picker.

In terms of the formulation presented above, Equations~(\ref{eq1})-(\ref{eq6}), note that the $[u_r]$ variables relating to the usage (or not) of rack $r$ could potentially be deleted from the formulation. This would necessitate changing the objective, Equation~(\ref{eq1}), to minimise $\sum_{r=1}^R \sum_{p=1}^P y_{rp}$ and changing Equation~(\ref{eq3}) to $ \sum_{p=1}^P y_{rp} \leq 1$. However we have introduced the $[u_r]$ variables into our formulation because their presence gives the solver we used, Cplex~\citep{cplex128}, an opportunity to choose such a variable for branching, thereby potentially identifying explicitly that a particular rack $r$ is not used in just one branching operation. If such a  variable is not present in the formulation the solver (potentially) has to branch on $P$ $y_{rp}$ variables before identifying that a particular rack $r$ is not used. Limited computational experience (not reported below for space reasons) supported our use of these variables in the formulation presented above.

\subsection{Other constraints}

There are a number of other constraints which are relevant to the formulation presented above and these are discussed below. The first  constraint presented below helps reduce the size of the problem we need to solve. The other constraints presented below relate to extending the formulation to deal with additional restrictions.

\subsubsection{Required products}
Given the $O$ orders then we know the products needed. If a rack contains  none of the required products then clearly it is redundant and will never be assigned to a picker, and so can be removed from the problem. The constraint that ensures this is:
\begin{equation}
u_r = 0~~\forall r \in \{1,\ldots,R\}:~\sum_{o=1}^O \sum_{i=1}^N s_{ir}q_{io}=0
\label{eq7}
\end{equation}
In Equation~(\ref{eq7}) the term $\sum_{o=1}^O \sum_{i=1}^N s_{ir}q_{io}$ is zero if and only if rack $r$ does not contain any of a required product.

\subsubsection{Rack faces}
The formulation presented above assumes that the mobile rack has only one face from which to pick products. In some applications the rack has different faces (typically two opposite faces, or all four faces) from which products can be picked, with the possibility of picking different products from each face. 
However, irrespective of the number of faces the rack has, the picker can only pick from a single face of the rack presented to them.
Our formulation can be easily modified to deal with such situations.  We create 
\enquote{artificial} copies of each rack, one copy for each face that can be used. 

To illustrate this suppose that we have just one of the $R$ racks,  rack $r$, where all four faces can be used. Arbitrarily associate one of the faces with rack $r$ and create three new racks which are artificial face copies of rack $r$ (one for each of the other three faces). Let these copies be the racks $R+1$, $R+2$ and $R+3$ so that $F_r$, the set of racks which are the faces of rack $r$, is given by $F_r =[r, R+1,R+2,R+3]$. Set $R \leftarrow R+3$. For each rack $j \in F_r$ the values for rack product supply ($s_{ij}$) are set equal to the values that apply to the face of rack $r$ which is to be presented to the picker.
Then we add the constraint:
\begin{equation}
\sum_{j \in F_r} u_j \leq 1 \label{eq9}
\end{equation}
Equation~(\ref{eq9}) ensures that for rack $r$ at most one of the racks in the set $F_r$ (i.e.~at most one rack face) can be used.

\subsubsection{Multiple picker allocation for racks}

Above we assumed  that each rack (if used) can only be allocated to one picker. If we remove this assumption, so allow each rack to be allocated to two (or more) pickers, then this can be easily done. Let $\pi_{irp}$,  integer $\geq0$, be the number of units of product $i$ associated with rack $r$ that are used at picker $p$. Here we need only be concerned with $\pi_{irp}$ if $ s_{ir} > 0$, i.e.~if rack $r$ contains some of product $i$. 
Then we remove Equations~(\ref{eq3}),(\ref{eq4})
from the formulation and introduce:
\begin{optprog}
& \sum_{r=1}^R \pi_{irp}  & \geq & \sum_{o=1}^O q_{io}x_{op} &
 \forall i \in \{1,\ldots,N\};  p \in \{1,\ldots,P\}:~\sum_{o=1}^O q_{io} \geq 1
\label{jebex1} \\


 &  \pi_{irp}  & \leq & s_{ir} y_{rp} &
\forall i \in \{1,\ldots,N\}; r \in \{1,\ldots,R\}; p \in \{1,\ldots,P\}
\label{jebex11} \\
& \sum_{p=1}^P \pi_{irp}  & \leq & s_{ir} & 
\forall i \in \{1,\ldots,N\}; r \in \{1,\ldots,R\} \label{jebex2} \\
& y_{rp}  & \leq & u_r & 
\forall r \in \{1,\ldots,R\}; p \in \{1,\ldots,P\}\label{jebex3} 
\end{optprog}
Equation~(\ref{jebex1}) is the product supply constraint and ensures that, for each product $i$ and each picker $p$, the number of units of that product supplied from the racks assigned to that picker (so  $ \sum_{r=1}^R \pi_{irp}$) is sufficient to meet the required number of units of product $i$ at picker $p$ given the orders allocated to the picker (so  $ \sum_{o=1}^O q_{io}x_{op}$). This constraint is analogous to Equation~(\ref{eq4}) presented previously above.
Equation~(\ref{jebex11}) ensures that if a rack is not allocated to a picker (i.e.~$y_{rp}=0$) then no product can be supplied from that rack to the picker (i.e.~we have $\pi_{irp}=0~i=1,\ldots,N$).
Equation~(\ref{jebex2}) ensures that over all pickers $p$ we do not use more units of product $i$ than are available on  rack $r$.
Equation~(\ref{jebex3}) ensures that $u_r$ is one if rack $r$ is used by some picker $p$, and conversely that $y_{rp}$ is zero if $u_r$ is zero.

In Equations~(\ref{jebex1})-(\ref{jebex3}) above we have explicitly defined the variables $\pi_{irp}$,  the number of units of product $i$ associated with rack $r$ that are used at picker $p$, as integer. This corresponds to the real-world situation modelled, where the number of units of any product picked can only be integer. Note here that relaxing this integrality assumption could lead to fractional solutions, i.e.~in any numeric solution the variables $\pi_{irp}$ are not naturally integer. As an illustration of this suppose we have an order requiring just one unit of a particular product and that this product is available on two racks which, because of other orders, are both allocated to the picker.
If we allow fractional $\pi_{irp}$ variables then to satisfy Equations~(\ref{jebex1})-(\ref{jebex3})
it would be possible to take 0.5 (say) of the product from one rack,  0.5 from the other. 
Of course if it is judged desirable (e.g.~for computational reasons)  to relax the integrality requirement on the $\pi_{irp}$ variables so that they become non-negative continuous variables then some appropriate rounding procedure can be used to generate a feasible integer solution from any fractional  $\pi_{irp}$ values.

As we allow each rack to be allocated to two (or more) pickers the objective function can either remain as minimise $\sum_{r=1}^R  u_r$ or be amended to account for multiple rack usage, so become minimise $\sum_{r=1}^R  \sum_{p=1}^P y_{rp}$.
On a technical note here if we allow multiple picker allocation for racks then in the individual picker rack sequencing formulation given below we, to respect rack inventory position constraints, need to use the values $\pi_{irp}$ as being the inventory of product $i$ on rack $r$ when considering rack sequencing at picker $p$.

\subsubsection{Picker workload}

With respect to Equation~(\ref{eq2}), which specifies  the number of orders allocated to each picker, we did consider allowing the number of orders allocated to a picker, which is a proxy for picker workload, to be decided by the optimisation. For example this could be done by changing Equation~(\ref{eq2}) to 
$\sum_{o=1}^O x_{op} \leq  C_p~\forall p \in \{1,\ldots,P\}$, so that we maintain an upper limit on the number of orders allocated to each picker, and 
introducing a constraint such as:
\begin{equation}
\sum_{o=1}^O \sum_{p=1}^P x_{op} \geq C^* \label{jebwork}
\end{equation}
to ensure that we process at least $C^*$ orders in total. 

However our view was that, since this might result in unbalanced picker workloads, it was better to simply specify the number of orders allocated to each picker. Varying the formulation, for example
to impose Equation~(\ref{jebwork}) as well as introduce lower and upper limits on the number of orders allocated to each picker; or
to introduce some measure of workload per order to more explicitly account for workload per picker,  
is clearly possible.

\subsection{Complexity}

\begin{theorem}
The optimisation version of the order and rack allocation problem, which aims to find the minimum number of racks required to allocate $\sum_{p = 1}^P C_p \leq O$ orders to pickers, is NP-hard in the strong sense.
\label{th1}
\end{theorem}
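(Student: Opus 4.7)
My plan is to prove strong NP-hardness by a polynomial reduction from Minimum Set Cover, which is well known to be strongly NP-hard (its inputs are purely combinatorial 0/1 incidence data, so no numerical blow-up can occur). Given a set cover instance with universe $U = \{u_1, \ldots, u_n\}$ and subsets $S_1, \ldots, S_m \subseteq U$, I would construct the allocation instance with $N = n$ products, $R = m$ racks, a single picker $P = 1$ with capacity $C_1 = 1$, and a single order $O = 1$ demanding one unit of every product (i.e.~$q_{i,1} = 1$ for all $i$). Each rack $r$ is stocked according to its subset: $s_{ir} = 1$ when $u_i \in S_r$ and $s_{ir} = 0$ otherwise. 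The side condition $\sum_{p} C_p \leq O$ holds with equality, so the construction is a legitimate instance.

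Next I would verify that the formulation in Equations~(\ref{eq1})--(\ref{eq6}) collapses to the set cover problem on this instance. Equations~(\ref{eq2}) and~(\ref{eq2a}) force $x_{1,1} = 1$, Equation~(\ref{eq3}) ties each $u_r$ to $y_{r,1}$, and the product-supply constraint~(\ref{eq4}) becomes the covering inequality $\sum_{r : u_i \in S_r} y_{r,1} \geq 1$ for every product $i$. Minimising $\sum_r u_r$ is then equivalent to minimising the number of chosen subsets whose union equals $U$, giving a one-to-one correspondence between feasible solutions of the two problems and preserving the optimal value exactly. The reduction uses only $\{0,1\}$ coefficients and grows polynomially in $n + m$, so it preserves strong NP-hardness. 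A polynomial-time algorithm for our optimisation problem would therefore immediately yield one for Set Cover, which is impossible unless $\mathrm{P} = \mathrm{NP}$.

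The main obstacle I anticipate is not the reduction per se but ensuring the construction respects the structural preconditions of our formulation without inadvertently trivialising the instance or smuggling in large integers. Specifically, I need to check that (i) the redundancy elimination of Equation~(\ref{eq7}) does not discard any rack whose subset is non-empty, (ii) the objective and integrality constraints exactly match the set cover objective, and (iii) the single-picker single-order setup is not a degenerate case handled outside the general model. Once these routine checks are dispatched, the strong NP-hardness conclusion follows immediately, and the proof can be kept short. If a reviewer preferred a reduction that exploits the distinctive multi-picker structure of the problem, an alternative route would be to reduce from 3-Partition (using pickers as bins, orders as items, and carefully constructed per-order product demands), but the Set Cover reduction above is the most direct and transparent option.
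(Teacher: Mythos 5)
Your reduction is exactly the one the paper uses: a single picker with $C_p=1$, a single order demanding one unit of each universe element, and one rack per subset with $0/1$ stock levels, so that minimising the number of racks is minimising the set cover. The proposal is correct and matches the paper's proof, with only some extra (harmless) verification detail added.
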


\begin{proof}
We can prove this theorem by reference to the minimum set cover problem, whose decision version is one of Karp's 21 
NP-complete problems \citep{garey79,karp1972}. Let $X$ be a finite set and let $\mathcal{F} = \{S_1, \ldots, S_T\}$ be a family of $T$ subsets of $X$, that is, $S_t \subseteq X$, such that $\bigcup_{t = 1}^T S_t = X$. A collection of subsets $C \in \mathcal{F}$ is a {\it cover} of $X$ if $X = \bigcup_{S \in C} S$. The size $|C|$ of a cover $C$ is the number of subsets in $C$. The minimum set cover problem (also known as the minimum cover problem) is to find a subset $C$ with minimum $|C|$.

Consider an instance of the order and rack allocation problem involving just a single order $o$ requiring one unit of a product corresponding to each element $i \in X$, that is, $q_{io} = 1, \;i \in X$. For each subset $S_t \in \mathcal{F}$, introduce a rack $r$ where $s_{ir} = 1$ if $i \in S_t$. Consider also a single picker $p$ with $C_p = 1$, so the picker only has to deal with the single order $o$. 

Then finding the minimum cover  of $\mathcal{F}$ 
is equivalent to finding the minimum number of racks required for $p$ to fully collect $o$. The solution can be transformed back to a solution of the minimum set cover problem by mapping each rack $r$ where $y_{rp} = 1$ to its corresponding subset $S_t$.

The transformation is 
pseudo-polynomial 
and thus the optimisation version of the order and rack allocation problem is NP-hard in the strong sense. 
\end{proof}

\subsection{Rack allocation only}

In the formulation presented above we  simultaneously allocated both orders and racks to multiple pickers. This involves two elements of choice:
\begin{itemize}
\item decide the orders to be allocated to each individual picker
\item decide the racks to be allocated to each individual picker
\end{itemize}
Clearly our formulation deals with both choice elements 
\textbf{\emph{simultaneously}}. It is possible that in particular applications  orders could have been preassigned to pickers, so the element of choice simply reduces to deciding the racks to be allocated to each individual picker. Since one element of choice has been removed then we might reasonable expect that this problem would be (computationally) less challenging than the specific problem  considered in this paper, where we have both elements of choice. Unfortunately, the remaining problem is still NP-hard in the strong sense as the proof of Theorem~\ref{th1}  shows.

Note here that our formulation is easily adapted to deal with the situation where we have just the one choice element which only relates to
 rack allocation. This is because preassignment of orders to pickers simply implies that  all of the variables $x_{op}$ (one if order $o$ is allocated to picker $p$, zero otherwise) have been assigned values. Our formulation 
(Equations~(\ref{eq1})-(\ref{eq7})) 
then reduces to one where we are seeking to minimise the total number of racks used.

\section{Heuristics}
\label{sec:heuristics}

As indicated by the complexity theorem above we are very likely to need to resort to a heuristic solution procedure for the order and rack allocation problem when any optimal procedure, for example linear programming relaxation based tree search, reaches its effective computational limit. In  this section we present two heuristics for the problem of allocating orders and racks to pickers. Both of these heuristics draw directly on the mathematical formulation of the problem presented in the previous section and hence can be classed as 
\emph{\textbf{matheuristics}}~\citep{boschetti2009}. Our heuristics make direct use of Cplex so we also comment as to why we might use these heuristics as compared with simply applying Cplex by itself.

\subsection{Single picker based (SPB) heuristic}

As will be seen in the computational results presented later below solving (to proven optimality) the order and rack allocation problem for just a single picker using standard optimisation software, such as Cplex~\citep{cplex128} which we used, is computationally relatively quick. Based on this observation we developed a heuristic which solves our order and rack allocation formulation optimally for each single picker in a  sequential fashion, until all pickers have been considered. 

As we assumed above that the pickers are indexed in decreasing $C_p$ order (so $C_p \geq C_{p+1}~p=1,\ldots,(P-1)$) we have a natural ordering for the pickers. Our single picker based 
(SPB) heuristic is therefore:
\vspace{-\topsep}
\begin{enumerate}[label=(\alph*), noitemsep]
\item Set $p=1$
\item Solve the order and rack allocation problem for just this single picker $p$ to proven optimality
\item Delete the orders and racks allocated to the picker from the problem 
\item Set $p=p+1$ and if $p\leq P$ go to (b)
\item Solve the order and rack allocation problem for all $P$ pickers simultaneously, but restricting attention to just those racks chosen when the pickers were considered individually
\end{enumerate}
\vspace{-\topsep}
This heuristic considers each picker in turn until all of the pickers have been considered. So after completing steps (a)--(d) above we have a heuristic solution for the problem. However it is possible to (potentially) improve this heuristic solution by performing step (e) above.
This final improvement step makes use of our order and rack allocation formulation to solve the problem, but restricting attention to the subset of racks chosen when the pickers were considered individually. This will mean that, computationally, our formulation need only consider far fewer racks.

In terms of the computational effort required SPB needs to perform $P+1$ optimisations, i.e.~solve the order and rack allocation formulation $P+1$ times.

Note here that if we have just a single picker (so $P=1$) then this heuristic is redundant as it  will give exactly the same solution  as produced by solving our order and rack allocation formulation to proven optimality (step (b) above with $p=1$).

\subsection{Partial integer optimisation (PIO) heuristic}
We also developed a heuristic for the order and rack allocation problem based upon partial integer optimisation, which is a relatively new approach in the literature.

In general terms in a partial integer optimisation (PIO) heuristic we start with a mathematical formulation of the optimisation problem under consideration. From this formulation a subset of the integer variables are declared as integer, with all of the remaining integer variables being declared as continuous. The resulting mixed-integer problem is then solved. The integer variables are then fixed at the values that they have in this solution and the process repeats with a new subset of integer variables being declared as integer. 
A related approach to 
partial integer optimisation 
is kernel search~\citep{angelelli2012, guastaroba2017}.

In brief the idea behind kernel search (for a pure integer program) is to find a set of variables which collectively contribute to a good feasible solution. In kernel search a subset of the variables is first identified, the kernel, with the remaining variables being subdivided into buckets. These buckets are sequenced for order of consideration. An iterative process is then applied by solving (either optimally or heuristically) the integer program  which just involves   the kernel and one bucket. The kernel is then adjusted (e.g.~by adding new variables from the considered bucket that appear promising) and the process repeats with the next bucket.

By contrast a POI heuristic always maintains consideration of all variables (so no subdivision into kernel and buckets). 
Instead, as the phrase \emph{partial integer optimisation} implies some  variables are treated as integer variables, other (integer) variables are relaxed to become continuous variables.
As best as we are aware partial integer optimisation  approaches, which essentially lead directly from a  mathematical  formulation to a heuristic, without the necessity of designing a 
problem-specific metaheuristic, have not been significantly explored in the literature. 

Our partial integer optimisation heuristic for the order and rack allocation problem considers the variables associated with $\tau$ pickers at each iteration as integer variables and decides the values for these integer variables. Other variables are either fixed to integer  values or regarded as continuous variables. Formally this heuristic is:
\vspace{-\topsep}
\begin{enumerate}[label=(\alph*), noitemsep]
\item Set $T \leftarrow \tau$
\item Amend the order and rack allocation formulation,
 Equations~(\ref{eq1})-(\ref{eq7}),
 so that although the order allocation variables ($x_{op}$) and the rack allocation variables ($y_{rp}$) associated with pickers $p=1,\ldots,T$ remain binary variables, the order allocation and rack allocation variables associated with pickers $p=T+1,\ldots,P$ become continuous variables lying between zero and one. The rack usage variables ($u_r$) become continuous variables lying between zero and one.
\item Solve the amended order and rack allocation formulation. In this  solution we will have a (integer variable) feasible  allocation of orders and racks to the first $T$ pickers. Let the solution values for the order and rack allocation variables 
($x_{op}$ and $y_{rp}$)   be $X_{op}$ and $Y_{rp}$.
\item Add the constraints $x_{op} = X_{op}~o=1,\ldots,O~p=1,\ldots,T$ and $y_{rp} = Y_{rp}~r=1,\ldots,R~p=1,\ldots,T$ to the amended formulation.
\item If $T \neq P$ set $T \leftarrow \mbox{min}[T+\tau,P]$  and  go to (c).
\item Solve the order and rack allocation problem for all pickers simultaneously, but restricting attention to just those racks chosen
in step (c) above.
\end{enumerate}
\vspace{-\topsep}
In this heuristic $T$ is the number of pickers for which the order and rack allocation variables will be integer. Given such a solution for $T$ pickers we (at step (d) above) constrain all future solutions to retain these integer values, add a further $\tau$ pickers to $T$ and repeat until all pickers have been considered. Note here that in this heuristic
there is no necessity to declare the rack usage variables $u_{r}$ as integer since from Equation~(\ref{eq3}) they will automatically be integer if the rack allocation variables $y_{rp}$ are integer. 
So after completing steps (a)--(e) above we have a heuristic solution for the problem. Step (f) is the same final improvement step as in the last step of our SPB heuristic above.

The essential difference between this PIO heuristic and our SPB heuristic 
is that in the SPB heuristic we take no account of pickers whose order/rack allocation is yet to be decided  (other  than the single picker under consideration), whereas in our PIO heuristic these other pickers are considered, albeit with their associated variables
being regarded as  continuous variables.

In terms of the computational effort required PIO needs to perform $\lceil P/\tau \rceil+ 1$ optimisations, i.e.~solve the order and rack allocation formulation $\lceil P/\tau \rceil+ 1$  times.

As for our SPB heuristic above this PIO heuristic is redundant  if we have just $\tau$ pickers (so $P=\tau$)  as it  will give exactly the same solution  as produced by solving our order and rack allocation formulation to proven optimality.

\subsection{Using Cplex alone}

The SPB and PIO heuristics given above require the use of an optimisation solver, in the results presented below we used Cplex 12.8~\citep{cplex128}.

As the reader may be aware modern optimisation solvers (such as Cplex) contain inbuilt heuristics to find a heuristic solution to any problem under consideration. These heuristics are applied not only at the root node of the tree, but also throughout the tree search as a search for the optimal solution is made.
The question therefore arises as to why not simply apply Cplex and terminate the search after (for example) a predefined time limit. We would make the following points:
\begin{compactitem}
\item Cplex (by its very nature) processes in a mechanistic manner the mathematical formulation it is given for the problem being solved. 
\item By contrast scientific/creative insight is necessary to  design the varying algorithmic components of 
any  heuristic for the problem being solved.
\item Given the power of modern optimisation software packages it seemed worthwhile in our view \textbf{\emph{to attempt to utilise that power as an algorithmic component}} in any scientifically/creatively designed heuristic.
\end{compactitem}
\noindent
Of course the key discriminator between just applying Cplex alone and applying any other heuristic is the results obtained. In particular the balance between the computation time required and the quality of results obtained. We believe that the results presented below indicate that in terms of this key discriminator our SPB and PIO heuristics are worthwhile as compared to simply applying Cplex alone.

\section{Formulation - picker rack sequencing}
\label{sec:formjebrack}

In this section we first outline the picker rack sequencing problem. This is the problem of sequencing the racks allocated to each picker  
given a (known) allocation of orders and racks to each of $P$ pickers (such as decided by the formulation we presented above).    Here, after order/rack allocation,  each picker is independent and so we focus on the problem of sequencing the racks allocated to a single picker.

We   assume in this section:
\begin{compactitem}
\item  that there are sufficient items of required products on the racks allocated to the picker to supply all of the orders allocated to the picker (since if not then no feasible rack sequence supplying all orders can exist)
\item that a picker needs to assemble a complete order in one of their bins before the bin containing the completed order is put onto the conveyor and carried away for further processing (c.f.~Figure~\ref{fig3}). 
\end{compactitem}
This second assumption seems appropriate in terms of the application we are addressing where (typically) a small number of product items needed to be picked for a customer order. Assembling all the items needed for each customer order in a single bin as they are picked seems more effective than picking items individually and passing these items on for processing and assembly into a customer order at a later stage.

In this section we first  formulate the picker rack sequencing problem assuming that
each mobile rack only visits the picker once (so no rack revisits to the picker are allowed). 
We then indicate how to extend our formulation to allow rack revisits.  We prove that, subject to certain conditions being satisfied, in   generating a feasible rack sequence for a single picker we can  initially neglect  all orders which comprise a single unit of one product since a feasible rack sequence for the remaining orders
can be easily modified to incorporate the neglected orders. 
Finally we discuss how we might integrate the order and rack allocation formulation given previously above with the rack sequencing formulation given in this section.

\subsection{Problem outline}
Any feasible solution to the formulation presented above
(Equations~(\ref{eq1})-(\ref{eq7}))
will give an allocation of orders and racks to pickers. For any picker $p$ the constraints of the problem will ensure that there are sufficient items of required products on the racks allocated to the picker to supply all of the orders allocated to the picker.

Referring to Figure~\ref{fig3} suppose that we have $B$ positions where bins can be placed which the picker can  fill with product items for  orders.
Any order for which the picker has picked at least one product item are~\textbf{\emph{open}} in the sense that there must be a dedicated bin available for that order. Orders where all the required products have been picked are~\textbf{\emph{closed}} in the sense that the bin containing all the required products has been moved to the conveyor for later processing, and a new empty bin positioned for a future order to be opened. 

Now if for picker $p$ the number of orders $C_p$ considered in  deciding the allocation of orders and racks to pickers is such that $C_p \leq B$ then irrespective as to how the racks are sequenced so as to be presented to the picker it will be possible to deal with all of the orders without having to consider which orders are open and which closed. This is because the $B$ bin positions will be sufficient to fill all of the orders allocated to the picker.
However if in solving our formulation 
(Equations~(\ref{eq1})-(\ref{eq7}))
we considered many orders, so that we had $C_p > B$, then the situation is more complex.
The essential reason why we might have $C_p > B$ is that we are adopting a longer time horizon than simply the time horizon associated with the filling of $B$ bins and wish to allocate orders and racks to pickers accordingly.

If $C_p > B$ then the decision problem we face is how to sequence the racks for presentation to the picker, as well as decide the number of units of each required product to take from each rack for orders which are currently open. Implicit in this decision problem is a requirement to decide which orders are open, and which unopened or closed, as each rack is presented to the picker.

To illustrate the importance of the rack sequence if $C_p > B$ suppose that we have one order that requires two products, such that
out of the racks allocated to the picker
one  of these two products is only stored in one rack, the other only stored in a different rack. If these two racks are next to each other in the rack sequence adopted for presentation to the picker then the order can be opened as the first rack is presented to the picker, closed as the next rack is presented to the picker. So the open order just involves  one bin position between the two successive rack presentations. 

However if these two racks were to be sequenced such that one rack is the first rack presented  to the picker, and the other rack is the last rack presented to the picker, 
then the order would be opened with the first rack and remain open until the last rack, thereby effectively occupying a single bin  position over the entire rack sequence. Clearly with just 
$B$ bin positions having one position occupied for the entire rack sequence by just one open order may not be the best use of limited bin position capacity.

In our view the rack sequencing problem at each picker is essentially a feasibility problem. By this we mean that our assumption here is that \textbf{\emph{a primary optimisation has already been performed in terms of choosing which orders and racks are allocated to each picker}}, as in our formulation above. Once orders and racks have been allocated to a picker as a result of a primary optimisation then the scope for further optimisation in terms of rack sequencing at each picker is much less. For example, all allocated racks will be presented to the picker and the picking of the items required for the orders allocated will  take the same time irrespective as to the rack sequence. As such finding a feasible rack sequence such that all orders can be picked using $B$ bin positions is, in our view, the principal concern. 

To assist in finding a feasible rack sequence  we adopt the approach that if there is some order allocated to a rack which is opened and closed by that rack (e.g.~an order just requiring a single unit of one product, or more generally an order for multiple products which is fully satisfied by that rack) then we ensure (via imposing appropriate constraints) that one of the $B$ bins is available to process that order. Such an operational strategy seems appropriate for environments, such as Amazon~\citep{weidinger18c}, where the vast number of orders are for only one or two items.

The optimisation adopted in our formulation above was designed to minimise the number of racks needed, and so encouraged the  picking of multiple products for the same order from the same rack. This therefore means that we can reasonably expect a number of orders which can be fully satisfied by making use of just a single rack. 

Note here that it is only necessary to choose one of the $B$ bins for opening and closing orders from the same rack. This is because once a rack has been first presented to the picker then they can open and close one order from the rack using the chosen bin, move the bin with the closed order to the conveyor for further processing, and then place a new empty bin in the now  vacated bin position for the next order than can be opened and closed from the  rack. This leaves $B-1$ bins available for opening and closing orders that need more than one rack to be satisfied.

\subsection{Formulation}

We commented above that given an allocation of orders and racks to pickers  then in deciding the rack sequence (as well as associated other decisions) we have $P$ independent problems, one for each of the $P$ pickers. In this section we present our rack sequencing formulation. For ease of presentation we shall initially assume here that each mobile rack only visits the picker once (so no rack revisits to the picker are allowed), but indicate later below how this assumption can be removed.

In contrast to earlier work presented in the literature related to mobile rack sequencing \citep{boysen17a} we explicitly consider in our formulation rack inventory positions and decide the number of units of each product allocated to each order from each 
rack.~\cite{boysen17a} did  not explicitly  consider the number of units of each product stored in a  rack, rather they assumed that if a rack contained a product then it had sufficient of that product to satisfy all orders requiring that product. In terms of our notation this means that either $s_{ir}=0$ or 
$s_{ir} = \sum_{o=1}^O q_{io}$.

Let $O^*$ be the set of orders allocated to the picker $p$ under consideration (so $|O^*| = C_p$) and let $R^*$ be the set of racks allocated to the picker, with $K=|R^*|$ being the number of racks allocated to the picker. Let $N^*$ be the set of products associated with the orders ($O^*$) allocated to the picker, so that $N^* = [i~|~ \sum_{o \in O^*} q_{io} \geq 1~i=1,\ldots,N]$.
It is important to note here that we will typically have $K \ll R$, since we are no longer considering all possible racks in the entire storage area, but only those racks that have been allocated to the picker so as  to satisfy the orders assigned to the picker. 
We will assume that $\sum_{o \in O^*} q_{io} \leq \sum_{r \in R^*} s_{ir} ~\forall i \in N^*$ since otherwise it will simply not be possible to supply all orders using the racks allocated.

Let the set of orders $o \in O^*$ for which the order just comprises a single unit of one product be denoted by $O_1^*$. This set can be defined using $O_1^* = [o~|~ \sum_{i \in N^*} q_{io} = 1~ o \in O^*]$, since 
if we have some order $o \in O^*$ for which $\sum_{i \in N^*} q_{io} = 1$ then this can only occur if the order is just for a single unit of one product. Define $\tau(o)$ as that unique product associated with order $o \in O_1^*$.

Our formulation involves the following decision variables:
\begin{itemize}
\item $z_{kr}=1$ if rack $r \in R^*$ is the $k$'th rack in the rack sequence presented to the picker, zero otherwise
\item $\alpha_{ok}=1$ if order $o \in O^*$ is open at some point during the time  when the $k$'th rack in the rack sequence  is presented to the picker, zero otherwise
\item $\beta_{ok}=1$ if order $o \in O^*$ is closed at some point during the time   when the $k$'th rack in the rack sequence is presented to the picker, zero otherwise
\item $\gamma_{iok}$, integer $\geq0$, is the number of units of product $i \in N^*$ associated with order $o\in O^*$ that are taken from the  $k$'th rack in the rack sequence presented to the picker (and placed into the bin associated with that order, which must therefore  be an open order)
\end{itemize}

In the  formulation presented below one bin out of the $B$ available 
is used for orders opened and closed by the same rack. One way to accomplish this would be for one of the $B$ bins to be permanently and solely dedicated to such orders for each and every rack presentation. However this is unnecessary since for some rack presentations we may be able process such orders via 
use of bins that are already empty, or bins that will become empty. Moreover for other rack presentations if there are no orders opened and closed by the rack there would no need for such a bin.
In more detail it is not necessary as rack $k$ is presented to have a single bin available for orders opened and closed by the rack if one or more of the following three conditions applies:
\begin{compactitem}
\item condition 1: the number of open orders passed on from the previous rack (rack $(k-1)$) is less than $B$ (so there is a free bin automatically available as rack $k$ is presented)
\item condition 2: there is some order in the open orders passed from  rack $(k-1)$ to rack $k$ that will be closed at rack $k$ (so the bin associated with this closed order can then be used for orders opened and closed at rack $k$) 
\item condition 3: there is no order that is opened and closed at rack $k$ 
\end{compactitem}
Although (for simplicity of expression) we refer to rack $k$ in these conditions  note that by this we mean the $k$'th rack in the rack sequence presented to the picker.

Hence introduce variables $U_k,~k=1,\ldots,K$ where $U_k=1$ if when rack $k$ is presented we can make use of all $B$ bins (so one or more of these three conditions applies and we have no need for a bin reserved for orders opened and closed by the same rack), zero otherwise. Let $W_{ik}=1$ if when rack $k$ is presented  condition $i~(i=1,2,3)$  applies, zero otherwise. We also need additional variables:
\begin{compactitem}
 \item $w_{ok},~o \in O^* \setminus O_1^*,~k=2,\ldots,(K-1)$ where $w_{ok}=1$ if order $o$ is an open order passed from  rack $(k-1)$ to rack $k$ that is closed at rack $k$, zero otherwise. \item $v_{ok},~o \in O^*,~k=2,\ldots,(K-1)$ where $v_{ok}=1$ if order $o$ is an order that is opened and closed at rack $k$, zero otherwise.
\end{compactitem}
\noindent The constraints in our formulation of the problem of picker rack sequencing are as follows:
\begin{optprog}
& \sum_{k=1}^K z_{kr} & = & 1 & \forall r \in R^* \label{req1} \\

& \sum_{r \in R^*} z_{kr} & = & 1 & \forall k \in \{1,\ldots,K\}
\label{req2} \\

& \sum_{o \in O^*} \gamma_{iok} & \leq & \sum_{r \in R^*} s_{ir}z_{kr} & \forall i \in N^*; k \in \{1,\ldots,K\}\label{req3} \\

& \sum_{k=1}^K \gamma_{iok} & = & q_{io} &  \forall i \in N^*; o \in O^* \label{req4} \\

& \gamma_{iok} & \leq & q_{io} \alpha_{ok} & \forall i \in N^*; o \in O^*; k \in \{1,\ldots,K\}\label{req5} \\

& \sum_{o \in O^*} (\alpha_{ok}-\beta_{ok}) & \leq & B-1 + U_k  & \forall k \in \{1,\ldots,K\}
 \label{fin1} \\

& \alpha_{ok} & = & \beta_{ok} & \forall o \in O_1^*; k \in \{1,\ldots,K\}
\label{req6a} \\

& \gamma_{\tau(o),o,k} & = & \alpha_{ok} &  \forall o \in O_1^*;  k \in \{1,\ldots,K\}
\label{req6b} \\


& \sum_{k=1}^K \beta_{ok} & = & 1  &  \forall o \in O^*  \label{req7} \\

& \beta_{o1} & \geq & 1 -  \sum_{i \in N^*} \sum_{k=2}^K \gamma_{iok} &  \forall o \in O^* \setminus O_1^*  \label{req7a} 
\end{optprog}
%
%
\newpage
\begin{optprog}

& \beta_{ok} & \geq & 1 - \sum_{i \in N^*} \sum_{n=k+1}^K \gamma_{ion} -\sum_{n=1}^{k-1}\beta_{on} &  \forall o \in O^* \setminus O_1^*; 
k \in \{2,\ldots,K-1\}
\label{req7b} \\

& \alpha_{ok} &\leq & \sum_{i \in N^*} \sum_{n=1}^k \gamma_{ion}  &  
\forall o \in O^* \setminus O_1^*; k \in \{1,\ldots,K\}
\label{req8a} \\

& \alpha_{ok} & \leq & 1-\sum_{n=1}^{k-1} \beta_{on}  &  \forall o \in O^* \setminus O_1^*;
k \in \{2,\ldots,K\}
\label{req9} \\

& \alpha_{o,k+1} &\geq & \alpha_{ok} - \beta_{ok}  &  
\forall o \in O^* \setminus O_1^*;
k \in \{1,\ldots,K-1\}
\label{req8} \\

& U_k & \geq & W_{ik}  & \forall k \in \{2,\ldots,K-1\}; i \in \{1,2,3\}
 \label{fin1a} \\

& U_k & \leq & \sum_{i=1}^3 W_{ik} & \forall k \in \{2,\ldots,K-1\} \label{fin1b} \\

& W_{1k} & \leq & B - \sum_{o \in O^*} (\alpha_{o,k-1}-\beta_{o,k-1}) 
& \forall k \in \{2,\ldots,K-1\}
\label{fin1c} \\

& w_{ok}  & \geq & \alpha_{o,k-1} + \beta_{ok} - 1  &  
\forall o \in O^* \setminus O_1^*;
k \in \{2,\ldots,K-1\}
\label{fin3} \\

& w_{ok}  & \leq & \alpha_{o,k-1} &  
\forall o \in O^* \setminus O_1^*;
k \in \{2,\ldots,K-1\}
\label{fin4} \\

& w_{ok}  & \leq & \beta_{ok}  &  
\forall o \in O^* \setminus O_1^*;
k \in \{2,\ldots,K-1\}
\label{fin4a} \\

& W_{2k}  & \geq & w_{ok} &  
\forall o \in O^* \setminus O_1^*;
k \in \{2,\ldots,K-1\}
\label{fin5} \\

& W_{2k}  & \leq & \sum_{o \in O^* \setminus O_1^*} w_{ok}  &  
k \in \{2,\ldots,K-1\}
\label{fin6}\\ 

& v_{ok}  & \geq &   \beta_{ok} - \alpha_{o,k-1}  &  
\forall o \in O^*;
k \in \{2,\ldots,K-1\}
\label{fin7} \\

& v_{ok}  & \leq & 1 - \alpha_{o,k-1}   &  
\forall o \in O^*;
k \in \{2,\ldots,K-1\}
\label{fin8} \\

& v_{ok}  & \leq & \beta_{ok}  &  
\forall o \in O^*;
k \in \{2,\ldots,K-1\}
\label{fin8a} \\

& W_{3k}  & \geq & 1- \sum_{o \in O^*} v_{ok} &  
k \in \{2,\ldots,K-1\} 
\label{fin9} \\

& W_{3k}  & \leq & 1- v_{ok} &  
\forall o \in O^*; k \in \{2,\ldots,K-1\} 
\label{fin10} \\

& U_k & = & 1 & k \in \{1,K\} \label{fin0} \\

& z_{kr} & \in & \{0,1\} & \forall k \in \{1,\ldots,K\};
r \in R^* \label{req10} \\
& \alpha_{ok},\beta_{ok} & \in & \{0,1\} & \forall o \in O^*; k \in \{1,\ldots,K\}
\label{req11} \\
& \gamma_{iok} & \geq 0, & \mbox{integer} & \forall i \in N^*; o \in O^*; k \in \{1,\ldots,K\} 
\label{req12} \\

& U_k & \in & \{0,1\} & \forall k \in \{1,\ldots,K\} \label{jebi1} \\

& W_{ik} & \in & \{0,1\} & \forall i \in \{1,2,3\};  k \in \{2,\ldots,(K-1)\} \label{jebi2} \\

 & w_{ok} &  \in & \{0,1\} & \forall o \in O^* \setminus O_1^*;
k \in \{2,\ldots,(K-1)\} \label{jebi3} \\

& v_{ok} & \in & \{0,1\} &
\forall o \in O^*; k \in \{2,\ldots,(K-1)\} \label{jebi4} 
\end{optprog}


Given that we have $K$ racks then these racks need to be sequenced  in some order to be presented to the picker, and this is addressed using 
Equation~(\ref{req1}) and  Equation~(\ref{req2}).
 Equation~(\ref{req1}) ensures that each rack is only assigned to one position in the sequence. Equation~(\ref{req2}) ensures that one rack is assigned to each of the positions $k$ in the sequence for presentation to the picker, $k=1,\ldots,K$. 

Equations~(\ref{req3})-(\ref{req5}) deal with the supply of product (from the racks presented) for the orders considered.
Equation~(\ref{req3}) ensures that the total number of units of product $i$ allocated to orders from the $k$'th rack presented does not exceed the supply of  product on the rack. Equation~(\ref{req4}) ensures that each order receives the required number of units of each product. 
Equation~(\ref{req5}) ensures that no product can be taken from the $k$'th presented rack for an order that  is not open. Note here that we do not impose the condition that if an order is open (so $\alpha_{ok}=1$) there must be some product supplied from the rack (so $\gamma_{iok} \geq 1$ for some $i \in N^*$). It is entirely possible, for orders that are open over a number of successive rack presentations, that one particular rack does not supply any product to an open order (that order remaining open as it needs product from a subsequent rack in the sequence).

Equation~(\ref{fin1}) ensures that the number of open orders associated with the $k$'th presented rack never exceeds the number of available  bin positions, which is  $B{-}1$ if 
$U_k$=0, $B$ if 
$U_k$=1. In other words we have a single bin reserved for orders opened and closed by the rack if necessary (i.e.~if $U_k$=0).
Note here that any order $o$ which is only opened as the 
$k$'th rack is presented, but also closed with that rack, will have $\alpha_{ok}=\beta_{ok}=1$ and hence not contribute to the left-hand side of  Equation~(\ref{fin1}). Orders with $\alpha_{ok}=\beta_{ok}=1$ will  include any order
$o \in O_1^*$, which by definition can be dealt with by the same rack. However (for example) an order $o \in O^* \setminus 
O_1^*$ requiring two or more different products, but where all these products are supplied by rack $k$, will also have $\alpha_{ok} = \beta_{ok} = 1$.

Equation~(\ref{req6a}) deals with orders $o \in O_1^*$ where we can guarantee that they are opened and closed with the same rack. In that equation if we have some order $o \in O_1^*$, so $\sum_{i \in N^*} q_{io} = 1$,
then the order is just for a single unit of one product. Such an order must be satisfied by just a single presented rack (by definition, since the order is just for a single unit of one product), and so that order can be opened and closed by the same rack using the single bin reserved for orders opened and closed by the same rack. This implies that $\alpha_{ok}  =  \beta_{ok}~ k=1,\ldots,K$ for that order. Note here that if $\sum_{i \in N^*} q_{io} \geq 2$ for some order $i \in O^*$ then even if there is just one product associated with this order the units required ($\geq 2$) may come from more than one rack and so we cannot guarantee use of the reserved bin.

Equation~(\ref{req6b}) deals with the same situation as Equation~(\ref{req6a}). In that equation product $\tau(o)$ is the unique product associated with an order $o \in O_1^*$ for which we only need  a single unit of one product.
For such orders we will have $\gamma_{\tau(o),o,k}=0$ for all racks  except for the single rack chosen to pick the single unit of that product, where we will have $\gamma_{\tau(o),o,k}=1$. From Equation~(\ref{req6a}) the single rack from which we supply the product will be the rack $k$ with $\alpha_{ok}=\beta_{ok}=1$. 
Hence setting $\gamma_{\tau(o),o,k}=\alpha_{ok}$ is valid.

Equation~(\ref{req7}) ensures that each order is closed once. 
Equations~(\ref{req7a})-(\ref{req8}) apply for orders $ o \in O^* \setminus O_1^*$ where we cannot guarantee that the order is opened and closed using the same rack.

Equations~(\ref{req7a})-(\ref{req7b}) deal with order closure
for orders $ o \in O^* \setminus O_1^*$.
Equation~(\ref{req7a}) ensures that an order is closed when the first rack in the sequence is presented if no product items for that order are picked in any future rack. Equation~(\ref{req7b}) ensures that an order is closed when the $k$'th rack in the sequence is presented if no product items for that order are picked in any future rack and the order has not already been closed.

Equations~(\ref{req8a})-(\ref{req8}) deal with whether orders are open or not
for orders $ o \in O^* \setminus O_1^*$. Equation~(\ref{req8a}) ensures that if no units of any product associated with an order have yet been picked then the order cannot be open.
Equation~(\ref{req9}) ensures that once an order has been closed it cannot be opened again. 
Equation~(\ref{req8}), in conjunction with Equation~(\ref{req9}),
 ensures that once an order has been opened (so $\alpha_{ok}=1$) then it stays open until it is closed. 

To illustrate  Equation~(\ref{req8}) 
suppose that we first open some order $o$ when the second rack is presented (so $\alpha_{o2}=1$). If that order is not closed by that rack then Equation~(\ref{req8}) will be $\alpha_{o3} \geq \alpha_{o2}$, which as  $\alpha_{o3}$ is a zero-one variable will mean that $\alpha_{o3}=1$, so the order is open as the third rack is presented. Suppose that the order is closed with this third rack, so $\beta_{o3}=1$. Then Equation~(\ref{req8}) will be $\alpha_{o4} \geq \alpha_{o3} - \beta_{o3}$, i.e.~$\alpha_{o4} \geq 0$. But Equation~(\ref{req9}) will be $  \alpha_{o4}  \leq  1-\sum_{n=1}^{3} \beta_{on}$, and as $\beta_{o3}=1$ this means that $  \alpha_{o4}  \leq  0$, so $  \alpha_{o4}=0$ as required as the order has been closed.

In order to explain Equations~(\ref{fin1a})-(\ref{fin0}) recall here that, as stated above,
it is not necessary as rack $k$ is presented to have a single bin available for orders opened and closed by the rack if one or more of the following three conditions applies:
\begin{compactitem}
\item condition 1: the number of open orders passed on from the previous rack (rack $(k-1)$) is less than $B$ (so there is a free bin automatically available as rack $k$ is presented)
\item condition 2: there is some order in the open orders passed from  rack $(k-1)$ to rack $k$ that will be closed at rack $k$ (so the bin associated with this closed order can then be used for orders opened and closed at rack $k$) 
\item condition 3: there is no order that is opened and closed at rack $k$ 
\end{compactitem}
\noindent Mathematically this first condition is $\sum_{o \in O^*} (\alpha_{o,k-1}-\beta_{o,k-1}) < B $ (equivalently $\sum_{o \in O^*} (\alpha_{o,k-1}-\beta_{o,k-1}) \leq B-1$) and this second condition is that there exists some order $ o \in O^* \setminus O_1^*$ such that $\alpha_{o,k-1} =  \alpha_{ok} = \beta_{ok} =1$. This third condition is that there is no order $ o \in O^*$ such that $\alpha_{o,k-1} = 0$ and  $\alpha_{ok} = \beta_{ok} =1$.

However note here that $\beta_{ok} =1$ automatically implies that $\alpha_{ok} =1$. For orders  $ o \in O_1^*$ this follows directly from Equation~(\ref{req6a}). For orders $ o \in O^* \setminus O_1^*$ Equations~(\ref{req7a}),(\ref{req7b}) imply that $\beta_{ok} =1$ for the last rack $k$ for which product is picked for order $o$. Since from Equation~(\ref{req5}) no product for order $o$ can be picked at rack $k$ unless  $\alpha_{ok} =1$ this automatically implies that $\alpha_{ok} =1$. Hence the second condition can be simplified to: there exists some order $ o \in O^* \setminus O_1^*$ such that $\alpha_{o,k-1} = \beta_{ok} =1$.  The third condition can be simplified to: there is no order $ o \in O^*$ such that $\alpha_{o,k-1} = 0$ and  $ \beta_{ok} =1$.

Equations~(\ref{fin1a}),(\ref{fin1b}) ensure that $U_k$ is  one if one or more of the three conditions is satisfied (zero otherwise).
Equation~(\ref{fin1c}) ensures that $W_{1k}=0$ if there are $B$ open orders passed to rack $k$. There is no requirement to have a constraint forcing $W_{1k}$ to one because if we have $\sum_{o \in O^*} (\alpha_{o,k-1}-\beta_{o,k-1}) < B$ we have that Equation~(\ref{fin1c}) becomes inactive and since $W_{1k}$ is a zero-one variable it will be assigned the value one if it is worthwhile so to do.
 
Equations~(\ref{fin3})-(\ref{fin4a}) ensure that $w_{ok}=1$ if and only if  $\alpha_{o,k-1} = \beta_{ok}=1$, so order $o$ is open at the 
$(k-1)$'th presented rack, passed to the $k$'th rack and closed with that rack.
Equations~(\ref{fin5}),(\ref{fin6}) ensure that $W_{2k}=1$ if there is at least one order $o$ with $w_{ok}=1$, zero otherwise. 

Equations~(\ref{fin7})-(\ref{fin8a}) ensure that $v_{ok}=1$ if and only if  $\alpha_{o,k-1} = 0$ and $\beta_{ok}=1$, so order $o$ is not open at the 
$(k-1)$'th presented rack, being opened and closed at the  $k$'th rack.
Equations~(\ref{fin9}),(\ref{fin10}) ensure that $W_{3k}=1$ if there is no order $o$ with $v_{ok}=1$, zero otherwise. 

In Equation~(\ref{fin0}) we automatically know that we can use $B$ bins for the first and last rack as for the first rack all bins are empty as it is  presented, and for the last rack all remaining orders will be closed by that rack.
Equations~(\ref{req10})-(\ref{jebi4}) are the integrality conditions. 

In summary here then our picker rack sequencing formulation (henceforth referred to as {\it\textbf{PRSF}}) is Equations~(\ref{req1})-(\ref{jebi4}).

With regard to a number of minor issues relating to PRSF note  that:
\begin{compactitem}
\item Equations~(\ref{req7a}),(\ref{req7b}) could be considered as redundant and removed from the  formulation. However if we were to do so we may end up with a solution where an order is left open (and so continues to occupy a bin) when all of the products for that order have been picked. For example this could occur if the bin so occupied is not needed for another order at any point over future rack presentations. Since this would seem somewhat illogical, after all in practice once an order has been fulfilled it would seem natural to close it immediately and pass the associated picked products on for further processing, we have retained Equations~(\ref{req7a}),(\ref{req7b}) in our formulation. 
A further reason for retaining these equations is that they are associated with  enforcing the logic underlying the three conditions discussed above.

\item Similarly Equation~(\ref{req8a}) is also technically redundant and could be removed from the  formulation. But as its removal means an order could be opened and occupy a bin over a number of successive rack presentations before even one item associated with the order is picked (which again seems somewhat illogical) we have retained Equation~(\ref{req8a}) in our formulation.   

\item 
Equation~(\ref{req12}) could equally well be expressed using two equations, namely $ \gamma_{iok}  \geq 0,  ~\forall i \in N^*~\forall o \in O_1^*~ k=1,\ldots,K $ and $ \gamma_{iok}  \geq 0,~  \mbox{integer}  ~\forall i \in N^*~\forall o \in O^* \setminus 
O_1^*~~ k=1,\ldots,K $. This is because $\gamma_{iok}$ will be naturally integer for $\forall o \in O_1^*$ as a result of Equations~(\ref{req6b}),(\ref{req11}). However we have 
left Equation~(\ref{req12}) as stated above for convenience.
Note here that the $\gamma_{iok}$ integer variables cannot be relaxed to be fractional variables $\forall o \in O^* \setminus O_1^*$ for the same reasons as discussed previously above when we considered the $\pi_{irp}$ variables associated with multiple picker allocation for racks. 
However, as in the discussion above in relation to the $\pi_{irp}$ variables, if it is judged desirable (e.g.~for computational reasons)  to relax the integrality requirement on the $\gamma_{iok}$  variables so that they become non-negative continuous variables then some appropriate rounding procedure can be used to generate a feasible integer solution from any fractional  $\gamma_{iok}$  values.

\item Computationally $w_{ok}$ and $v_{ok}$ can be treated as continuous variables lying between zero and one since the requirement for $\alpha_{ok}$ and $\beta_{ok}$ to be zero-one will make these variables naturally integer. $W_{ik}$ and $U_k$ can also be treated as continuous variables lying between zero and one.
\end{compactitem}

\noindent As commented previously above any (integer) feasible solution to PRSF will constitute a feasible rack sequence (together with picking details $\gamma_{iok}$) such that all orders can be dealt with. Note here however that we cannot guarantee that a feasible solution can be found. This is essentially because the  number of bin positions (here $B$) at a picker was never taken into account in our formulation
(Equations~(\ref{eq1})-(\ref{eq7}))
which gave an allocation of orders and racks to pickers. 

In order to find a feasible solution to PRSF we could propose using a metaheuristic algorithm. However modern optimisation packages, such as Cplex~\citep{cplex128} which we used, contain heuristic options to search for a feasible solution, as well as having the ability to conduct a full tree search for such a solution. For this reason we will use Cplex to find a solution.

PRSF involves approximately $K(K +  4|O^*| - |O^*_1|)$ zero-one variables and $K|N^*||O^*|$ integer variables. With respect to the number of constraints involved then we have a significant contribution from 
 Equation~(\ref{req3}):   $K|N^*|$ constraints;
Equation~(\ref{req4}): $|N^*||O^*|$ constraints
and Equation~(\ref{req5}):  $K|N^*||O^*|$ constraints.

However it is important to note here that typically we might expect many of the orders $o \in O^*$ not to involve all products $i \in N^*$ and any such order will have $q_{io}=0$ meaning that from 
Equation~(\ref{req5}) $K$ $\gamma_{iok}$ variables must automatically be zero and these variables  can be removed from consideration, reducing the size of the problem, both in terms of variables and constraints, that we need to solve.
Note also here  that a reduction in the size of the formulation can be achieved via algebraic substitution  by making use of 
Equations~(\ref{req6a}) and~(\ref{req6b}). The solver we used, 
Cplex~\citep{cplex128}, contains procedures to automatically perform such substitutions, as well as identify any variables or constraints which are redundant.

\subsection{Complexity}


With regard to the complexity of the picker rack sequencing problem~\cite{boysen17a},  by reference to complexity results for the interval scheduling 
problem~(\cite{kiel92, nakajima1982, spieksma92}), showed that the problem of deciding whether there is a feasible order sequence, given a specific  rack sequence, is strongly 
NP-complete even if there is just a single bin. 
In their work they  assume that if a rack contains a product then it has sufficient of that product to satisfy all orders requiring that product and too assume that rack revisits are allowed.

\cite{fusser19}, by reference to complexity results for the Hamiltonian path problem, showed that when
there is a single picker (and the objective is to minimise the number of racks used) the problem is strongly NP-hard  even if there is just a single bin. They also present a proof, making reference to complexity results for the offline sorting buffers problem~(\cite{asahiro2012,chan2012}), that the problem is strongly NP-hard even if the order sequence is given and each order involves just a single product.
In their work they assume that each rack holds just a single product and too holds sufficient of that product to supply all customer orders. They also assume that rack revisits are allowed.

In terms of the picker rack sequencing problem  considered above,  PRSF,  where each rack can only be used once (so no revisits); where we are seeking a feasible solution (so we have a feasibility, not an optimisation, problem); and where we have $B$ bins, then the complexity result of~\cite{boysen17a} applies.

\begin{theorem}
For the picker rack sequencing problem, PRSF,  considered in this paper the problem of deciding whether there is a feasible order sequence, given a specific  rack sequence, is strongly 
NP-hard even if $B=1$.
\label{th2}
\end{theorem}

\begin{proof}
To prove this theorem consider a set of racks each of which holds sufficient of each of two or more distinct products to satisfy all orders requiring those products.  Consider a set of orders, where each order is for one unit of two or more distinct products, but the set of products required by any order is not a subset of the products stored in any of the racks. In other words it is impossible for any single rack to satisfy an order, so any consideration of the single bin used for processing orders which are opened and closed by the same rack is irrelevant (i.e.~we automatically know that $U_k=1~\forall k \in \{1,\ldots,K\}$). Hence if
$B=1$ then we have just a single bin available to hold products for orders that (by construction) require two or more racks and the problem reduces to that considered 
in~\cite{boysen17a}. 

So the complexity result given previously  in the literature 
by~\cite{boysen17a} applies, and the problem of deciding whether there is a feasible order sequence, given a specific  rack sequence, is strongly 
NP-hard even if $B=1$.
\end{proof}

\subsection{Order and rack processing}

The picker rack sequencing formulation PRSF given above explicitly sequences the racks in terms of their order of presentation to the picker, but does not explicitly sequence the orders that the picker processes. Rather, as the variables $\alpha_{ok}$ and $\beta_{ok}$ imply, we decide the racks associated with the opening and closing of each order. However given a solution to our rack sequencing formulation than the process for picking product from the $k$'th rack as it is presented to the picker is easily stated. Note that we will have from the numeric solution for our rack sequencing problem values for $\alpha_{ok}$,  $\beta_{ok}$ and  $\gamma_{iok}$ which enable us to  identify for the $k$'th rack in the presented sequence:
\begin{itemize}

\item The set $\Delta_k$ of orders which are opened and closed using that rack (any order $o \in O^*$ with $\alpha_{ok} = \beta_{ok} =1$ and $\gamma_{iok} = q_{io}~\forall i \in N^*$)

\item The set $\Theta_k$ of orders which were open previously,
 but are closed as that rack is presented (any order $o \in O^*$, $o \notin \Delta_k $ for which $\alpha_{o,k-1}=\alpha_{ok}=\beta_{ok}=1$)

\item The set $\Phi_k$ of orders which are first opened as that rack is presented (any order $o \in O^*$, $o \notin \Delta_k \cup \Theta_k$ for which either $k=1$ and $\alpha_{o1}=1$ or  $k \geq 2$ and $\alpha_{o,k-1}=0$ and $\alpha_{ok}=1$).

\item The set $\Omega_k$ of orders which were opened previously and which will remain open (any order $o \in O^*$, $o  \notin \Delta_k \cup \Theta_k \cup \Phi_k $ for which $k \geq 2$ and $\alpha_{o,k-1}=\alpha_{ok}=1$)
\end{itemize}

The procedure for processing of the  $k$'th rack presented to the picker  is:
\begin{enumerate}
\item First process all orders $o \in \Theta_k$, thereby freeing up bin positions 
\item Next use the single bin reserved for orders opened and closed by the same rack to open and close all orders $o \in \Delta_k$ 
\item Next process all orders $o \in \Phi_k$, making use of free bin positions
\item Finally process all orders $o \in \Omega_k$
\end{enumerate}
In each of the above steps the number of units of product $i$ to add into the bin associated with an order $o$ is given by  $\gamma_{iok}$ and the orders in each of the sets $\Delta_k$, $\Theta_k$, $\Phi_k$ and $\Omega_k$ can be processed in any sequence.

We would stress here that one of the benefits of the formulation (PRSF) presented in this paper is that there is no need to include in the formulation variables/constraints associated with the sequencing of orders. Rather order sequencing can be carried out in the manner discussed above.

\subsection{Rack revisits}

In PRSF presented above we assumed that each mobile rack only visits the picker once. Given this assumption then there is no guarantee that a feasible solution (feasible rack sequence) exists for a particular value of $B$.
However it is trivial to see that if we allow racks to visit the picker more than once, so rack revisits are allowed, then a feasible solution must always exists provided  Equation~(\ref{eq4}) is satisfied.
First note that if Equation~(\ref{eq4}) is satisfied the racks  allocated to a picker contain sufficient of each product to supply all of the orders allocated to the picker. Consider the racks $R^*$ as constituting a pool of racks from which we can choose any rack. Then:
\begin{compactitem}
\item Consider each order $o \in O^*$ in turn.  
\item Choose any rack from the pool which contains at least one product that has not yet been fully supplied in the order $o$ being considered. Take products required for that order from the rack for the bin associated with that order (consistent with the number of units needed of each required product and the rack inventory position).
\item Update the inventory position associated with the rack used and return the rack to the pool.
 \item Repeat until all orders have been satisfied. 
\end{compactitem}
The sequence in which racks are taken from the pool therefore constitutes a feasible rack sequence, although rack revisits (so the same rack being picked from the pool more than once) may be required.
Although, clearly, the above could be accomplished in a more effective manner (by considering more than one order when dealing with a rack taken from the pool) it does indicate that a feasible solution must always exist if rack revisits are allowed, although (potentially) we might need a large number of rack revisits.
Note here that the argument presented above shows that the problem of finding a feasible rack sequence, given no restrictions on the number of revisits, can be solved in 
pseudo-polynomial  time  
$O(|R^*| \sum_{i \in N^*} \sum_{o \in O^*} q_{io})$.

As commented above PRSF assumes that each rack visits the picker only once. To remove this assumption let $M$ be the maximum number of rack revisits (so the number of return visits after the first visit) allowed.  Then essentially we create $M$ copies of each of the racks, but link together the number of units of each product taken at each visit.

More formally first recall that $K=|R^*|$ and let $L \leftarrow K$. Without loss of generality label the (uncopied) racks in $R^*$ as $r=1,\ldots,L$. Let the copies of rack $r$ be racks $r+mL, m=1,\ldots,M$. Each copy of rack $r$ has the same inventory position as the original (uncopied) rack, 
i.e.~$s_{i,r+mL} = s_{ir}~m=1,\ldots,M,~\forall i \in N^*,~r=1,\ldots,L$.
 Then as a result of this copying of racks we have that the set $R^*$ has been enlarged to contain both the initial set of racks and their copies so that  we have $|R^*| = K \leftarrow  (M+1)L$. 

Let $\Gamma_{ior}$, integer $\geq0$, be the number of units of product $i \in N^*$ associated with order $o\in O^*$ that are taken from rack $r \in R^*$. Here we need only be concerned with $\Gamma_{ior}$ if $q_{io}s_{ir} > 0$, i.e.~if order $o$ requires product $i$ and rack $r$ contains some of product $i$. Then we add to our formulation the constraints:
\begin{optprog}
& \Gamma_{ior} & = & \sum_{k=1}^{K}\gamma_{iok}z_{kr} & 
\forall i \in N^*;  o \in O^*;  
r \in R^*:~q_{io}s_{ir} > 0
 \label{jeb1} \\
& \sum_{m=0}^{M} \sum_{\substack{o \in O^*,\\q_{io}s_{ir} > 0}} \kern -0.4em  \Gamma_{io,r+mL} & \leq & s_{ir} & \forall i \in N^*;  r \in \{1,\ldots,L\} \label{jeb2} 
\end{optprog}
Equation~(\ref{jeb1}) defines the number of units of product $i$ allocated to order $o$ by rack $r$ in terms of the previously defined variables $\gamma_{iok}$. Equation~(\ref{jeb2}) ensures that across the original rack $r$ and its $M$ copies we do not exceed the inventory position $s_{ir}$ associated with the rack.

Equation~(\ref{jeb1}) is nonlinear because of the product terms $\gamma_{iok}z_{kr}$, but these can be linearised in the following manner. Introduce the variable $\Lambda_{iokr}$ (integer, $\geq 0$) representing the product term $\gamma_{iok}z_{kr}$.
Let $Q_{ior}= \mbox{min}[q_{io},s_{ir}]$ be the maximum number of units of product $i$ associated with order $o$ that can be supplied from rack $r$. Then we have:
\begin{optprog}
& \Gamma_{\tau(o)or} & \geq & \alpha_{ok} + z_{kr} -1  & 
\forall  o \in O_1^*; r \in R^*;  k \in \{1,\ldots,K\}:~
q_{\tau(o)o}s_{\tau(o)r} > 0
\label{jebrev3a} \\
& \Gamma_{ior} & = & \sum_{k=1}^{K}\Lambda_{iokr} & 
\forall i \in N^*; o \in O^*\setminus O_1^*;  
r \in R^*:~ 
q_{io}s_{ir} > 0
\label{jeb5} \\
 & \Lambda_{iokr} & \leq & Q_{ior}z_{kr} & \forall i \in N^*; o \in O^*\setminus O_1^*; k \in \{1,\ldots,K\};
r \in R^* \label{jeb3} 
\end{optprog}
\vspace{-0.5cm}
\begin{optprog}
& \gamma_{iok} {-} Q_{ior}(1{-}z_{kr}) & \leq & \Lambda_{iokr}
\leq & \gamma_{iok} {+} Q_{ior}(1{-}z_{kr}) &
\forall i \in N^*;  o \in O^*\setminus O_1^*;
k \in \{1,\ldots,K\};
r \in R^* \label{jeb4} 
\end{optprog}
Equation~(\ref{jebrev3a}) ensures that for the unique product $\tau(o)$ associated with orders $ o \in O_1^*$  we will have 
$\Gamma_{\tau(o)or}$ set to one if order $o$ is dealt with at the $k$'th  presented rack and rack $r$ is the $k$'th  presented rack. 
Equations~(\ref{jeb5})-(\ref{jeb4}) deal with orders $o \in O^* \setminus O_1^*$. Equation~(\ref{jeb5}) is the linearisation of Equation~(\ref{jeb1}). If $z_{kr}=0$ then Equations~(\ref{jeb3}),(\ref{jeb4}) force $\Lambda_{iokr}$ to be zero and if  $z_{kr}=1$ they force $\Lambda_{iokr}$ to be $\gamma_{iok}$. So $\Lambda_{iokr}$ does correspond to the product term $\gamma_{iok}z_{kr}$. Our formulation for rack sequencing when rack revisits are allowed is therefore PRSF with the addition of Equations~(\ref{jeb2})-(\ref{jeb4}).

For the problem of rack sequencing with $M$ revisits per rack the same complexity result as given in Theorem~\ref{th2} applies. Namely the problem of deciding whether there is a feasible order sequence, given a specific  rack sequence, is strongly 
NP-hard even if $B=1$.

\subsection{Rack sequencing simplification}
With regard to our PRSF, and its extension to rack revisits as presented above, we are aware that a simplification can be made. This simplification relates to the set of orders $o \in O_1^*$ for which the order just comprises a single unit of one product.

First define PRSF1 as the formulation given by PRSF, but with variables $[U_k], [W_{ik}], [w_{ok}]$ and $[v_{ok}]$ removed from the formulation. In particular note that this means that in PRSF1  the right-hand side of 
Equation~(\ref{fin1}) is $B-1$ which (in terms of the mathematical formulation) means that we always have a single bin permanently and solely dedicated for use by orders opened and closed by the same rack. This contrasts with the situation considered in PRSF where (potentially) for some rack 
$k$ all $B$ bins could have been occupied by orders opened previously (at some rack $< k$), these orders remaining open during the presentation of rack $k$, not being closed until some future rack presentation (at some rack $> k$).
\noindent We then have the following theorem.
\begin{theorem}
\label{jebth2}
In   generating a feasible rack sequence for a single picker we can  initially neglect  all orders which comprise a single unit of one product. This is because  a feasible rack sequence for the remaining orders
can be easily modified to incorporate the neglected orders provided that:
\begin{compactitem}
\item the set of racks allocated to the picker collectively contain sufficient of each product to satisfy all of the orders allocated to the picker; and
\item  we have a single bin permanently and solely dedicated for use by orders opened and closed by the same rack (so we are considering formulation PRSF1).
\end{compactitem}
\end{theorem}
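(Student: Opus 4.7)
The plan is to prove Theorem~\ref{jebth2} constructively: start with a feasible rack sequence for the reduced set of orders $O^* \setminus O_1^*$, and then show how every neglected order $o \in O_1^*$ can be slotted into that sequence using only the dedicated bin, without disturbing any other decisions. Let such a feasible sequence (together with its associated picking decisions $\gamma_{iok}$) exist; since the neglected orders are not present in the reduced problem, nothing about the rack order itself needs to change.

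The heart of the argument is a counting argument showing that, for each $o \in O_1^*$ with its unique required product $\tau(o)$, there exists a rack in the sequence with at least one unused unit of $\tau(o)$. First I would write the total supply of any product $i$ over the allocated racks as $\sum_{r \in R^*} s_{ir}$ and use the first hypothesis, which states $\sum_{r \in R^*} s_{ir} \geq \sum_{o \in O^*} q_{io}$. After subtracting the demand $\sum_{o \in O^* \setminus O_1^*} q_{io}$ consumed by the reduced-problem solution, the residual supply is at least $\sum_{o \in O_1^*} q_{io}$, which equals the number of neglected orders whose single required unit is of product $i$. Hence, after processing the reduced problem, the residual rack inventories are collectively enough to cover every order in $O_1^*$. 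A straightforward assignment (e.g.~greedy, or any feasible matching between neglected orders and units of residual inventory) then pairs each $o \in O_1^*$ with a specific rack $r(o) \in R^*$ on which one unused unit of $\tau(o)$ sits.

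Having chosen $r(o)$ for every $o \in O_1^*$, I would invoke the second hypothesis: during the presentation of $r(o)$ the picker uses the dedicated bin position to open the order, pick the single unit of $\tau(o)$ from $r(o)$, close the order, and move it to the conveyor; the bin is then empty and ready to repeat the procedure for any other neglected order assigned to the same rack. Because these actions never occupy any of the $B-1$ non-dedicated bin positions, the bin-capacity constraint Equation~(\ref{req6}) continues to hold at every rack presentation for the augmented solution. Formally, in the notation of Equations~(\ref{req1})-(\ref{req12}) this corresponds to setting $\alpha_{o,k(r(o))} = \beta_{o,k(r(o))} = 1$ and $\gamma_{\tau(o),o,k(r(o))} = 1$ for each $o \in O_1^*$, where $k(r)$ denotes the position of rack $r$ in the sequence, leaving all other variables for the non-neglected orders unchanged. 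Equations~(\ref{req6a}),(\ref{req6b}) are then satisfied by construction, and Equations~(\ref{req3}),(\ref{req4}) follow from the counting argument above.

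The main obstacle is the bookkeeping of residual inventory when several orders in $O_1^*$ require the same product: one must be careful to assign them to racks in a way that respects each rack's per-product residual capacity, rather than merely the aggregate residual. I would handle this by formulating the assignment as a bipartite feasibility problem between neglected orders and the multiset of residual unit slots $\{(r,i) : s_{ir} - \sum_{o \in O^* \setminus O_1^*} \gamma_{ior} \geq 1\}$, and appealing to Hall's theorem (or, more simply, a greedy pass product by product). Since the counting inequality $\sum_{o \in O_1^*, \tau(o)=i} 1 \leq \sum_{r \in R^*} s_{ir} - \sum_{o \in O^* \setminus O_1^*} q_{io}$ holds for every product $i \in N^*$, Hall's condition is satisfied and a valid assignment exists, completing the proof.
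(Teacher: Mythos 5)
Your proposal is correct and follows essentially the same route as the paper's own proof: take the feasible sequence for $O^* \setminus O_1^*$, and absorb each single-unit order into the dedicated bin at some rack carrying residual inventory of its product, with feasibility guaranteed by the aggregate supply condition of Equation~(\ref{eq4}). Your appeal to Hall's theorem for the assignment step is more machinery than needed (since each order in $O_1^*$ wants one unit of one product, the problem decomposes by product into a pure counting argument, which is all the paper uses), but it is harmless and the argument is sound.
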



\begin{proof}
To prove this theorem suppose that we first solve the rack sequencing formulation: PRSF1 if rack revisits are not allowed; PRSF1 with the addition of Equations~(\ref{jeb2})-(\ref{jeb4}) if rack revisits are allowed,
but ignoring all orders $o \in O_1^*$, so just considering orders $o \in O^* \setminus O_1^*$. 

Assuming a feasible solution is found
then we have a sequence  for processing all of the racks, 
as well as information as to how many units of each product $i$ to allocate to order $o$ from the $k$'th  presented rack (i.e.~$\gamma_{iok}$). Take this rack sequence and for each rack presented, after dealing with the picking of products as detailed in $\gamma_{iok}$, attempt (if possible given the inventory remaining on the rack after picking) to satisfy orders  $o \in O_1^*$. These orders (by definition) can make use of the single bin permanently and solely dedicated  to processing orders which are opened and closed by the same rack and so they will not interfere with any usage of the other $B{-}1$ bins associated with the processing of currently open orders $o \in O^* \setminus O_1^*$. 

Now if Equation~(\ref{eq4}) is satisfied, i.e.~collectively the racks  allocated to the picker, which have subsequently been sequenced, contain sufficient of each product to  supply all of the orders allocated to the picker then it must be true that   after processing the entire rack sequence  all of the orders $o \in O_1^*$ will have been dealt with. 

In other words  any rack sequence associated with the feasible processing of orders $o \in O^* \setminus O_1^*$ is a feasible sequence for the processing of all orders $o \in O^*$ provided that the set of racks allocated to the picker satisfy Equation~(\ref{eq4}) and provided that we have a single bin permanently and solely dedicated  to processing orders which are opened and closed by the same rack. 
\end{proof}

This theorem means that, computationally, if we use PRSF1  we need only focus on finding a rack sequence for orders $o \in O^* \setminus O_1^*$ given that, as in our approach 
(Equations~(\ref{eq1})-(\ref{eq7})),
the racks allocated satisfy 
Equation~(\ref{eq4}).

 With regard to the importance of this theorem recall here that, as discussed above, in our view the rack sequencing problem is essentially a feasibility problem in that we assume that a primary optimisation has already been performed in terms of choosing which orders and racks are allocated to a picker. Hence finding a feasible rack sequence such that all orders can be picked using $B$ bin positions is the principal concern.

Depending upon the problem instance under consideration it may be that PRSF1, with 
Theorem~\ref{jebth2} applied,
is considerably smaller 
(in terms of variables/constraints) 
than PRSF, and hence (computationally) quicker to solve.
As a consequence of this if, by making use of PRSF1 and Theorem~\ref{jebth2}, we can achieve a feasible rack sequence quicker than using PRSF, then there seems no reason not to do so. Although, in terms of the underlying mathematics, PRSF1 assumes that we have a single bin permanently and solely dedicated  to processing orders which are opened and closed by the same rack it is important to stress here that \textbf{\emph{this is a convenient mathematical assumption for computational effectiveness, not a requirement for the 
real-world picker system to operate with a dedicated bin}}. In other words any rack sequence generated by PRSF1 and 
Theorem~\ref{jebth2}  can be applied if the real-world picker system does not operate with a dedicated bin.

\subsection{Integrating order and rack allocation with rack sequencing}

Above we have formulated, separately, the problems of order and rack allocation and rack sequencing. In particular we first solve the order and rack allocation problem for all $P$ pickers considered simultaneously. Then, from the solution obtained which allocates orders and racks to pickers, we have $P$ independent problems with regard to rack sequencing, one for each of the $P$ pickers.

Integrating our order and rack allocation formulation with our rack sequencing formulation into one formulation which  decides order and rack allocation simultaneous with rack sequencing for all $P$ pickers would be a challenging task, both in terms of producing such an integrated formulation and its subsequent computational solution.

As an indication of the difficulties in integrating our currently separate formulations note that our rack sequencing formulation takes as known the set of racks ($R^*$) and the set of orders ($O^*$) allocated to each picker. Knowledge of these sets is used in defining appropriate decision variables, Equations~(\ref{req10})-(\ref{req12}). But if these are decision variables become dependent on sets which are themselves not fixed, but emerge as part of the optimisation solution, then substantive, and non-trivial, changes will be needed to the rack sequencing formulation in order to produce an integrated formulation. 

For this reason we have not, in this paper, proposed the integration of our two formulations, rather we leave such integration as a subject for further future research.

\section{Discussion}
\label{sec:discussion}

In this section we discuss the application of the work presented in this paper within a dynamic environment (as new orders appear), as well as discuss the wider applicability of the work presented in this paper to other automated (non-mobile robot) fulfilment systems.

\subsection{Dynamic problem}

The work presented in this paper deals with two interrelated problems encountered in robotic mobile fulfilment systems, order and rack allocation, and rack sequencing. 
We have considered what would be referred to as the static version of both problems, in that we are dealing with a time period (of arbitrary duration) during which the set of orders under consideration are fixed. 

In a dynamic environment (such as the B2C context within which problems such as those considered in this paper naturally arise) clearly the orders outstanding, as well as the number of units of each product on each rack, will change. 
However it is common in the literature to first study the static problem before considering the dynamic problem, e.g.~see~\cite{beasley00, beasley04} where this was the approach taken with regard to the scheduling of aircraft landings.

We indicate below how our static approaches for both of the problems considered in this paper can be adapted for a dynamic environment in a natural way. Essentially this involves  treating the dynamic problem as a related series of static problems, where each new static problem reflects the situation that applies 
(e.g.~orders outstanding, rack inventory positions, sequenced racks)  when we come to solve the problem.

In terms of a dynamic environment  then in any particular practical setting there are a number of issues concerned with tuning our approach for the environment in which it is to be used. Factors which need to be considered here include:
\begin{itemize}
\item the computation time allowed before a solution is required
\item the speed with which the environment changes (e.g.~how quickly new orders appear, old orders are picked)
\item the time that elapses before we choose to update the order and rack inventory positions and resolve for a new order and rack allocation
\item the value to assign to $C_p$, which governs the number of orders allocated to picker $p$, Equation~(\ref{eq2})
\end{itemize}
We would anticipate that such tuning  could only be accomplished by taking historic data and experimenting with possible tuning options to find an option that gives good results over time.

\subsubsection{Order and rack allocation}
To deal with a dynamic environment we would envisage using our formulation for order and rack allocation in the following repetitive manner:
\begin{itemize}
\item[(a)] Given the current set of orders and current rack inventory positions solve the order and rack allocation formulation.
\item[(b)] Implement the solution and after some time has elapsed add any previously unallocated orders to any new orders that have been received, update the rack inventory positions, and go to (a).
\end{itemize}
Adding a constraint to ensure that a particular order $o$ must be allocated, for example because it is a priority order, is trivial (simply add the constraint 
$\sum_{p=1}^P x_{op}  = 1$). In a similar fashion any order $o$ previously allocated to a picker $p$, but whose allocation cannot now be changed (e.g.~because the order is currently in the process of being picked) can be dealt with by setting $x_{op}=1$.
Adding constraints to constrain the allocation of racks (e.g.~if a rack has been previously allocated to a specific picker and is not currently free to be allocated to any other picker) is also easily done.

\subsubsection{Rack sequencing}
Any change to order and rack allocation as a result of a dynamic update of any previous order and rack allocation solution will potentially necessitate changing rack sequencing and this (for each picker) can be done in the same fashion as above for order and rack allocation:
\begin{itemize}
\item[(a)] Given the current set of orders  and racks allocated to a picker  solve the rack sequencing formulation.
\item[(b)] Implement the solution and once a new order and rack allocation is made, which implies that the set of allocated orders and racks as well as rack inventory positions have been updated, go to (a).
\end{itemize}
Adding constraints to ensure that any previously sequenced orders and/or racks must maintain a given position in any new sequence 
(e.g.~because an order has already been opened and racks to fulfil that order already sequenced) can be easily achieved by adding constraints. For example order $o$ can be fixed in the sequence by setting $\alpha_{ok}$,  $\beta_{ok}$ and  $\gamma_{iok}$ appropriately. Rack $r$ can be fixed as the $k$'th rack presented to the picker by setting $z_{kr}=1$.

\subsection{Other automated fulfilment systems}
Above we have discussed the problems of order and rack allocation, and rack sequencing, in the context of a robotic mobile fulfilment system, where mobile robots bring the racks to the picker. However the formulations (and hence the solution approaches) presented in this paper for both of these problems  potentially have much wider applicability. 

For example suppose we have a number of pickers, but where each picker picks products for customer orders from a conveyor belt which brings racks of product to 
them 
(as in miniload systems~\cite{bozer18, fusser19}). Then there is clearly still a decision problem here, namely which orders should be allocated to which pickers and which racks should be allocated (conveyed) to each picker to supply the orders so allocated? This is precisely the problem  our formulation for order and rack allocation addresses. Moreover we have presented two heuristics (a single picker based heuristic and a partial integer optimisation heuristic)
 for this problem.

Similarly if the picker is constrained by having to assemble complete customer orders where there are a limited number of bins that can be in use simultaneously 
 as the racks are conveyed past them, then we have exactly the same decision  problem relating to rack sequencing as considered in this paper. Namely given the orders and racks allocated to a picker how should the racks be sequenced in terms of their presentation to the picker so that they can fulfil the customer orders allocated to them within their limited bin capacity?  This is precisely the problem  our formulation for rack sequencing addresses. Moreover in terms of finding a feasible rack sequence we have proved that, subject to certain conditions being satisfied, a feasible rack sequence for all orders can be produced by focusing on just a subset of the orders to be dealt with by the picker.

We would note here that both for order and rack allocation, and rack sequencing, the work presented in this paper does not make any assumptions as to  rack inventory positions.  In particular we do not assume a rack contains just one product, nor do we assume (e.g.~as in~\cite{boysen17a}) that a rack contains multiple products, but with enough inventory to satisfy all orders for the products it contains.

\section{Computational results}
\label{sec:results}

In this section we present computational results for the optimal and heuristic solution of our order and rack allocation formulation. In addition we give results for our picker rack sequencing formulation. Results are also presented for  a limited number of instances with characteristics associated  with miniload systems. We discuss the research questions we addressed in this paper and the contribution of this paper to the literature.

In the computational results presented below we used an Intel Xeon @ 2.40GHz with 32GB of RAM and Linux as the operating system. The code was written in C++ and Cplex 12.8~\citep{cplex128} was used as the mixed-integer solver.
A maximum time limit of 5 CPU minutes (300 seconds) was imposed for each instance.

\subsection{Test problems}
\label{sec:test}

To generate orders such that 
the average order comprises just 1.6 items~\citep{boysen19, boysen19b} and the vast number of orders contain only one or two items~\citep{weidinger18c} we used a truncated geometric distribution, the geometric distribution being the discrete equivalent of the continuous exponential distribution (such as was used in~\cite{boysen17a}). In more detail given a parameter $\mu$  the probability that an order contains $m$ items is $\mu(1-\mu)^{(m-1)}/(\sum_{n=1}^4\mu(1-\mu)^{(n-1)})$ for $m=1,2,3,4$. Obviously orders for just one item involve just one unit of a single (randomly chosen) product. Orders for two items contain either two separate products requiring one unit each, or a single product requiring two units (with equal probability). Orders for three items are for one product requiring one unit and another product requiring two units. Orders for four items contain two separate products each requiring two units. The value used for $\mu$ was $\mu=1/1.73$, chosen such that the average order (in the truncated geometric distribution) comprises approximately 1.6 items. Moreover approximately 85\% of orders are for only one or two items. All of the test problems used in this paper have been made publicly available at
\href{https://www.dcc.ufmg.br/~arbex/mobileRacks.html}{https://www.dcc.ufmg.br/$\sim$arbex/mobileRacks.html}
in order that future researchers can compare their results with ours on exactly the same set of problem instances.

With regard to the racks then racks may have up to 50 storage locations (for potentially 50 different products) per face, 
e.g.~see~\cite{cnn18}. Although obviously some racks will have fewer storage locations because of the need to store larger  products.
In our instances we choose to regard racks as having one face and being able to store 25 different products per rack, these products being randomly generated, without replacement, from the entire set of $N$ products. The number of units of each product stored per rack was an integer randomly chosen from $[1,2,3,4]$. 

For a generated test problem with $N$ products, $O$ orders and $R$ racks it could be  infeasible because it is not possible to supply all of the orders given the products stored in the racks generated (i.e.~$\sum_{o=1}^O q_{io} > \sum_{r=1}^R s_{ir}$ for some product $i~i=1,\ldots,N$). Any test problem so generated was rejected and a new test problem generated until we had a feasible test problem.

\subsection{Order and rack allocation, optimal solutions}
\label{section72}

Table~\ref{table1} shows results for the optimal solution of our order and rack allocation formulation when solved using Cplex for instances involving between $N=100$ and $N=500$ products with $O=50$ orders (and a variety of racks $R$ and pickers $P$). In that table the value given for $C_p$ applies for all pickers $p,~p=1,\ldots,P$. The values used for $C_p$ were chosen to cover the range from a low proportion of the total number of orders to all possible orders ($O$).

In Table~\ref{table1} {\it\textbf{T(s)}} denotes the total computation time in seconds.
{\it\textbf{UB}} is the best solution (upper bound) obtained at the end of the search, either when the instance was solved to proven optimality or when the time limit was reached.
 {\it\textbf{B(s)}} denotes the  computation time in seconds before the (final) best solution {\it\textbf{UB}}  was first encountered in the solution process (either at the initial root node or during the tree search).

{\it\textbf{LB}} is the best lower bound obtained at the end of the search either when the instance was solved to proven optimality or when the time limit was reached. In order to improve readability the {\it\textbf{LB}} value is shown as {\it\textbf{UB}} if the problem is solved to proven optimality (in which case the {\it\textbf{UB}} value shown is the optimal solution value).

{\it\textbf{GAP}} is defined as $100 (\text{UB} - \text{LB})/\text{UB}$. {\it\textbf{FLB}} is the (fractional) lower bound obtained at the end of the root node of the branch-and-bound search tree as reported by Cplex and {\it\textbf{FGAP}} is defined as $100 (\text{UB} - \text{FLB})/\text{UB}$. This
means that, for instances solved to optimality, {\it\textbf{FGAP}} denotes the gap between the optimal solution and the lower bound obtained after solving the root node.
Finally, {\it\textbf{NS}} stands for the total number of nodes investigated during the search. 

So, for example, consider the last instance in Table~\ref{table1} 
with $N=500$ and $R=75$ when there were $P=10$ pickers, each picker with a $C_p$ value of 5, so being allocated 5 orders. This problem was solved in 13.23 seconds with the optimal solution being 23. This means that 23 racks was the minimum number of racks needed to deal with the total of $\sum_{p=1}^P C_p=50$ orders allocated to the pickers. A solution of value 23 was first obtained after 3.01 seconds and the lower bound obtained at the end of the root node of the branch-and-bound search tree was 20.00, so the gap (FGAP) value was $100(23-20.00)/23 = 13.04\%$. The tree search required 2138 nodes. As this instance was solved within our time limit of 300 seconds the solution obtained has been proved to be optimal.

As can be clearly seen from Table~\ref{table1} our order and rack allocation formulation solves all our instances with $O=50$  orders to proven optimality very quickly.

\begin{table}[tbp]
\centering
{\tiny
\renewcommand{\tabcolsep}{1mm} 
\renewcommand{\arraystretch}{1.3} 
\begin{tabular}{|c|c|c|rrrrrrrr|}
\hline
Instance & $P$ & $C_p$ & \multicolumn{1}{c}{T(s)} &
\multicolumn{1}{c}{B(s)} &
 \multicolumn{1}{c}{UB} & \multicolumn{1}{c}{GAP} & \multicolumn{1}{c}{LB} & \multicolumn{1}{c}{FGAP} & \multicolumn{1}{c}{FLB} & \multicolumn{1}{c|}{NS}\\
\hline
\multirow{2}{*}{\begin{tabular}{l}$N = 100$\\$R = 50$\end{tabular}} & 1 & 25 &     0.12  & 0.12 & 2 & -- & UB &     1.50 &     1.97 & 7\\
 &  & 50 &     0.10 & 0.01 & 7 & -- & UB &    28.00 &     5.04 & 133\\
\cline{2-11}
 & 5 & 5 &     0.09 & 0.08 & 5 & -- & UB &     0.00 &     5.00 & 1\\
 &  & 7 &     0.15& 0.15 & 5 & -- & UB &     0.00 &     5.00 & 1\\
 &  & 10 &    34.22 & 34.22 & 7 & -- & UB &    28.57 &     5.00 & 2923\\
\cline{2-11}
 & 10 & 2 &     0.08 & 0.03 & 10 & -- & UB &     0.00 &    10.00 & 1\\
 &  & 5 &     3.52 & 3.52 & 10 & -- & UB &     0.00 &    10.00 & 106\\
\cline{1-11}
\multirow{2}{*}{\begin{tabular}{l}$N = 200$\\$R = 50$\end{tabular}} & 1 & 25 &     0.02 & 0.01& 4 & -- & UB &     0.00 &     4.00 & 1\\
 &  & 50 &     0.01 & 0.01 & 11 & -- & UB &     0.00 &    11.00 & 1\\
\cline{2-11}
 & 5 & 5 &     0.13 & 0.10 & 5 & -- & UB &     0.00 &     5.00 & 1\\
 &  & 7 &    68.36 & 0.62 & 8 & -- & UB &    37.50 &     5.00 & 31875\\
 &  & 10 &     4.09 & 3.34 & 12 & -- & UB &    16.67 &    10.00 & 726\\
\cline{2-11}
 & 10 & 2 &     0.07 & 0.01 & 10 & -- & UB &     0.00 &    10.00 & 1\\
 &  & 5 &    91.25 & 86.69 & 15 & -- & UB &    33.33 &    10.00 & 4325\\
\cline{1-11}
\multirow{2}{*}{\begin{tabular}{l}$N = 300$\\$R = 50$\end{tabular}} & 1 & 25 &     0.01 & 0.00 & 5 & -- & UB &     0.00 &     5.00 & 1\\
 &  & 50 &     0.01 & 0.00 & 15 & -- & UB &     0.00 &    15.00 & 1\\
\cline{2-11}
 & 5 & 5 &     0.57 & 0.37 & 6 & -- & UB &    16.67 &     5.00 & 99\\
 &  & 7 &     1.43 & 0.57 & 9 & -- & UB &    24.43 &     6.80 & 364\\
 &  & 10 &     0.82 & 0.70 & 16 & -- & UB &     6.25 &    15.00 & 85\\
\cline{2-11}
 & 10 & 2 &     0.05 & 0.03 & 10 & -- & UB &     0.00 &    10.00 & 1\\
 &  & 5 &    49.65 & 46.44 & 18 & -- & UB &    16.67 &    15.00 & 3076\\
\cline{1-11}
\multirow{2}{*}{\begin{tabular}{l}$N = 400$\\$R = 75$\end{tabular}} & 1 & 25 &     0.03 & 0.03 & 5 & -- & UB &     0.00 &     5.00 & 7\\
 &  & 50 &     0.00 & 0.00 & 19 & -- & UB &     0.00 &    19.00 & 1\\
\cline{2-11}
 & 5 & 5 &     0.67 & 0.67 & 6 & -- & UB &    16.67 &     5.00 & 299\\
 &  & 7 &     4.29 & 0.66 & 10 & -- & UB &    22.10 &     7.79 & 2853\\
 &  & 10 &     0.40 & 0.40 & 19 & -- & UB &     2.63 &    18.50 & 70\\
\cline{2-11}
 & 10 & 2 &     0.05 & 0.01 & 10 & -- & UB &     0.00 &    10.00 & 1\\
 &  & 5 &     4.11 & 4.04 & 20 & -- & UB &     7.50 &    18.50 & 466\\
\cline{1-11}
\multirow{2}{*}{\begin{tabular}{l}$N = 500$\\$R = 75$\end{tabular}} & 1 & 25 &     0.00 & 0.00 & 6 & -- & UB &     0.00 &     6.00 & 1\\
 &  & 50 &     0.00 & 0.00 & 20 & -- & UB &     0.00 &    20.00 & 1\\
\cline{2-11}
 & 5 & 5 &     0.70 & 0.35 & 7 & -- & UB &    20.14 &     5.59 & 75\\
 &  & 7 &     0.85 & 0.85  & 10 & -- & UB &    10.40 &     8.96 & 238\\
 &  & 10 &     0.22 & 0.15 & 20 & -- & UB &     0.00 &    20.00 & 1\\
\cline{2-11}
 & 10 & 2 &     0.11 & 0.04 & 10 & -- & UB &     0.00 &    10.00 & 1\\
 &  & 5 &    13.23 & 3.01 & 23 & -- & UB &    13.04 &    20.00 & 2138\\
\hline
\multicolumn{3}{r}{\textbf{Average:}} & 
\multicolumn{1}{r}{\textbf{    7.98}} & 
\multicolumn{1}{r}{\textbf{    5.35}} & 
\multicolumn{1}{c}{} & \multicolumn{1}{r}{\textbf{0}} & \multicolumn{1}{c}{} & \multicolumn{1}{r}{\textbf{    8.63}} & \multicolumn{1}{c}{} & \multicolumn{1}{r}{\textbf{1425}}\end{tabular}
}
\caption{Results: order and rack allocation, $O=50$ orders}
\label{table1}
\end{table}

Given the results in Table~\ref{table1} we generated larger instances, increasing the number of orders from $O=50$ (using $O=100,150,200$) and also increasing the number of racks. The results for instances with $O=100,150,200$ are shown in 
Tables~\ref{table2}-\ref{table200}.

In Tables~\ref{table2}-\ref{table200},
by contrast with Table~\ref{table1} where all problems were solved to proven optimality, a number of problems were not solved to proven optimality within our (self-imposed) 5 CPU minute time limit. 
Instances that went to time limit are indicated by {\it\textbf{TL}} in these tables. Here the logic for imposing a 5 minute time limit was that we might reasonably expect that in a B2C environment decisions as to order and rack allocation have to be made relatively quickly. 12 of the 35 problems in Table~\ref{table2}, 17 of the 30 problems in Table~\ref{table150}, and 23 of the 30 problems in Table~\ref{table200} went to time limit.

Generalising for $P=5,10$ it appears from Table~\ref{table2} and Table~\ref{table150} that, for a fixed $P$, the problem becomes harder to solve as $\sum_{p=1}^P C_p$ becomes larger. This makes intuitive sense as for a fixed $P$ the term $\sum_{p=1}^P C_p$ defines, out of the $O$ orders, how many are to be allocated to the pickers. The more orders that have to be allocated the greater the number of  racks that will be needed.

With regard to  Table~\ref{table2} and Table~\ref{table150}, we would note that our order and rack allocation formulation seems very effective when there is just a single picker, so $P=1$, with problems of this type all being solved to proven optimality relatively quickly (under 20 seconds even for the most demanding problem with $P=1$ in Table~\ref{table150}). However only one of the six single picker problems with $O=200$ in Table~\ref{table200} was solved to proven optimality. In general we can conclude that the problems with $O=200$ in Table~\ref{table200} are challenging for Cplex to solve within our 5 CPU minute time limit.

Considering Tables~\ref{table2}-\ref{table200} we can see that, although we might impose a longer time limit, and hence resolve some of the current unsolved problems, we will at some point reach the effective computational limit of what can be achieved with linear programming relaxation based tree search. 
Recalling our discussion above with regard to the necessity of having a heuristic approach given the complexity of the order and rack allocation problem, we therefore decided not to impose a longer  time  limit, but instead to investigate our heuristic approaches for the order and rack allocation problem.

\begin{table}[tbp]
\centering
{\tiny
\renewcommand{\tabcolsep}{1mm} 
\renewcommand{\arraystretch}{1.3} 
\begin{tabular}{|c|c|c|rrrrrrrr|}
\hline
Instance & $P$ & $C_p$ & \multicolumn{1}{c}{T(s)} & 
\multicolumn{1}{c}{B(s)} &
\multicolumn{1}{c}{UB} & \multicolumn{1}{c}{GAP} & \multicolumn{1}{c}{LB} & \multicolumn{1}{c}{FGAP} & \multicolumn{1}{c}{FLB} & \multicolumn{1}{c|}{NS}\\
\hline
\multirow{2}{*}{\begin{tabular}{l}$N = 100$\\$R = 100$\end{tabular}} & 1 & 50 &     0.13 & 0.13 & 2 & -- & UB &     7.00 &     1.86 & 28\\
 &  & 100 &     0.26 & 0.20 & 7 & -- & UB &    27.29 &     5.09 & 285\\
\cline{2-11}
 & 5 & 10 &     0.27 & 0.26 & 5 & -- & UB &     0.00 &     5.00 & 1\\
 &  & 15 &     0.78 & 0.65 & 5 & -- & UB &     0.00 &     5.00 & 1\\
 &  & 20 & TL & 4.38 & 9 &    35.78 &     5.78 &    44.44 &     5.00 & 6294\\
\cline{2-11}
 & 10 & 5 &     0.43 & 0.28 & 10 & -- & UB &     0.00 &    10.00 & 1\\
 &  & 10 & TL & 189.30 & 11 &     9.09 &    10.00 &     9.09 &    10.00 & 3979\\
\cline{1-11}
\multirow{2}{*}{\begin{tabular}{l}$N = 200$\\$R = 100$\end{tabular}} & 1 & 50 &     0.04 & 0.02 & 4 & -- & UB &     0.00 &     4.00 & 1\\
 &  & 100 &     0.22 & 0.04 & 13 & -- & UB &    17.54 &    10.72 & 365\\
\cline{2-11}
 & 5 & 10 &     0.53 & 0.43 & 5 & -- & UB &     0.00 &     5.00 & 1\\
 &  & 15 & TL & 5.54 & 9 &    26.67 &     6.60 &    38.22 &     5.56 & 16864\\
 &  & 20 & TL & 5.45 & 18 &    36.50 &    11.43 &    42.67 &    10.32 & 7177\\
\cline{2-11}
 & 10 & 5 &     0.41 & 0.02 & 10 & -- & UB &     0.00 &    10.00 & 1\\
 &  & 10 & TL & 53.53 & 24 &    55.25 &    10.74 &    56.83 &    10.36 & 1894\\
\cline{1-11}
\multirow{2}{*}{\begin{tabular}{l}$N = 300$\\$R = 100$\end{tabular}} & 1 & 50 &     0.08 & 0.04 & 6 & -- & UB &     0.00 &     6.00 & 1\\
 &  & 100 &     0.12 & 0.05 & 18 & -- & UB &    12.61 &    15.73 & 158\\
\cline{2-11}
 & 5 & 10 & TL & 1.61 & 8 &    26.13 &     5.91 &    37.50 &     5.00 & 60413\\
 &  & 15 & TL & 221.00 & 11 &    10.27 &     9.87 &    24.91 &     8.26 & 16649\\
 &  & 20 & TL & 239.93 & 19 &    10.42 &    17.02 &    19.00 &    15.39 & 9009\\
\cline{2-11}
 & 10 & 5 &     0.63 & 0.44 & 10 & -- & UB &     0.00 &    10.00 & 1\\
 &  & 10 & TL & 66.61 & 28 &    41.11 &    16.49 &    45.07 &    15.38 & 3562\\
\cline{1-11}
\multirow{2}{*}{\begin{tabular}{l}$N = 400$\\$R = 100$\end{tabular}} & 1 & 50 &     0.09 & 0.03 & 7 & -- & UB &     0.00 &     7.00 & 1\\
 &  & 100 &     0.14 & 0.04 & 22 & -- & UB &    10.64 &    19.66 & 154\\
\cline{2-11}
 & 5 & 10 & TL & 1.38 & 9 &    18.22 &     7.36 &    28.67 &     6.42 & 46686\\
 &  & 15 &   243.56 & 243.56 & 13 & -- & UB &    16.77 &    10.82 & 9803\\
 &  & 20 &   202.59 & 202.59 & 22 & -- & UB &    12.91 &    19.16 & 6094\\
\cline{2-11}
 & 10 & 5 &     0.49 & 0.44 & 10 & -- & UB &     0.00 &    10.00 & 1\\
 &  & 10 & TL & 40.86 & 29 &    30.48 &    20.16 &    34.10 &    19.11 & 3815\\
\cline{1-11}
\multirow{2}{*}{\begin{tabular}{l}$N = 500$\\$R = 100$\end{tabular}} & 1 & 50 &     0.09 & 0.01 & 8 & -- & UB &     0.00 &     8.00 & 1\\
 &  & 100 &     0.06 & 0.01 & 26 & -- & UB &     0.00 &    26.00 & 1\\
\cline{2-11}
 & 5 & 10 &   169.10 & 2.12 & 10 & -- & UB &    25.80 &     7.42 & 33340\\
 &  & 15 &     9.19 & 3.04 & 15 & -- & UB &    13.80 &    12.93 & 388\\
 &  & 20 &    84.80 & 84.25 & 27 & -- & UB &     7.67 &    24.93 & 2436\\
\cline{2-11}
 & 10 & 5 &     5.48 & 5.48 & 10 & -- & UB &     0.00 &    10.00 & 49\\
 &  & 10 & TL & 273.72 & 33 &    22.27 &    25.65 &    26.30 &    24.32 & 5144\\
\hline
\multicolumn{3}{r}{\textbf{Average:}} 
& \multicolumn{1}{r}{\textbf{  123.41}} 
& \multicolumn{1}{r}{\textbf{  47.07}} 
& \multicolumn{1}{c}{} & \multicolumn{1}{r}{\textbf{    9.21}} & \multicolumn{1}{c}{} & \multicolumn{1}{r}{\textbf{   15.97}} & \multicolumn{1}{c}{} & \multicolumn{1}{r}{\textbf{6703}}\end{tabular}

}
\caption{Results: order and rack allocation, $O=100$ orders}
\label{table2}

\end{table}

\begin{table}[tbp]
\centering
{\tiny
\renewcommand{\tabcolsep}{1mm} 
\renewcommand{\arraystretch}{1.3} 
\begin{tabular}{|c|c|c|rrrrrrrr|}
\hline
Instance & $P$ & $C_p$ & \multicolumn{1}{c}{T(s)} & 
\multicolumn{1}{c}{B(s)} &
\multicolumn{1}{c}{UB} & \multicolumn{1}{c}{GAP} & \multicolumn{1}{c}{LB} & \multicolumn{1}{c}{FGAP} & \multicolumn{1}{c}{FLB} & \multicolumn{1}{c|}{NS}\\
\hline
\multirow{2}{*}{\begin{tabular}{l}$N = 100$\\$R = 100$\end{tabular}} & 1 & 150 &     6.24&     4.44 & 10 & -- & UB &    24.97 &     7.50  & 8961\\
\cline{2-11}
 & 5 & 10 &     0.60  &     0.54& 5 & -- & UB &     0.00 &     5.00 & 1\\
 &  & 20 &     4.61 &     4.61& 5 & -- & UB &     0.00 &     5.00  & 75\\
 &  & 30 & TL &    15.80 & 14 &    43.05 &     7.97 &    47.89 &     7.29  & 4288\\
\cline{2-11}
 & 10 & 10 &     3.26 & 3.20 & 10 & -- & UB &     0.00 &    10.00 &     1\\
 &  & 15 & TL & 64.58 &15 &    33.33 &    10.00 &    33.33 &    10.00 &     1450\\
\cline{1-11}
\multirow{2}{*}{\begin{tabular}{l}$N = 200$\\$R = 100$\end{tabular}} & 1 & 150 &    18.06 &     0.25 & 16 & -- & UB &    24.83 &    12.03 & 17185\\
\cline{2-11}
 & 5 & 10 &     0.55 &     0.49& 5 & -- & UB &     0.00 &     5.00  & 1\\
 &  & 20 & TL & 10.56 & 10 &    39.79 &     6.02 &    44.82 &     5.52 &    5283\\
 &  & 30 & TL & 21.69 &21 &    39.35 &    12.74 &    44.05 &    11.75 &     3483\\
\cline{2-11}
 & 10 & 10 &     7.20 & 5.46 &10 & -- & UB &     0.00 &    10.00 &      1\\
 &  & 15 & TL & 123.19 &23 &    48.32 &    11.89 &    49.15 &    11.70 &    494\\
\cline{1-11}
\multirow{2}{*}{\begin{tabular}{l}$N = 300$\\$R = 100$\end{tabular}} & 1 & 150 &     0.90 &     0.22 & 22 & -- & UB &    13.34 &    19.07 & 910\\
\cline{2-11}
 & 5 & 10 &     1.89 &      1.75 & 5 & -- & UB &     0.00 &     5.00 & 1\\
 &  & 20 & TL & 15.58 &12 &    27.76 &     8.67 &    31.47 &     8.22 &     1894\\
 &  & 30 & TL & 29.99 &29 &    33.05 &    19.42 &    36.13 &    18.52 &     2410\\
\cline{2-11}
 & 10 & 10 & TL & 56.42 &15 &    33.33 &    10.00 &    33.33 &    10.00 &     1695\\
 &  & 15 & TL & 108.34 & 34 &    44.74 &    18.79 &    45.48 &    18.54 &    464\\
\cline{1-11}
\multirow{2}{*}{\begin{tabular}{l}$N = 400$\\$R = 150$\end{tabular}} & 1 & 150 &    15.40 &    15.31& 24 & -- & UB &    14.48 &    20.52  & 11622\\
\cline{2-11}
 & 5 & 10 &     4.98 &      4.98 &5 & -- & UB &     0.00 &     5.00  & 14\\
 &  & 20 & TL & 17.45 &13 &    24.34 &     9.84 &    29.18 &     9.21 &     1964\\
 &  & 30 & TL & 26.51 &29 &    26.89 &    21.20 &    30.52 &    20.15 &     1657\\
\cline{2-11}
 & 10 & 10 & TL & 77.45 &17 &    41.18 &    10.00 &    41.18 &    10.00 &     972\\
 &  & 15 & TL & 129.53 & 33 &    38.25 &    20.38 &    39.08 &    20.11 &   484\\
\cline{1-11}
\multirow{2}{*}{\begin{tabular}{l}$N = 500$\\$R = 150$\end{tabular}} & 1 & 150 &     1.06 &     0.23 & 29 & -- & UB &    10.78 &    25.87 & 616\\
\cline{2-11}
 & 5 & 10 &    12.47 &    12.47& 6 & -- & UB &    16.67 &     5.00  & 359\\
 &  & 20 & TL & 15.48 & 15 &    16.38 &    12.54 &    21.10 &    11.84 &    4194\\
 &  & 30 & TL & 276.60 &35 &    24.04 &    26.59 &    26.91 &    25.58 &    2252\\
\cline{2-11}
 & 10 & 10 & TL &    44.29 & 18 &    31.67 &    12.30 &    32.93 &    12.07 & 495\\
 &  & 15 & TL & 96.55 &42 &    38.56 &    25.80 &    40.30 &    25.07 &     482\\
\hline
\multicolumn{3}{r}{\textbf{Average:}} 
& \multicolumn{1}{r}{\textbf{  172.57 }} 
& \multicolumn{1}{r}{\textbf{  39.47 }} 
& \multicolumn{1}{c}{} & \multicolumn{1}{r}{\textbf{    19.47 }} & \multicolumn{1}{c}{} & \multicolumn{1}{r}{\textbf{   24.40}} & \multicolumn{1}{c}{} & \multicolumn{1}{r}{\textbf{2457}}\end{tabular}

}
\caption{Results: order and rack allocation, $O=150$ orders}
\label{table150}

\end{table}

\begin{table}[tbp]
\centering
{\tiny
\renewcommand{\tabcolsep}{1mm} 
\renewcommand{\arraystretch}{1.3} 
\begin{tabular}{|c|c|c|rrrrrrrr|}
\hline
Instance & $P$ & $C_p$ & \multicolumn{1}{c}{T(s)} & 
\multicolumn{1}{c}{B(s)} &
\multicolumn{1}{c}{UB} & \multicolumn{1}{c}{GAP} & \multicolumn{1}{c}{LB} & \multicolumn{1}{c}{FGAP} & \multicolumn{1}{c}{FLB} & \multicolumn{1}{c|}{NS}\\
\hline

\multirow{2}{*}{\begin{tabular}{l}$N = 100$\\$R = 200$\end{tabular}} & 1 & 200 & TL &     0.17 & 11 &    23.32 &     8.44 &    35.76 &     7.07 & 249839\\
\cline{2-11}
 & 5 & 20 &     7.52 &     7.52 & 5 & -- & UB &     0.00 &     5.00 & 17\\
 &  & 30 & TL &    23.16 & 8 &    37.50 &     5.00 &    37.50 &     5.00 & 4508\\
 &  & 40 & TL &    38.02 & 13 &    45.25 &     7.12 &    46.97 &     6.89 & 987\\
\cline{2-11}
 & 10 & 10 &     8.29 &     8.23 & 10 & -- & UB &     0.00 &    10.00 & 1\\
 &  & 20 & TL &   279.83 & 17 &    41.18 &    10.00 &    41.18 &    10.00 & 611\\
\cline{1-11}
\multirow{2}{*}{\begin{tabular}{l}$N = 200$\\$R = 200$\end{tabular}} & 1 & 200 & TL &    96.89 & 16 &    13.07 &    13.91 &    26.57 &    11.75 & 113291\\
\cline{2-11}
 & 5 & 20 &   109.89 &   109.89 & 5 & -- & UB &     0.00 &     5.00 & 411\\
 &  & 30 & TL &    40.21 & 11 &    40.52 &     6.54 &    42.89 &     6.28 & 461\\
 &  & 40 & TL &    62.37 & 22 &    46.95 &    11.67 &    47.96 &    11.45 & 820\\
\cline{2-11}
 & 10 & 10 &    15.81 &    14.40 & 10 & -- & UB &     0.00 &    10.00 & 1\\
 &  & 20 & TL &   137.39 & 46 &    74.63 &    11.67 &    75.19 &    11.41 & 63\\
\cline{1-11}
\multirow{2}{*}{\begin{tabular}{l}$N = 300$\\$R = 200$\end{tabular}} & 1 & 200 & TL &     2.57 & 22 &    15.47 &    18.60 &    26.27 &    16.22 & 119122\\
\cline{2-11}
 & 5 & 20 & TL &    29.71 & 10 &    42.73 &     5.73 &    44.85 &     5.51 & 1425\\
 &  & 30 & TL &    59.21 & 14 &    36.13 &     8.94 &    38.57 &     8.60 & 471\\
 &  & 40 & TL &    86.67 & 29 &    44.30 &    16.15 &    45.05 &    15.94 & 499\\
\cline{2-11}
 & 10 & 10 &     9.67 &     9.41 & 10 & -- & UB &     0.00 &    10.00 & 1\\
 &  & 20 & TL &   135.47 & 77 &    79.48 &    15.80 &    79.59 &    15.71 & 75\\
\cline{1-11}
\multirow{2}{*}{\begin{tabular}{l}$N = 400$\\$R = 200$\end{tabular}} & 1 & 200 & TL &   124.01 & 25 &     6.70 &    23.33 &    20.70 &    19.83 & 94129\\
\cline{2-11}
 & 5 & 20 & TL &    17.55 & 10 &    31.38 &     6.86 &    35.74 &     6.43 & 4364\\
 &  & 30 & TL &    58.22 & 16 &    30.93 &    11.05 &    32.33 &    10.83 & 715\\
 &  & 40 & TL &    76.71 & 34 &    42.02 &    19.71 &    42.91 &    19.41 & 1020\\
\cline{2-11}
 & 10 & 10 &     7.00 &     6.17 & 10 & -- & UB &     0.00 &    10.00 & 1\\
 &  & 20 & TL &   214.08 & 35 &    44.45 &    19.44 &    45.23 &    19.17 & 213\\
\cline{1-11}
\multirow{2}{*}{\begin{tabular}{l}$N = 500$\\$R = 200$\end{tabular}} & 1 & 200 &    19.85 &     5.87 & 32 & -- & UB &    12.23 &    28.09 & 8100\\
\cline{2-11}
 & 5 & 20 & TL &    18.99 & 13 &    30.02 &     9.10 &    33.38 &     8.66 & 2693\\
 &  & 30 & TL &    56.03 & 22 &    32.88 &    14.77 &    33.95 &    14.53 & 528\\
 &  & 40 & TL &    65.18 & 40 &    30.39 &    27.85 &    31.63 &    27.35 & 799\\
\cline{2-11}
 & 10 & 10 & TL &   122.65 & 15 &    33.33 &    10.00 &    33.33 &    10.00 & 494\\
 &  & 20 & TL &   199.38 & 58 &    52.58 &    27.50 &    53.05 &    27.23 & 29\\
\cline{1-11}
\multirow{2}{*}{\begin{tabular}{l}$N = 1000$\\$R = 500$\end{tabular}} & 1 & 200 & TL &   289.03 & 42 &     6.34 &    39.34 &    12.11 &    36.91 & 63754\\
\cline{2-11}
 & 5 & 20 & TL &    53.75 & 15 &    13.01 &    13.05 &    16.14 &    12.58 & 636\\
 &  & 30 & TL &   202.89 & 27 &    20.81 &    21.38 &    21.61 &    21.16 & 165\\
 &  & 40 & TL &   251.12 & 55 &    33.04 &    36.83 &    33.04 &    36.83 & 10\\
\cline{2-11}
& 10 & 10 & TL &   170.88 & 24 &    47.73 &    12.55 &    &      & 0\\
 &  & 20 & TL & TL &  &  & 35.73 &  &  & 0\\

\hline
\multicolumn{3}{r}{\textbf{Average:}} 
& \multicolumn{1}{r}{\textbf{  246.61 }} 
& \multicolumn{1}{r}{\textbf{  93.71 }} 
& \multicolumn{1}{c}{} & \multicolumn{1}{r}{\textbf{    28.46 }} & \multicolumn{1}{c}{} & \multicolumn{1}{r}{\textbf{   30.76}} & \multicolumn{1}{c}{} & \multicolumn{1}{r}{\textbf{18618}}\end{tabular}

}
\caption{Results: order and rack allocation, $O=200$ orders}
\label{table200}

\end{table}

\subsection{Order and rack allocation, heuristic solutions}
\label{section73}

Tables~\ref{table3}-\ref{table200h} give the results for applying our single picker based  (SPB) heuristic and our partial integer optimisation (PIO) heuristic outlined previously above to the test problems in Tables~\ref{table1}-\ref{table200}. Note here that we exclude the case of a single picker (so $P=1$) from Tables~\ref{table3}-\ref{table200h} since, as commented above, the heuristics proposed will give exactly the same solution  as our optimal approach. For PIO we present results for $\tau=1$ and for $\tau=2$.

In Tables~\ref{table3}-\ref{table200h} we give the optimal (or best-known) solution values as found by Cplex and taken from Tables~\ref{table1}-\ref{table200}. We also give the computation time for our heuristics and the solution values obtained as well as the percentage deviations from the optimal  value, computed as 
100(heuristic solution value~-~optimal value)/(optimal value). If the optimal value is unknown, as occurs for some cases in  
Tables~\ref{table2}-\ref{table200}, we use the best-known solution value in computing the percentage deviation.  A positive percentage deviation indicates a worse result than Cplex, a negative percentage deviation indicates a better result than Cplex.

So here, for example,  for SPB in Table~\ref{table3} we have that for the problem with $O = 50, N = 500, R = 75, P=10, C_p=5$ we found a solution of value 25 in 1.84 seconds (with an associated percentage deviation from the optimal/best-known value of 23 of 8.70\%).

Considering Table~\ref{table3}, for the problems with $O=50$ orders, the average computation time is small in all cases. The lowest average percentage deviation of 2.58\% is associated with PIO ($\tau=2$), requiring an average of 1.37 seconds. This compares with an average percentage deviation of 0\% for Cplex (by definition since percentage deviation is calculated based on the solution value given by Cplex), but with an average computation time of 11.16 seconds.

In Table~\ref{table4}, for the problems with $O=100$ orders, our PIO heuristic with $\tau=1$ performs very well, with a percentage deviation of 0.02\% only marginally above that achieved by Cplex, but in an average computation time of 17.31 seconds, as compared to 172.73 seconds for Cplex. Here the appearance of instances with negative percentage deviations for our heuristics indicate that for these instances there are improved feasible solutions that have not been discovered by Cplex within the 300 second time limit imposed.

In  Table~\ref{table150h} and Table~\ref{table200h}, for the problems with $O=150,200$ orders, both our SPB heuristic and our PIO heuristic with $\tau=1$ give an average negative percentage deviation. Computationally the times required are (on average) below those of Cplex. For the largest problem seen in 
Table~\ref{table200h} (with $O=200, N=1000, R=500, P=10, C_p=20$) Cplex did not return any feasible solution within the computational time limit imposed.

\begin{table}[tbp]
\centering
{\tiny
\renewcommand{\tabcolsep}{1mm} 
\renewcommand{\arraystretch}{1.3} 
\begin{tabular}{|c|c|c|crr|rrr|rrr|rrr|}
\hline
\multirow{2}{*}{Instance} & \multirow{2}{*}{$P$} & \multirow{2}{*}{$C_p$} 
& \multicolumn{3}{c|}{Cplex}
& \multicolumn{3}{c|}{SPB}
& \multicolumn{3}{c|}{PIO, $\tau = 1$} 
& \multicolumn{3}{c|}{PIO, $\tau = 2$} 
\\
 &  &  & 
\multicolumn{1}{c}{Optimal or} &
\multicolumn{1}{c}{T(s)} & 
\multicolumn{1}{c|}{B(s)} &
\multicolumn{1}{c}{T(s)} & \multicolumn{1}{c}{Value} & 
\multicolumn{1}{c|}{\% dev} &
\multicolumn{1}{c}{T(s)} & \multicolumn{1}{c}{Value} & 
\multicolumn{1}{c|}{\% dev} &
\multicolumn{1}{c}{T(s)} & \multicolumn{1}{c}{Value} & 
\multicolumn{1}{c|}{\% dev} 
\\
 &  &  
&  \multicolumn{1}{c}{best-known} 
& \multicolumn{1}{c}{ } 
& \multicolumn{1}{c|}{ } 
&  \multicolumn{1}{c}{ } 
& \multicolumn{1}{c}{ } 
& \multicolumn{1}{c|}{ } 
&  \multicolumn{1}{c}{ } 
& \multicolumn{1}{c}{ } 
& \multicolumn{1}{c|}{ } 
&  \multicolumn{1}{c}{ } 
& \multicolumn{1}{c}{ } 
& \multicolumn{1}{c|}{ } 
\\
\hline
\multirow{2}{*}{\begin{tabular}{l}$N = 100$\\$R = 50$\end{tabular}} 
	&	5	&	5	&	5	& 0.09 & 0.08 &	0.06	&	5	&	0.00	&	0.13	&	5	&	0.00	&	0.12	&	5	&	0.00	\\	   
 	&	  	&	7	&	5	& 0.15 & 0.15 &	0.09	&	5	&	0.00	&	0.12	&	5	&	0.00	&	0.12	&	5	&	0.00	\\	   
 	&	  	&	10	&	7	& 34.22 & 34.22 &	0.21	&	8	&	14.29	&	0.77	&	8	&	14.29	&	5.74	&	7	&	0.00	\\	   
\cline{2-15}
 	&	10	&	2	&	10	& 0.08 & 0.03 &	0.09	&	10	&	0.00	&	0.20	&	10	&	0.00	&	0.18	&	10	&	0.00	\\	   
 	&	  	&	5	&	10	& 3.52 & 3.52 &	1.78	&	12	&	20.00	&	0.84	&	10	&	0.00	&	0.62	&	11	&	10.00	\\	   
\hline																																												   
\multirow{2}{*}{\begin{tabular}{l}$N = 200$\\$R = 50$\end{tabular}}
	&	5	&	5	&	5	& 0.13 & 0.10 &	0.03	&	5	&	0.00	&	0.12	&	5	&	0.00	&   0.16	&	5	&	0.00	\\	   
 	&	  	&	7	&	8	& 68.36 & 0.62 &	0.25	&	8	&	0.00	&	0.51	&	8	&	0.00	&	8.41	&	8	&	0.00	\\	   
 	&	  	&	10	&	12	& 4.09 & 3.34 &	0.20	&	12	&	0.00	&	0.68	&	13	&	8.33	&	2.47	&	12	&	0.00	\\	   
\cline{2-15} 
 	&	10	&	2	&	10	& 0.07 & 0.01 &   0.08	&	10	&	0.00	&	0.18	&	10	&	0.00	&	0.11	&	10	&	0.00	\\	   
 	&	  	&	5	&	15	& 91.25 & 86.69 &   11.52	&	16	&	6.67	&	2.17	&	16	&	6.67	&	3.40	&	16	&	6.67	\\	   
\hline																																												   
\multirow{2}{*}{\begin{tabular}{l}$N = 300$\\$R = 50$\end{tabular}}
	&	5	&	5	&	6	& 0.57 & 0.37 &	0.07	&	6	&	0.00	&	0.29	&	6	&	0.00	&	0.17	&	6	&	0.00	\\	   
 	&	  	&	7	&	9	& 1.43 & 0.57 &	0.20	&	9	&	0.00	&	0.58	&	9	&	0.00	&	1.25	&	9	&	0.00	\\	   
 	&	  	&	10	&	16	& 0.82 & 0.70 &	0.20	&	16	&	0.00	&	0.48	&	17	&	6.25	&	1.74	&	17	&	6.25	\\	   
\cline{2-15}
 	&	10	&	2	&	10	& 0.05 & 0.03 &	0.06	&	10	&	0.00	&	0.20	&	10	&	0.00	&	0.12	&	10	&	0.00	\\	   
 	&	  	&	5	&	18	& 49.65 & 46.44 &	12.54	&	19	&	5.56	&	1.00	&	18	&	0.00	&	1.47	&	19	&	5.56	\\	   
\hline																																												   
\multirow{2}{*}{\begin{tabular}{l}$N = 400$\\$R = 75$\end{tabular}}
	&	5	&	5	&	6	& 0.67 & 0.67 &	0.06	&	6	&	0.00	&	0.32	&	7	&	16.67	&	0.31	&	7	&	16.67	\\	   
 	&	  	&	7	&	10	& 4.29 & 0.66 &	0.18	&	10	&	0.00	&	0.63	&	10	&	0.00	&	3.56	&	10	&	0.00	\\	   
 	&	  	&	10	&	19	& 0.40 & 0.40 &	0.08	&	21	&	10.53	&	0.18	&	19	&	0.00	&	0.13	&	19	&	0.00	\\	   
\cline{2-15}
 	&	10	&	2	&	10	& 0.05 & 0.01 &	0.09	&	10	&	0.00	&	0.22	&	10	&	0.00	&	0.15	&	10	&	0.00	\\	   
 	&	  	&	5	&	20	& 4.11 & 4.04 &	0.31	&	22	&	10.00	&	0.79	&	21	&	5.00	&	0.83	&	21	&	5.00	\\	   
\hline																																												   
\multirow{2}{*}{\begin{tabular}{l}$N = 500$\\$R = 75$\end{tabular}} 
	&	5	&	5	&	7	& 0.70 & 0.35 &	0.15	&	7	&	0.00	&	0.50	&	7	&	0.00	&	0.65	&	7	&	0.00	\\	   
 	&	  	&	7	&	10	& 0.85 & 0.85 &   0.29	&	11	&	10.00	&	0.38	&	11	&	10.00	&	1.09	&	11	&	10.00	\\	   
 	&	  	&	10	&	20	& 0.22 & 0.15 &	0.26	&	21	&	5.00	&	0.34	&	20	&	0.00	&	0.30	&	20	&	0.00	\\	   
\cline{2-15}
 	&	10	&	2	&	10	& 0.11 & 0.04 &	0.16	&	10	&	0.00	&	0.32	&	10	&	0.00	&	0.22	&	10	&	0.00	\\	   
 	&	  	&	5	&	23	& 13.23 & 3.01 &	1.84	&	25	&	8.70	&	1.33	&	24	&	4.35	&	0.95	&	24	&	4.35	\\	 

\hline																	   
\multicolumn{3}{|r|}{\textbf{Average:}}	 & &  \textbf{11.16} 
 & \textbf{7.48} 
&
\textbf{1.23}	&	&	\textbf{3.63} &	\textbf{0.53}	&	&	\textbf{2.86}	&	\textbf{1.37}	&	&	\textbf{2.58}\\
\hline

\end{tabular}
}
\caption{Heuristic results: order and rack allocation, $O=50$}
\label{table3}
\end{table}

\begin{table}[tbp]
\centering
{\tiny
\renewcommand{\tabcolsep}{1mm} 
\renewcommand{\arraystretch}{1.3} 
\begin{tabular}{|c|c|c|crr|rrr|rrr|rrr|}
\hline
\multirow{2}{*}{Instance} & \multirow{2}{*}{$P$} & \multirow{2}{*}{$C_p$} 
& \multicolumn{3}{c|}{Cplex}
& \multicolumn{3}{c|}{SPB}
& \multicolumn{3}{c|}{PIO, $\tau = 1$} 
& \multicolumn{3}{c|}{PIO, $\tau = 2$} 
\\
 &  &  & 
\multicolumn{1}{c}{Optimal or} &
\multicolumn{1}{c}{T(s)} & 
\multicolumn{1}{c|}{B(s)} &
\multicolumn{1}{c}{T(s)} & \multicolumn{1}{c}{Value} & 
\multicolumn{1}{c|}{\% dev} &
\multicolumn{1}{c}{T(s)} & \multicolumn{1}{c}{Value} & 
\multicolumn{1}{c|}{\% dev} &
\multicolumn{1}{c}{T(s)} & \multicolumn{1}{c}{Value} & 
\multicolumn{1}{c|}{\% dev} 
\\
 &  &  
&  \multicolumn{1}{c}{best-known} 
& \multicolumn{1}{c}{ } 
& \multicolumn{1}{c|}{ } 
&  \multicolumn{1}{c}{ } 
& \multicolumn{1}{c}{ } 
& \multicolumn{1}{c|}{ } 
&  \multicolumn{1}{c}{ } 
& \multicolumn{1}{c}{ } 
& \multicolumn{1}{c|}{ } 
&  \multicolumn{1}{c}{ } 
& \multicolumn{1}{c}{ } 
& \multicolumn{1}{c|}{ } 
\\
\hline
\multirow{2}{*}{\begin{tabular}{l}$N = 100$\\$R = 100$\end{tabular}} 
	&	5	&	10	&	5	& 0.27 & 0.26 &	0.10	&	5	&	0.00	&	0.23	&	5	&	0.00	&	0.26	&	5	&	0.00	\\	   
 	&	  	&	15	&	5	&	0.78 & 0.65 & 0.16	&	5	&	0.00	&   0.38	&	5	&	0.00	&   0.52	&	5	&	0.00	\\	   
 	&	  	&	20	&	9	&TL & 4.38 &	19.33	&	9	&	0.00	&	12.95	&	9	&	0.00	&	168.34	&	9	&	0.00	\\	   
\cline{2-15}
 	&	10	&	5	&	10	& 0.43 & 0.28	& 0.18	&	10	&	0.00	&	0.56	&	10	&	0.00	&	0.37	&	10	&	0.00	\\	   
 	&	  	&	10	&	11	& TL & 189.30 &	29.27	&	12	&	9.09	&	5.95	&	12	&	9.09	&	46.05	&	12	&	9.09	\\	   
\hline
\multirow{2}{*}{\begin{tabular}{l}$N = 200$\\$R = 100$\end{tabular}}
	&	5	&	10	&	5	&	0.53 & 0.43 & 0.11	&	5	&	0.00	&	0.32	&	5	&	0.00	&	0.30	&	5	&	0.00	\\	   
 	&	  	&	15	&	9	& TL & 5.54 &	0.92	&	9	&	0.00	&	12.27	&	9	&	0.00	&	139.48	&	9	&	0.00	\\	   
 	&	  	&	20	&	18	& TL & 5.45 &	110.73	&	16	&	-11.11	&	28.68	&	16	&	-11.11	&	275.01	&	16	&	-11.11	\\	   
\cline{2-15}
 	&	10	&	5	&	10	&	0.41	&	0.02	&
0.14 & 10 & 0.00
		&   0.61	&	10 & 0.00	&	0.42	&	10	&	0.00	\\	   
 	&	  	&	10	&	24	&  TL & 53.53 &	TL	&	20	&	-16.67	&	44.30	&	19	&	-20.83	&	299.43	&	19	&	-20.83	\\	   
\hline
\multirow{2}{*}{\begin{tabular}{l}$N = 300$\\$R = 100$\end{tabular}}
	&	5	&	10	&	8	&  TL & 1.61 &	0.28	&	8	&	0.00	&	3.76	&	8	&	0.00	&	300.36	&	8	&	0.00	\\	   
 	&	  	&	15	&	11	& TL & 221.00 &	1.02	&	11	&	0.00	&	2.99	&	11	&	0.00	&	41.93	&	11	&	0.00	\\	   
 	&	  	&	20	&	19 & TL & 239.93	&	6.76	&	21	&	10.53	&	9.35	&	21	&	10.53	&	278.77	&	20	&	5.26	\\	   
\cline{2-15}
 	&	10	&	5	&	10	& 0.63 & 0.44 &	0.21	&	10	&	0.00	&   0.66	&	10	&	0.00	&   0.48	&	10	&	0.00	\\	   
 	&	  	&	10	&	28 & TL & 66.61 	&	241.04	&	25	&	-10.71	&	54.74	&	24	&	-14.29	&	240.35	&	23	&	-17.86	\\	   
\hline
\multirow{2}{*}{\begin{tabular}{l}$N = 400$\\$R = 100$\end{tabular}}
	&	5	&	10	&	9	& TL & 1.38 &	0.26	&	9	&	0.00	&	2.04	&	9	&	0.00	&   60.27	&	9	&	0.00	\\	   
 	&	  	&	15	&	13	& 243.56 & 243.56 &	0.32	&	14	&	7.69	&	3.06	&	14	&	7.69	&   83.44	&	13	&	0.00	\\	   
 	&	  	&	20	&	22 & 202.59 & 202.59	&	10.59	&	24	&	9.09	&	17.91	&	24	&	9.09	&   275.08	&	23	&	4.55	\\	   
\cline{2-15}
 	&	10	&	5	&	10	& 0.49 & 0.44 &	0.14	&	10	&	0.00	&   1.06	&	10	&	0.00	&	7.71	&	11	&	10.00	\\	   
 	&	  	&	10	&	29  & TL & 40.86 	&	212.97	&	26	&	-10.34	&	99.30	&	26	&	-10.34	&	196.49	&	26	&	-10.34	\\	   
\hline
\multirow{2}{*}{\begin{tabular}{l}$N = 500$\\$R = 100$\end{tabular}}
	&	5	&	10	&	10	& 169.10 & 2.12 &	0.74	&	10	&	0.00	&	3.23	&	10	&	0.00	&   58.21	&	10	&	0.00	\\	   
 	&	  	&	15	&	15	& 9.19 & 3.04 &	0.37	&	16	&	6.67	&	4.08	&	16	&	6.67	&	50.26	&	16	&	6.67	\\	   
 	&	  	&	20	&	27	& 84.80 & 84.25 &	0.37	&	29	&	7.41	&	8.34	&	27	&	0.00	&	184.82	&	28	&	3.70	\\	   
\cline{2-15}
 	&	10	&	5	&	10	& 5.48 & 5.48 &	1.17	&	11	&	10.00	&   3.86	&	12	&	20.00	&	2.16	&	11	&	10.00	\\	   
 	&	  	&	10	&	33	& TL & 273.72 &	TL	&	33	&	0.00	&	112.12	&	31	&	-6.06	&	298.31	&	32	&	-3.03	\\	   
\hline
\multicolumn{3}{|r|}{\textbf{Average:}}	 & &  \textbf{172.73} 
 & \textbf{65.87} 
&	\textbf{49.50}	&	&	\textbf{0.47} & \textbf{17.31} & & \textbf{0.02} & \textbf{120.35} & & \textbf{-0.56}\\
\hline

\end{tabular}
}
\caption{Heuristic results: order and rack allocation, $O=100$}
\label{table4}
\end{table}

\begin{table}[!htb]
\centering
{\tiny
\renewcommand{\tabcolsep}{1mm} 
\renewcommand{\arraystretch}{1.3} 
\begin{tabular}{|c|c|c|crr|rrr|rrr|rrr|}
\hline
\multirow{2}{*}{Instance} & \multirow{2}{*}{$P$} & \multirow{2}{*}{$C_p$} 
& \multicolumn{3}{c|}{Cplex}
& \multicolumn{3}{c|}{SPB}
& \multicolumn{3}{c|}{PIO, $\tau = 1$} 
& \multicolumn{3}{c|}{PIO, $\tau = 2$} 
\\
 &  &  & 
\multicolumn{1}{c}{Optimal or} &
\multicolumn{1}{c}{T(s)} & 
\multicolumn{1}{c|}{B(s)} &
\multicolumn{1}{c}{T(s)} & \multicolumn{1}{c}{Value} & 
\multicolumn{1}{c|}{\% dev} &
\multicolumn{1}{c}{T(s)} & \multicolumn{1}{c}{Value} & 
\multicolumn{1}{c|}{\% dev} &
\multicolumn{1}{c}{T(s)} & \multicolumn{1}{c}{Value} & 
\multicolumn{1}{c|}{\% dev} 
\\
 &  &  
&  \multicolumn{1}{c}{best-known} 
& \multicolumn{1}{c}{ } 
& \multicolumn{1}{c|}{ } 
&  \multicolumn{1}{c}{ } 
& \multicolumn{1}{c}{ } 
& \multicolumn{1}{c|}{ } 
&  \multicolumn{1}{c}{ } 
& \multicolumn{1}{c}{ } 
& \multicolumn{1}{c|}{ } 
&  \multicolumn{1}{c}{ } 
& \multicolumn{1}{c}{ } 
& \multicolumn{1}{c|}{ } 
\\
\hline
\multirow{2}{*}{\begin{tabular}{l}$N = 100$\\$R = 100$\end{tabular}} 
& 5 & 10 & 5 
&	0.60	&	0.54
&     0.21 & 5 &     0.00 &     0.66 & 5 &     0.00 &     0.51 & 5 &     0.00\\
 &  & 20 & 5 
&	4.61	&	4.61
&     2.20 & 5 &     0.00 &     1.84 & 6 &    20.00 &     2.28 & 6 &    20.00\\
 &  & 30 & 14 
&	TL	&	15.80
&    15.96 & 12 &   -14.29 &    31.35 & 11 &   -21.43 &   275.07 & 12 &   -14.29\\
\cline{2-15}
 & 10 & 10 & 10 
&	3.26	&	3.20
&     1.60 & 10 &     0.00 &     1.64 & 10 &     0.00 &     1.27 & 10 &     0.00\\
 &  & 15 & 15 
&	TL	&	64.58
&   TL & 15 &     0.00 &   TL & 16 &     6.67 &   TL & 16 &     6.67\\
\cline{1-15}
\multirow{2}{*}{\begin{tabular}{l}$N =200$\\$R = 100$\end{tabular}}
& 5 & 10 & 5 
&	0.55	&	0.49
&     0.23 & 5 &     0.00 &     0.61 & 5 &     0.00 &     0.62 & 5 &     0.00\\
 &  & 20 & 10 
&	TL	&	10.56
&    30.58 & 9 &   -10.00 &    72.09 & 9 &   -10.00 &   276.58 & 9 &   -10.00\\
 &  & 30 & 21 
&	TL	&	21.69
&    10.37 & 19 &    -9.52 &   131.26 & 19 &    -9.52 &   276.89 & 18 &   -14.29\\
\cline{2-15}
 & 10 & 10 & 10 
&	7.20	&	5.46
&     1.26 & 10 &     0.00 &     3.10 & 10 &     0.00 &     8.55 & 10 &     0.00\\
 &  & 15 & 23 
&	TL	&	123.19
&   TL& 21 &    -8.70 &   TL & 21 &    -8.70 &   TL& 23 &     0.00\\
\cline{1-15}
\multirow{2}{*}{\begin{tabular}{l}$N = 300$\\$R = 100$\end{tabular}}
 & 5 & 10 & 5 
&	1.89	&	1.75
&     0.24 & 5 &     0.00 &     0.70 & 5 &     0.00 &     0.74 & 5 &     0.00\\
 &  & 20 & 12 
&	TL	&	15.58
&    13.93 & 11 &    -8.33 &    22.42 & 12 &     0.00 &   276.26 & 12 &     0.00\\
 &  & 30 & 29 
&	TL	&	29.99
&     4.69 & 26 &   -10.34 &   207.36 & 25 &   -13.79 &   TL& 25 &   -13.79\\
\cline{2-15}
 & 10 & 10 & 15 
&	TL	&	56.42
&   244.79 & 13 &   -13.33 &    69.39 & 13 &   -13.33 &   293.23 & 14 &    -6.67\\
 &  & 15 & 34 
&	TL	&	108.34 &
   TL& 34 &     0.00 &   TL & 30 &   -11.76 &   TL& 42 &    23.53\\
\cline{1-15}
\multirow{2}{*}{\begin{tabular}{l}$N = 400$\\$R = 150$\end{tabular}}
& 5 & 10 & 5 
&	4.98	&	4.98
&     0.50 & 6 &    20.00 &     1.20 & 6 &    20.00 &     1.06 & 6 &    20.00\\
 &  & 20 & 13 
&	TL	&	17.45
&     8.57 & 12 &    -7.69 &    18.03 & 12 &    -7.69 &   196.67 & 12 &    -7.69\\
 &  & 30 & 29 
&	TL	&	26.51
&     2.03 & 29 &     0.00 &   278.36 & 29 &     0.00 &   276.54 & 29 &     0.00\\
\cline{2-15}
 & 10 & 10 & 17 
&	TL	&	77.45
&   TL& 15 &   -11.76 &   245.52 & 15 &   -11.76 &   TL& 15 &   -11.76\\
 &  & 15 & 33 
&	TL	&	129.53
&   TL& 36 &     9.09 &   278.63 & 30 &    -9.09 &   TL& 46 &    39.39\\
\cline{1-15}
\multirow{2}{*}{\begin{tabular}{l}$N = 500$\\$R = 150$\end{tabular}}
& 5 & 10 & 6 
&	12.47	&	12.47
&     0.51 & 7 &    16.67 &    16.41 & 7 &    16.67 &    49.91 & 7 &    16.67\\
 &  & 20 & 15 
&	TL	&	15.48
&     3.56 & 14 &    -6.67 &     9.09 & 15 &     0.00 &   189.23 & 14 &    -6.67\\
 &  & 30 & 35 
&	TL	&	276.60
&    27.74 & 35 &     0.00 &   259.25 & 33 &    -5.71 &   275.63 & 32 &    -8.57\\
\cline{2-15}
 & 10 & 10 & 18 
&	TL	&	44.29
&   257.29 & 17 &    -5.56 &   266.82 & 17 &    -5.56 &   TL& 18 &     0.00\\
 &  & 15 & 42 
&	TL	&	96.55
&   TL& 37 &   -11.90 &   168.43 & 36 &   -14.29 &   TL& 53 &    26.19\\
\hline
\multicolumn{3}{|r|}{\textbf{Average:}}	 & &  \textbf{205.42} 
 & \textbf{46.54} 
&
\textbf{97.05}	&	&	\textbf{-2.89} &	\textbf{119.37}	&	&	\textbf{-3.17}	&	\textbf{192.04}	&	&	\textbf{2.35}\\
\hline
\end{tabular}
}
\caption{Heuristic results: order and rack allocation, $O=150$}
\label{table150h}
\end{table}

\begin{table}[!htb]
\centering
{\tiny
\renewcommand{\tabcolsep}{1mm} 
\renewcommand{\arraystretch}{1.3} 
\begin{tabular}{|c|c|c|crr|rrr|rrr|rrr|}
\hline
\multirow{2}{*}{Instance} & \multirow{2}{*}{$P$} & \multirow{2}{*}{$C_p$} 
& \multicolumn{3}{c|}{Cplex}
& \multicolumn{3}{c|}{SPB}
& \multicolumn{3}{c|}{PIO, $\tau = 1$} 
& \multicolumn{3}{c|}{PIO, $\tau = 2$} 
\\
 &  &  & 
\multicolumn{1}{c}{Optimal or} &
\multicolumn{1}{c}{T(s)} & 
\multicolumn{1}{c|}{B(s)} &
\multicolumn{1}{c}{T(s)} & \multicolumn{1}{c}{Value} & 
\multicolumn{1}{c|}{\% dev} &
\multicolumn{1}{c}{T(s)} & \multicolumn{1}{c}{Value} & 
\multicolumn{1}{c|}{\% dev} &
\multicolumn{1}{c}{T(s)} & \multicolumn{1}{c}{Value} & 
\multicolumn{1}{c|}{\% dev} 
\\
 &  &  
&  \multicolumn{1}{c}{best-known} 
& \multicolumn{1}{c}{ } 
& \multicolumn{1}{c|}{ } 
&  \multicolumn{1}{c}{ } 
& \multicolumn{1}{c}{ } 
& \multicolumn{1}{c|}{ } 
&  \multicolumn{1}{c}{ } 
& \multicolumn{1}{c}{ } 
& \multicolumn{1}{c|}{ } 
&  \multicolumn{1}{c}{ } 
& \multicolumn{1}{c}{ } 
& \multicolumn{1}{c|}{ } 
\\
\hline

\multirow{2}{*}{\begin{tabular}{l}$N = 100$\\$R = 200$\end{tabular}} 
& 5 & 20 & \multicolumn{1}{c}{5} &     7.52 &     7.52 &     1.11 & 5 &     0.00 &     1.73 & 5 &     0.00 &     2.81 & 5 &     0.00\\
 &  & 30 & \multicolumn{1}{c}{8} & TL &    23.16 &     5.03 & 7 &   -12.50 &   157.28 & 7 &   -12.50 &   158.35 & 7 &   -12.50\\
 &  & 40 & \multicolumn{1}{c}{13} & TL &    38.02 &    18.82 & 13 &     0.00 &   282.83 & 12 &    -7.69 &   275.19 & 14 &     7.69\\
\cline{2-15}
 & 10 & 10 & \multicolumn{1}{c}{10} &     8.29 &     8.23 &     1.66 & 10 &     0.00 &     2.85 & 10 &     0.00 &     2.26 & 10 &     0.00\\
 &  & 20 & \multicolumn{1}{c}{17} & TL &   279.83 &   TL & 17 &     0.00 &   TL & 16 &    -5.88 &   TL & 19 &    11.76\\
\cline{1-15}
\multirow{2}{*}{\begin{tabular}{l}$N = 200$\\$R = 200$\end{tabular}} 
& 5 & 20 & \multicolumn{1}{c}{5} &   109.89 &   109.89 &     4.93 & 6 &    20.00 &     3.31 & 6 &    20.00 &    12.16 & 6 &    20.00\\
 &  & 30 & \multicolumn{1}{c}{11} & TL &    40.21 &     9.18 & 10 &    -9.09 &   139.59 & 9 &   -18.18 &   287.25 & 10 &    -9.09\\
 &  & 40 & \multicolumn{1}{c}{22} & TL &    62.37 &   137.59 & 20 &    -9.09 &   TL & 21 &    -4.55 &   TL & 21 &    -4.55\\
\cline{2-15}
 & 10 & 10 & \multicolumn{1}{c}{10} &    15.81 &    14.40 &     1.48 & 10 &     0.00 &     4.32 & 10 &     0.00 &     3.36 & 10 &     0.00\\
 &  & 20 & \multicolumn{1}{c}{46} & TL &   137.39 &   TL & 27 &   -41.30 &   TL & 26 &   -43.48 &   TL & 58 &    26.09\\
\cline{1-15}
\multirow{2}{*}{\begin{tabular}{l}$N = 300$\\$R = 200$\end{tabular}} 
& 5 & 20 & \multicolumn{1}{c}{10} & TL &    29.71 &   TL & 10 &     0.00 &   TL & 10 &     0.00 &   293.35 & 10 &     0.00\\
 &  & 30 & \multicolumn{1}{c}{14} & TL &    59.21 &    15.00 & 13 &    -7.14 &   109.57 & 13 &    -7.14 &   279.93 & 12 &   -14.29\\
 &  & 40 & \multicolumn{1}{c}{29} & TL &    86.67 &    12.03 & 27 &    -6.90 &   294.90 & 26 &   -10.34 &   285.44 & 26 &   -10.34\\
\cline{2-15}
 & 10 & 10 & \multicolumn{1}{c}{10} &     9.67 &     9.41 &     1.07 & 10 &     0.00 &     4.35 & 10 &     0.00 &     3.86 & 10 &     0.00\\
 &  & 20 & \multicolumn{1}{c}{77} & TL &   135.47 &   TL & 36 &   -53.25 &   TL & 34 &   -55.84 &   TL & 52 &   -32.47\\
\cline{1-15}
\multirow{2}{*}{\begin{tabular}{l}$N = 400$\\$R = 200$\end{tabular}} 
& 5 & 20 & \multicolumn{1}{c}{10} & TL &    17.55 &   TL & 10 &     0.00 &   TL & 10 &     0.00 &   TL & 10 &     0.00\\
 &  & 30 & \multicolumn{1}{c}{16} & TL &    58.22 &     5.71 & 16 &     0.00 &    52.57 & 15 &    -6.25 &   275.55 & 15 &    -6.25\\
 &  & 40 & \multicolumn{1}{c}{34} & TL &    76.71 &    15.17 & 31 &    -8.82 &   TL & 37 &     8.82 &   TL & 30 &   -11.76\\
\cline{2-15}
 & 10 & 10 & \multicolumn{1}{c}{10} &     7.00 &     6.17 &     1.50 & 10 &     0.00 &     4.67 & 10 &     0.00 &     3.23 & 10 &     0.00\\
 &  & 20 & \multicolumn{1}{c}{35} & TL &   214.08 &   TL & 38 &     8.57 &   TL & 38 &     8.57 &   TL & 39 &    11.43\\
\cline{1-15}
\multirow{2}{*}{\begin{tabular}{l}$N = 500$\\$R = 200$\end{tabular}} 
& 5 & 20 & \multicolumn{1}{c}{13} & TL &    18.99 &     3.85 & 12 &    -7.69 &    86.74 & 12 &    -7.69 &   181.14 & 11 &   -15.38\\
 &  & 30 & \multicolumn{1}{c}{22} & TL &    56.03 &     7.94 & 21 &    -4.55 &   128.75 & 20 &    -9.09 &   281.18 & 20 &    -9.09\\
 &  & 40 & \multicolumn{1}{c}{40} & TL &    65.18 &    21.44 & 39 &    -2.50 &   TL & 50 &    25.00 &   283.03 & 40 &     0.00\\
\cline{2-15}
 & 10 & 10 & \multicolumn{1}{c}{15} & TL &   122.65 &    28.15 & 13 &   -13.33 &    90.19 & 13 &   -13.33 &   182.42 & 13 &   -13.33\\
 &  & 20 & \multicolumn{1}{c}{58} & TL &   199.38 &   TL & 43 &   -25.86 &   TL & 44 &   -24.14 &   TL & 54 &  -6.90\\
\cline{1-15}
\multirow{2}{*}{\begin{tabular}{l}$N = 1000$\\$R = 500$\end{tabular}} 
& 5 & 20 & \multicolumn{1}{c}{15} & TL &    53.75 &     2.24 & 15 &     0.00 &   118.29 & 15 &     0.00 &   148.14 & 15 &     0.00\\
 &  & 30 & \multicolumn{1}{c}{27} & TL &   202.89 &     8.67 & 27 &     0.00 &   252.53 & 26 &    -3.70 &   151.27 & 28 &     3.70\\
 &  & 40 & \multicolumn{1}{c}{55} & TL &   251.12 &     5.40 & 51 &    -7.27 &   279.93 & 47 &   -14.55 &   276.57 & 49 &   -10.91\\
\cline{2-15}
 & 10 & 10 & \multicolumn{1}{c}{24} & TL &   170.88 &   TL & 19 &   -20.83 &   TL & 19 &   -20.83 &   TL & 19 &   -20.83\\
 &  & 20 &  & TL &    TL &   TL & 54 &  &   TL & 56 &  &   TL & 60 & \\

\hline
\multicolumn{3}{|r|}{\textbf{Average:}}	 & &  \textbf{245.27} 
 & \textbf{95.17} 
&
\textbf{100.48}	&	&	\textbf{-6.95} &	\textbf{187.15}	&	&	\textbf{-6.99}	&	\textbf{212.88}	&	&	\textbf{-3.35}\\
\hline
\end{tabular}
}
\caption{Heuristic results: order and rack allocation, $O=200$}
\label{table200h}
\end{table}

\subsection{Summary heuristic comparison}
\label{section74}

Table~\ref{table_compare} presents a summary comparison of a direct application of Cplex alone with the heuristics we have proposed in this paper. Since the percentage deviations are all calculated with regard to the best solutions given by Cplex the percentage deviation for Cplex is shown as 0\%. The percentage deviations values given are  the averages as shown in Tables~\ref{table3}-\ref{table200h}. 
Here a negative average \% deviation indicates a result better than that achieved by Cplex. The three values shown in the {\it\textbf{\# instances}} column in Table~\ref{table_compare} are respectively: 
\begin{compactitem}
\item the number of instances in which the corresponding heuristic improved on the solution found by Cplex alone (so found a solution involving fewer racks); labelled {\it\textbf{lt}} (less than) 
\item  the number of instances in  which the corresponding heuristic gave a worse solution than that found by Cplex alone (so found a solution involving more racks); labelled {\it\textbf{gt}}  (greater than)
\item  the number of instances in  which the corresponding heuristic gave the same solution as that found by Cplex alone (so found a solution involving the same number of racks); labelled {\it\textbf{eq}}  (equal) 
\end{compactitem}

\noindent We mentioned above that the key discriminator between just applying Cplex alone and applying any other heuristic is the results obtained. In particular the balance between the computation time required and the quality of results obtained. 
In general terms Table~\ref{table_compare} illustrates that the performance of the heuristics, as compared to Cplex alone, improves as  problem size increases.
This can be seen  both in the increasingly negative percentage deviation values (indicating an increasing reduction in overall rack usage as compared with Cplex alone) and also in the increasing number of instances labelled as {\it\textbf{lt}}, indicating solutions better than those achieved by Cplex alone.
For the 30 largest problems considered in 
Table~\ref{table200h} with $O=200$ orders, as summarised in 
Table~\ref{table_compare},  we have:
\begin{compactitem}
\item Cplex requires an average computation time of $T(s)=245.27$ seconds, with an associated average percentage deviation of 0\% 
\item Cplex requires an average computation time of $B(s)=95.17$ seconds, with an associated average percentage deviation of 0\%, \textbf{\emph{under the best case assumption that we have perfect foresight and can terminate Cplex as soon as it first encounters what would have been the final best solution if we had allowed Cplex to continue to run to completion/time limit}}.
\item We would note here however that this best case (so using $B(s)$) is unrealistic since (as we lack perfect foresight) we in practice, as in the computational results presented, terminate an algorithm  either when it terminates naturally, or when a 
pre-set computational time limit is reached. 
Using the more realistic $T(s)$ measure we have that SPB and PIO ($\tau=1$ and $\tau=2$) all have a lower average computation time than Cplex alone; have negative percentage deviations, so an overall reduction in rack usage; and too lead to a significant number of improved solutions (between 15 and 18 improved solutions out of the 30 instances for which $O=200$).
\item  SPB, requires an average computation time of 100.48 seconds, with an associated average percentage deviation 
of -6.95\%. It gives 16 improved solutions, 2 worse solutions and 12 equal solutions as compared with Cplex alone. So here, comparing SPB with Cplex alone, shows that even under the best case assumption of perfect foresight we can significantly improve upon Cplex alone (giving 16 improved solutions with an overall reduction in rack usage of 6.95\%) for only a slight increase in computation time from $B(s)=95.17$.  

\end{compactitem}

\noindent Considering Table~\ref{table_compare} and comparing SPB with PIO (for $\tau=1$) it is seems reasonable to conclude  that 
PIO is superior for $O=150$,  
SPB is superior for $O=200$, Obviously the balance between these two heuristics could change as we alter the computational time limit and/or use different problem instances. Often in the Operations Research literature the academic focus is  upon choosing 
the~\enquote{best heuristic} for a problem. As such then using Table~\ref{table_compare} we could if necessary choose a single heuristic (balancing quality of result with computation time) from those presented there.

However in this paper we are considering a problem motivated by a practical situation and it is obvious that operating a robotic mobile fulfilment system will be expensive. As such making use of a number of different solution approaches (in parallel) and taking the best solution found after a predefined (elapsed, wall clock) time limit might be a sensible operational strategy that would involve only a very small marginal cost. In other words the focus in a practical situation such as considered in this paper is no longer upon choosing one~\enquote{best} heuristic, but on having an array of heuristics/other approaches to use. 

As an illustration of this the column headed 
{\it\textbf{Parallel}} in Table~\ref{table_compare} 
shows the results we would obtain were
we to run, for 300 seconds, four approaches in parallel. Namely our order and rack allocation formulation using Cplex alone (as in Tables~\ref{table1}-\ref{table200}); SPB and PIO with both $\tau=1$  and $\tau=2$  (as in Tables~\ref{table3}-\ref{table200h}). The values shown are easily deduced from the results presented in those tables.

So for example for $O=200$ if we were to adopt this parallel approach then on termination after 300 seconds of elapsed time we would have (as compared to Cplex alone) better solutions for 20 of the 30 instances in Table~\ref{table200h} and an improvement in average percentage deviation from 0\% to -10.36\%. In other words a reduction in overall rack usage of 10.36\%.
 This is clearly superior to any of the approaches applied individually. 
Even if we were to exclude Cplex alone from this comparison (so just run our three SPB and PIO heuristics in parallel) then on termination after 300 seconds of elapsed time we would have (as compared to Cplex alone) better solutions for 20 of the 30 instances in Table~\ref{table200h} and an improvement in average percentage deviation from 0\% to -9.38\% (so a reduction in overall rack usage of 9.38\%).
This illustrates, in our view, the worth of our SPB and PIO heuristics since these results,  improving upon the results obtained from Cplex alone,
would not be achievable unless we had our heuristics to run in parallel.

\begin{table}[!htb]
\centering
{\tiny
\renewcommand{\tabcolsep}{1mm} 
\renewcommand{\arraystretch}{1.3} 
\begin{tabular}{|c|rrr|rrrrr|rrrrr|rrrrr|rrrr|}
\hline
\multicolumn{1}{|c|}{$O$} &
\multicolumn{3}{c|}{Cplex}
& \multicolumn{5}{c|}{SPB}
& \multicolumn{5}{c|}{PIO, $\tau = 1$} 
& \multicolumn{5}{c|}{PIO, $\tau = 2$} 
& \multicolumn{4}{c|}{Parallel}
\\
\multicolumn{1}{|c|}{ }
&  \multicolumn{1}{c}{T(s)} &
\multicolumn{1}{c}{B(s)} & 
\multicolumn{1}{c|}{\% dev} &
\multicolumn{1}{c}{T(s)} & \multicolumn{1}{c}{\% dev} 
& \multicolumn{3}{c|}{\# instances} &
\multicolumn{1}{c}{T(s)} & \multicolumn{1}{c}{\% dev} &
 \multicolumn{3}{c|}{\# instances} &
\multicolumn{1}{c}{T(s)} & \multicolumn{1}{c}{\% dev} 
& \multicolumn{3}{c|}{\# instances} 
& \multicolumn{1}{c}{\% dev} 
& \multicolumn{3}{c|}{\# instances} 
\\	& 	& 
&	&	& 	& lt &  gt &  eq & 	& & lt &  gt &  eq 
&		& & lt &  gt &  eq 
&  & lt &  gt &  eq 
\\
\hline
50	& 11.16	
& 7.48 
&	0	&1.23	& 3.63	& 0 & 9 &16 & 0.53	& 2.86  & 0 &13 &12 &	1.37	& 2.58  & 0 &9 &16 
& 0 & 0 &0& 25
\\
100	& 172.73  
&	65.87	
& 0	& 49.50	& 0.47 &4 &7 &14	& 17.31	& 0.02 & 5	&6 &14 & 120.35& 	-0.56 & 5 &7 &13
&  -2.65 & 5& 0 &20
\\
150	& 205.42 
&	46.54	
& 0	& 97.05	& -2.89 & 12 &3 &10	& 119.37	& -3.17	& 13 &4& 8 & 192.04& 2.35 & 9 &7 &9
& -6.61 & 15 &0 &10
\\
200 & 245.27 & 95.17 & 0 & 100.48 & -6.95 & 16 &2 &12 & 187.15 & -6.99 & 
18 &4 &8 & 212.88 & -3.35 & 15 &6 &9 & -10.36 & 20 &0 &10 \\
\hline

\end{tabular}
}
\caption{Heuristic results: summary comparison}
\label{table_compare}
\end{table}

\subsection{Picker rack sequencing}
\label{section75}

Recall that we treat the picker rack sequencing problem as a feasibility problem, so for a given set of racks we have to find a feasible rack sequence with which we can fulfil all of the orders assigned to the single picker, whilst having only $B$ bin positions available. It is clear that (in general, for a fixed number of orders) this problem will be more challenging the larger the number of racks assigned to the picker.

In order to investigate PRSF and PRSF1 we took the instances shown in 
Tables~\ref{table1}-\ref{table150} and solved them to proven optimality for $P=1$, so a single picker, but for a much larger range of $C_p$ values than considered in
Tables~\ref{table1}-\ref{table150}. 
Given the solution to an instance for $P=1$ and a given $C_p$ value we applied our picker rack sequencing formulations in order to find (if possible) a feasible rack sequence given $B$ bin positions.

\subsubsection{Rack sequencing with no revisits,  PRSF}

The results for PRSF for problems with $O=50,100,150$ orders are shown in Tables~\ref{table5a}-\ref{table150e}. These tables deal with the case where we assume
 that each  rack only visits the picker once (so no rack revisits to the picker are allowed).

In these tables the value for $K$ is the number of racks that have to be sequenced for each instance shown. The values shown in the body of these tables are the computation time (in seconds) required to solve the  problem. 
In these tables we give computation times to two decimal places, so a time of 0.00 means that the time required was less than 0.005 seconds.
A value in square brackets indicates that the tree search terminated (in the time shown) having proved that there was no feasible rack sequence given the orders and racks allocated to the single picker. If TL is shown then it indicates that the problem terminated at time limit (5 CPU minutes). In such cases we have not found a feasible rack sequence, but neither have we proved that the problem is infeasible, so its status remains open (i.e.~there may, or may not, be a feasible rack sequence). 

To clarify the meaning of these tables consider the problem $N=400$, $R=150$ and $C_p=90$ with $K=9$ in Table~\ref{table150e}. This problem involves $N=400$ products  and requires $K=9$ racks to be appropriately sequenced so that the $C_p=90$ orders allocated to the single picker can be fulfilled.
This value of $K=9$ was derived by solving to proven optimality our order and rack allocation formulation for a single picker required to deal with 90 orders and represents the minimum  number of racks that can be used to supply all of the products required for the orders allocated to the picker.

For this  problem $N=400$, $R=150$ and $C_p=90$  with $K=9$ in Table~\ref{table150e} then for $B{=}2$, so two bin positions, no feasible rack sequence exists, and this was proved in 4.75 seconds. For $B{=}3$  a feasible rack sequence was found in 0.44 seconds, for $B{=}4$ a feasible rack sequence was found in 0.91 seconds, etc. These times exclude the time taken to solve the initial problem with $P=1$ to find the orders and racks allocated to the single picker. 

Note  here that all of the problems shown in these tables were solved independently. By logic we know that if there is a feasible rack sequence for $B$ bin positions then there is also a feasible rack sequence for $B{+}1$ bin positions (simply leave the extra bin position unused). However we made no use of the solution for $B$ bin positions in solving for $B{+}1$ bin positions in the results shown.

Tables~\ref{table5a}-\ref{table150e} indicate that, as we might expect, PRSF generally becomes easier to solve as the number of bin positions $B$ increases, i.e.~it requires less time to find a feasible rack sequence as the number of bins increases.
  Taken together Tables~\ref{table5a}-\ref{table150e}  contain some 720 problems involving up to $N=500$ products, $O=150$ orders and $K=29$ racks to be sequenced. 

Examining these tables we believe that, for the problems considered, PRSF is very effective at finding feasible rack sequences. In particular note the low computation times seen for many of the 720 problems considered. Moreover  for all but a few instances  we found a feasible rack sequence with $B \geq 4$ where each  rack only visits the picker once, so no rack revisits to the picker were needed to achieve feasibility.

\begin{table}[tbp]
\centering
{\tiny
\renewcommand{\tabcolsep}{1mm} 
\renewcommand{\arraystretch}{1.3} 
\begin{tabular}{|c|c|c|rrrrrrrrr|}
\hline
Instance & $C_p$ & $K$ & \multicolumn{1}{c}{$B {=} 2$} & \multicolumn{1}{c}{$B {=} 3$} & \multicolumn{1}{c}{$B {=} 4$} & \multicolumn{1}{c}{$B {=} 5$} & \multicolumn{1}{c}{$B {=} 6$} & \multicolumn{1}{c}{$B {=} 7$} & \multicolumn{1}{c}{$B {=} 8$} & \multicolumn{1}{c}{$B {=} 9$} & \multicolumn{1}{c|}{$B {=} 10$}\\
\hline
\multirow{2}{*}{\begin{tabular}{l}$N = 100$\\$R = 50$\end{tabular}} 
& 20 & 2 &     0.00 &     0.00 &     0.00 &     0.00 &     0.00 &     0.00 &     0.00 &     0.00 &     0.00\\
 & 30 & 3 &     0.01 &     0.01 &     0.01 &     0.01 &     0.01 &     0.01 &     0.01 &     0.01 &     0.01\\
 & 40 & 4 &     0.02 &     0.01 &     0.01 &     0.01 &     0.01 &     0.01 &     0.01 &     0.01 &     0.01\\
 & 50 & 7 &     1.50 &     0.27 &     0.34 &     0.28 &     0.29 &     0.04 &     0.04 &     0.16 &     0.16\\
\hline
\multirow{2}{*}{\begin{tabular}{l}$N = 200$\\$R = 50$\end{tabular}} 
& 20 & 3 &     0.00 &     0.00 &     0.00 &     0.00 &     0.00 &     0.00 &     0.00 &     0.00 &     0.00\\
 & 30 & 5 &     0.01 &     0.01 &     0.01 &     0.01 &     0.01 &     0.02 &     0.01 &     0.01 &     0.02\\
 & 40 & 7 &     1.06 &     0.30 &     0.18 &     0.03 &     0.03 &     0.03 &     0.03 &     0.03 &     0.03\\
 & 50 & 11 &    25.69 &     3.06 &     2.57 &     1.05 &     0.80 &     0.82 &     0.81 &     0.82 &     0.81\\
\hline
\multirow{2}{*}{\begin{tabular}{l}$N = 300$\\$R = 50$\end{tabular}} 
& 20 & 4 &     0.01 &     0.01 &     0.01 &     0.01 &     0.01 &     0.01 &     0.01 &     0.01 &     0.01\\
 & 30 & 6 &     0.02 &     0.02 &     0.02 &     0.02 &     0.02 &     0.02 &     0.02 &     0.02 &     0.02\\
 & 40 & 9 &     2.47 &     0.87 &     0.34 &     0.33 &     0.34 &     0.34 &     0.34 &     0.33 &     0.33\\
 & 50 & 15 &     3.41 &     3.17 &     1.21 &     1.21 &     1.35 &     0.56 &     0.58 &     0.58 &     0.58\\
\hline
\multirow{2}{*}{\begin{tabular}{l}$N = 400$\\$R = 75$\end{tabular}} 
& 20 & 4 &     0.01 &     0.01 &     0.01 &     0.01 &     0.01 &     0.01 &     0.01 &     0.01 &     0.01\\
 & 30 & 7 &     0.02 &     0.02 &     0.02 &     0.02 &     0.02 &     0.02 &     0.02 &     0.02 &     0.02\\
 & 40 & 11 &     0.39 &     0.04 &     0.04 &     0.04 &     0.04 &     0.04 &     0.04 &     0.29 &     0.26\\
 & 50 & 19 &     5.38 &     3.09 &     1.43 &     1.42 &     1.57 &     1.57 &     1.57 &     0.79 &     0.79\\
\hline
\multirow{2}{*}{\begin{tabular}{l}$N = 500$\\$R = 75$\end{tabular}} 
& 20 & 4 &     0.01 &     0.01 &     0.01 &     0.01 &     0.01 &     0.01 &     0.01 &     0.01 &     0.01\\
 & 30 & 8 &     0.09 &     0.09 &     0.09 &     0.09 &     0.03 &     0.03 &     0.03 &     0.03 &     0.03\\
 & 40 & 12 &     1.13 &     0.88 &     0.47 &     0.27 &     0.28 &     0.28 &     0.28 &     0.47 &     0.28\\
 & 50 & 20 &    36.51 &    31.82 &     4.98 &     3.64 &     3.61 &     1.17 &     1.16 &     1.17 &     1.16\\
\hline
\multicolumn{3}{r}{\textbf{Average:}} & \multicolumn{1}{r}{\textbf{    3.89}} & \multicolumn{1}{r}{\textbf{    2.18}} & \multicolumn{1}{r}{\textbf{    0.59}} & \multicolumn{1}{r}{\textbf{    0.42}} & \multicolumn{1}{r}{\textbf{    0.42}} & \multicolumn{1}{r}{\textbf{    0.25}} & \multicolumn{1}{r}{\textbf{    0.25}} & \multicolumn{1}{r}{\textbf{    0.24}} & \multicolumn{1}{r}{\textbf{    0.23}}
\\
\end{tabular}
}
\caption{Results: picker rack sequencing, no revisits allowed, $O=50$ orders, PRSF }
\label{table5a}
\end{table}

\begin{table}[tbp]
\centering
{\tiny
\renewcommand{\tabcolsep}{1mm} 
\renewcommand{\arraystretch}{1.3} 
\begin{tabular}{|c|c|c|rrrrrrrrr|}
\hline
Instance & $C_p$ & $K$ & \multicolumn{1}{r}{$B {=} 2$} & \multicolumn{1}{r}{$B {=} 3$} & \multicolumn{1}{r}{$B {=} 4$} & \multicolumn{1}{r}{$B {=} 5$} & \multicolumn{1}{r}{$B {=} 6$} & \multicolumn{1}{r}{$B {=} 7$} & \multicolumn{1}{r}{$B {=} 8$} & \multicolumn{1}{r}{$B {=} 9$} & \multicolumn{1}{r|}{$B {=} 10$}\\
\hline
\multirow{2}{*}{\begin{tabular}{l}$N = 100$\\$R = 100$\end{tabular}} 
& 40 & 2 &     0.00 &     0.00 &     0.00 &     0.00 &     0.00 &     0.00 &     0.00 &     0.00 &     0.00\\
 & 50 & 2 &     0.00 &     0.00 &     0.00 &     0.00 &     0.00 &     0.00 &     0.00 &     0.00 &     0.00\\
 & 60 & 3 &     0.01 &     0.01 &     0.01 &     0.01 &     0.01 &     0.01 &     0.01 &     0.01 &     0.01\\
 & 70 & 4 &     0.05 &     0.02 &     0.02 &     0.02 &     0.02 &     0.02 &     0.02 &     0.02 &     0.02\\
 & 80 & 4 & [0.06] & [0.13] &     0.09 &     0.02 &     0.02 &     0.02 &     0.02 &     0.02 &     0.02\\
 & 90 & 5 & [0.13] & [0.41] &     0.50 &     0.34 &     0.19 &     0.03 &     0.03 &     0.03 &     0.03\\
 & 100 & 7 & [17.70] & [75.43] &     7.01 &     7.16 &     0.62 &     0.55 &     0.55 &     0.55 &     0.55\\
\hline
\multirow{2}{*}{\begin{tabular}{l}$N = 200$\\$R = 100$\end{tabular}} 
& 40 & 3 &     0.01 &     0.01 &     0.01 &     0.01 &     0.01 &     0.01 &     0.01 &     0.01 &     0.01\\
 & 50 & 4 & [0.05] &     0.01 &     0.01 &     0.01 &     0.01 &     0.01 &     0.01 &     0.01 &     0.01\\
 & 60 & 5 & [0.18] & [0.64] &     0.22 &     0.16 &     0.03 &     0.03 &     0.03 &     0.03 &     0.03\\
 & 70 & 6 & [0.37] & [1.34] &     0.93 &     0.44 &     0.38 &     0.39 &     0.30 &     0.29 &     0.32\\
 & 80 & 8 & [12.51] &     2.68 &     0.66 &     0.97 &     1.01 &     0.56 &     0.67 &     0.57 &     0.57\\
 & 90 & 10 &    16.90 &     8.94 &     6.30 &     3.03 &     1.66 &     1.89 &     1.56 &     1.82 &     1.55\\
 & 100 & 13 & TL & TL &    52.11 &    35.21 &    37.96 &    36.01 &     9.78 &    19.38 &     7.40\\
\hline
\multirow{2}{*}{\begin{tabular}{l}$N = 300$\\$R = 100$\end{tabular}} 
& 40 & 5 &     0.09 &     0.07 &     0.02 &     0.02 &     0.02 &     0.02 &     0.02 &     0.02 &     0.02\\
 & 50 & 6 & [0.79] &     0.35 &     0.16 &     0.03 &     0.03 &     0.03 &     0.03 &     0.03 &     0.03\\
 & 60 & 7 &     0.32 &     0.26 &     0.29 &     0.27 &     0.27 &     0.27 &     0.27 &     0.27 &     0.27\\
 & 70 & 9 &     5.95 &     2.66 &     1.93 &     0.36 &     0.36 &     0.66 &     0.66 &     0.66 &     0.66\\
 & 80 & 11 &    21.17 &     3.46 &     3.68 &     2.74 &     1.13 &     1.12 &     1.13 &     1.12 &     1.12\\
 & 90 & 13 & TL &    21.12 &    35.22 &    10.18 &     2.97 &     6.87 &     3.04 &     2.26 &     2.26\\
 & 100 & 18 & TL & TL & TL &    86.86 &    93.68 &   208.08 &    50.48 &    42.55 &    54.67\\
\hline
\multirow{2}{*}{\begin{tabular}{l}$N = 400$\\$R = 100$\end{tabular}} 
& 40 & 6 &     0.02 &     0.02 &     0.02 &     0.02 &     0.02 &     0.02 &     0.02 &     0.02 &     0.02\\
 & 50 & 7 & [2.08] &     0.45 &     0.23 &     0.04 &     0.04 &     0.04 &     0.04 &     0.04 &     0.04\\
 & 60 & 9 &     0.77 &     0.35 &     0.35 &     0.35 &     0.34 &     0.34 &     0.34 &     0.34 &     0.35\\
 & 70 & 11 &     5.32 &     3.58 &     2.05 &     0.69 &     0.68 &     0.70 &     0.70 &     0.70 &     0.71\\
 & 80 & 13 & TL & TL &   131.27 &     7.27 &     7.35 &     2.49 &     2.86 &     2.48 &     2.48\\
 & 90 & 17 & TL & TL & TL &    46.90 &    76.13 &    28.49 &    33.58 &    21.99 &    12.37\\
 & 100 & 22 & TL & TL & TL &   108.12 &   173.21 &    98.03 &    29.68 &    29.32 &    29.95\\
\hline
\multirow{2}{*}{\begin{tabular}{l}$N = 500$\\$R = 100$\end{tabular}} 
& 40 & 6 &     0.08 &     0.09 &     0.08 &     0.08 &     0.08 &     0.08 &     0.08 &     0.08 &     0.08\\
 & 50 & 8 &     0.25 &     0.22 &     0.16 &     0.16 &     0.16 &     0.15 &     0.20 &     0.15 &     0.15\\
 & 60 & 10 & [7.04] &     1.87 &     0.54 &     0.41 &     0.55 &     0.31 &     0.31 &     0.31 &     0.31\\
 & 70 & 13 & TL &   211.68 &     5.75 &     4.50 &     2.82 &     2.19 &     1.34 &     1.26 &     1.26\\
 & 80 & 16 & TL &    13.82 &    14.33 &     9.96 &     4.82 &     3.74 &     2.12 &     1.09 &     1.10\\
 & 90 & 20 & TL & TL &    57.88 &    62.29 &    75.56 &    53.75 &    28.37 &    38.53 &    27.28\\
 & 100 & 26 & TL & TL & TL &   188.06 &   163.21 &   101.29 &   195.32 &   125.53 &   116.30\\
\hline
\multicolumn{3}{r}{\textbf{Average:}} & \multicolumn{1}{r}{\textbf{   88.34}} & \multicolumn{1}{r}{\textbf{   69.99}} & \multicolumn{1}{r}{\textbf{   43.48}} & \multicolumn{1}{r}{\textbf{   16.48}} & \multicolumn{1}{r}{\textbf{   18.44}} & \multicolumn{1}{r}{\textbf{   15.66}} & \multicolumn{1}{r}{\textbf{   10.39}} & \multicolumn{1}{r}{\textbf{    8.33}} & \multicolumn{1}{r}{\textbf{    7.48}}\\
\end{tabular}
}
\caption{Results: picker rack sequencing, no revisits allowed, $O=100$ orders, PRSF}
\label{table6a}
\end{table}

\begin{table}[tbp]
\centering
{\tiny
\renewcommand{\tabcolsep}{1mm} 
\renewcommand{\arraystretch}{1.3} 
\begin{tabular}{|c|c|c|rrrrrrrrr|}
\hline
Instance & $C_p$ & $K$ & \multicolumn{1}{r}{$B {=} 2$} & \multicolumn{1}{r}{$B {=} 3$} & \multicolumn{1}{r}{$B {=} 4$} & \multicolumn{1}{r}{$B {=} 5$} & \multicolumn{1}{r}{$B {=} 6$} & \multicolumn{1}{r}{$B {=} 7$} & \multicolumn{1}{r}{$B {=} 8$} & \multicolumn{1}{r}{$B {=} 9$} & \multicolumn{1}{r|}{$B {=} 10$}\\
\hline
\multirow{2}{*}{\begin{tabular}{l}$N = 100$\\$R = 100$\end{tabular}} & 70 & 3 & [0.10] &     0.01 &     0.01 &     0.01 &     0.01 &     0.01 &     0.01 &     0.01 &     0.01\\
 & 90 & 4 & [0.13] & [0.62] &     0.69 &     0.69 &     0.43 &     0.03 &     0.03 &     0.03 &     0.03\\
 & 110 & 5 & [0.98] & [2.58] & [3.23] & [7.31] &     0.78 &     0.95 &     0.59 &     0.45 &     0.80\\
 & 130 & 7 & [16.94] & [66.75] &     9.63 &    13.20 &     8.57 &    12.62 &     4.67 &     6.13 &     2.79\\
 & 150 & 10 & TL & TL &   291.61 &    58.97 &   101.76 &    51.00 &    23.82 &    28.04 &    18.06\\
\hline
\multirow{2}{*}{\begin{tabular}{l}$N = 200$\\$R = 100$\end{tabular}} & 70 & 5 &     0.13 &     0.16 &     0.12 &     0.09 &     0.03 &     0.03 &     0.03 &     0.03 &     0.03\\
 & 90 & 6 & [0.22] & [1.30] &     0.87 &     1.00 &     0.54 &     0.39 &     0.36 &     0.40 &     0.22\\
 & 110 & 8 & [2.28] & [33.92] & [77.11] &     3.21 &     3.89 &     1.32 &     1.07 &     0.90 &     0.98\\
 & 130 & 11 & TL & TL & TL &    15.65 &    15.03 &    70.80 &    13.15 &    13.50 &     7.32\\
 & 150 & 16 & TL & TL & TL & TL &   248.77 &   192.10 &   253.36 &   124.41 &   106.64\\
\hline
\multirow{2}{*}{\begin{tabular}{l}$N = 300$\\$R = 100$\end{tabular}}  & 70 & 6 & [0.20] &     0.14 &     0.15 &     0.15 &     0.15 &     0.14 &     0.15 &     0.14 &     0.14\\
 & 90 & 9 & [51.07] &     2.58 &     2.87 &     0.58 &     0.57 &     0.54 &     0.55 &     0.55 &     0.55\\
 & 110 & 12 & TL & TL &    15.20 &   154.79 &    11.29 &     4.57 &     4.13 &     3.42 &     3.95\\
 & 130 & 15 & TL & TL & TL & TL & TL & TL &    55.54 &   117.03 &    34.59\\
 & 150 & 22 & TL & TL & TL & TL & TL & TL & TL & TL & TL\\
\hline
\multirow{2}{*}{\begin{tabular}{l}$N = 400$\\$R = 150$\end{tabular}} & 70 & 7 & [0.48] &     0.46 &     0.22 &     0.24 &     0.04 &     0.04 &     0.04 &     0.04 &     0.04\\
 & 90 & 9 & [4.75] &     0.44 &     0.91 &     0.43 &     0.43 &     0.43 &     0.30 &     0.30 &     0.29\\
 & 110 & 12 & TL &   215.15 &    10.27 &     5.19 &     6.34 &     4.56 &     3.93 &     3.78 &     3.77\\
 & 130 & 17 & TL & TL & TL & TL &    48.30 &    26.15 &    15.66 &    14.27 &    30.89\\
 & 150 & 24 & TL & TL & TL & TL & TL &   264.02 & TL & TL & TL\\
\hline
\multirow{2}{*}{\begin{tabular}{l}$N = 500$\\$R = 150$\end{tabular}} & 70 & 8 &     0.28 &     0.25 &     0.20 &     0.20 &     0.26 &     0.26 &     0.26 &     0.24 &     0.20\\
 & 90 & 11 & [194.23] &     1.50 &     0.66 &     0.99 &     0.63 &     0.62 &     0.64 &     0.64 &     0.64\\
 & 110 & 15 & TL & TL &    14.28 &     3.98 &     4.72 &     6.19 &     2.66 &     4.24 &     2.22\\
 & 130 & 20 & TL & TL & TL & TL & TL &    75.10 &    54.37 & TL &    44.70\\
 & 150 & 29 & TL & TL & TL & TL & TL & TL & TL & TL & TL\\\hline
\multicolumn{3}{r}{\textbf{Average:}} & \multicolumn{1}{r}{\textbf{  154.87}} & \multicolumn{1}{r}{\textbf{  145.03}} & \multicolumn{1}{r}{\textbf{  113.12}} & \multicolumn{1}{r}{\textbf{   94.67}} & \multicolumn{1}{r}{\textbf{   78.10}} & \multicolumn{1}{r}{\textbf{   64.47}} & \multicolumn{1}{r}{\textbf{   53.41}} & \multicolumn{1}{r}{\textbf{   60.74}} & \multicolumn{1}{r}{\textbf{   46.35}}
\\
\end{tabular}
}
\caption{Results: picker rack sequencing, no revisits allowed, $O=150$ orders, PRSF}
\label{table150e}
\end{table}

\subsubsection{Rack sequencing with no revisits, PRSF1}

The results for problems with $O=50,100,150$ orders are shown in Tables~\ref{table5}-\ref{table150org} respectively. These tables deal with the case where we assume
 that each  rack only visits the picker once (so no rack revisits to the picker are allowed)
and were produced using the (no revisit) rack sequencing formulation (PRSF1) given above, with Theorem~\ref{jebth2} applied.  It is clear that (on average) for the larger problems with $O=100,150$  PRSF1 requires less computation time than PRSF as $B$ increases.

As discussed above, in our view the rack sequencing problem is essentially a feasibility problem in that we assume that a primary optimisation has already been performed in terms of choosing which orders and racks are allocated to a picker. Hence finding a feasible rack sequence such that all orders can be picked using $B$ bin positions is the principal concern.
In terms of rack sequencing therefore we believe that a similar situation as for order and rack allocation applies, namely that it is worthwhile making use of a number of different solution approaches (in parallel) and stopping as soon as a feasible rack sequence  is found.

As an illustration of this parallel approach 
Table~\ref{tableseqcom} shows the results that would be obtained from PRSF and PRSF1 if they are run separately and if they are run in parallel (terminating as soon as a feasible rack sequence is found by either formulation, or at time limit). In general terms we can see that the parallel approach combines the best of both formulations, achieving feasible solutions (if possible) for $B$ small by making use of PRSF and solving more quickly as $B$ increases by making use of  PRSF1 (with Theorem~\ref{jebth2} applied)  which requires fewer variables/constraints.

\begin{table}[tbp]
\centering
{\tiny
\renewcommand{\tabcolsep}{1mm} 
\renewcommand{\arraystretch}{1.3} 
\begin{tabular}{|c|c|c|rrrrrrrrr|}
\hline
Instance & $C_p$ & $K$ & \multicolumn{1}{c}{$B {=} 2$} & \multicolumn{1}{c}{$B {=} 3$} & \multicolumn{1}{c}{$B {=} 4$} & \multicolumn{1}{c}{$B {=} 5$} & \multicolumn{1}{c}{$B {=} 6$} & \multicolumn{1}{c}{$B {=} 7$} & \multicolumn{1}{c}{$B {=} 8$} & \multicolumn{1}{c}{$B {=} 9$} & \multicolumn{1}{c|}{$B {=} 10$}\\
\hline
\multirow{2}{*}{\begin{tabular}{l}$N = 100$\\$R = 50$\end{tabular}} & 20 & 2 &     0.00 &     0.00 &     0.00 &     0.00 &     0.00 &     0.00 &     0.00 &     0.00 &     0.00\\
 & 30 & 3 & [0.01] &     0.00 &     0.00 &     0.00 &     0.00 &     0.00 &     0.00 &     0.00 &     0.00\\
 & 40 & 4 & [0.02] &     0.01 &     0.01 &     0.00 &     0.00 &     0.00 &     0.00 &     0.00 &     0.00\\
 & 50 & 7 & [1.65] &     0.78 &     0.45 &     0.18 &     0.14 &     0.17 &     0.17 &     0.17 &     0.17\\
\hline
\multirow{2}{*}{\begin{tabular}{l}$N = 200$\\$R = 50$\end{tabular}} & 20 & 3 & [0.00] &     0.00 &     0.00 &     0.00 &     0.00 &     0.00 &     0.00 &     0.00 &     0.00\\
 & 30 & 5 & [0.07] &     0.01 &     0.01 &     0.01 &     0.01 &     0.01 &     0.01 &     0.01 &     0.01\\
 & 40 & 7 & [0.24] &     0.11 &     0.17 &     0.09 &     0.10 &     0.01 &     0.01 &     0.01 &     0.01\\
 & 50 & 11 & [156.16] &     1.96 &     0.54 &     0.86 &     1.07 &     0.94 &     0.68 &     1.06 &     0.56\\
\hline
\multirow{2}{*}{\begin{tabular}{l}$N = 300$\\$R = 50$\end{tabular}} & 20 & 4 &     0.00 &     0.00 &     0.00 &     0.00 &     0.00 &     0.00 &     0.00 &     0.00 &     0.00\\
 & 30 & 6 & [0.09] &     0.01 &     0.01 &     0.01 &     0.01 &     0.01 &     0.01 &     0.01 &     0.01\\
 & 40 & 9 & [0.08] &     1.41 &     0.36 &     0.30 &     0.19 &     0.17 &     0.09 &     0.09 &     0.09\\
 & 50 & 15 & TL &     3.57 &     1.02 &     0.63 &     0.75 &     0.75 &     0.75 &     0.75 &     0.75\\
\hline
\multirow{2}{*}{\begin{tabular}{l}$N = 400$\\$R = 75$\end{tabular}} & 20 & 4 &     0.00 &     0.00 &     0.00 &     0.00 &     0.00 &     0.00 &     0.00 &     0.00 &     0.00\\
 & 30 & 7 &     0.01 &     0.01 &     0.01 &     0.01 &     0.01 &     0.01 &     0.01 &     0.01 &     0.01\\
 & 40 & 11 & [4.02] &     0.19 &     0.12 &     0.12 &     0.12 &     0.12 &     0.13 &     0.13 &     0.13\\
 & 50 & 19 &     2.98 &     0.89 &     0.75 &     0.81 &     0.29 &     0.29 &     0.92 &     0.31 &     0.29\\
\hline
\multirow{2}{*}{\begin{tabular}{l}$N = 500$\\$R = 75$\end{tabular}} & 20 & 4 &     0.00 &     0.00 &     0.00 &     0.00 &     0.00 &     0.00 &     0.00 &     0.00 &     0.00\\
 & 30 & 8 &     0.07 &     0.01 &     0.01 &     0.01 &     0.01 &     0.01 &     0.01 &     0.01 &     0.01\\
 & 40 & 12 & [75.25] &     0.82 &     0.42 &     0.22 &     0.22 &     0.13 &     0.13 &     0.13 &     0.14\\
 & 50 & 20 &     0.01 &     5.26 &     3.75 &     3.52 &     1.95 &     1.70 &     1.72 &     1.44 &     1.52\\
\hline
\multicolumn{3}{r}{\textbf{Average:}} & \multicolumn{1}{r}{\textbf{   27.03}} & \multicolumn{1}{r}{\textbf{    0.75}} & \multicolumn{1}{r}{\textbf{    0.38}} & \multicolumn{1}{r}{\textbf{    0.34}} & \multicolumn{1}{r}{\textbf{    0.24}} & \multicolumn{1}{r}{\textbf{    0.22}} & \multicolumn{1}{r}{\textbf{    0.23}} & \multicolumn{1}{r}{\textbf{    0.21}} & \multicolumn{1}{r}{\textbf{    0.19}}\\
\end{tabular}
}
\caption{Results: picker rack sequencing, no revisits allowed, $O=50$ orders, PRSF1}
\label{table5}
\end{table}

\begin{table}[tbp]
\centering
{\tiny
\renewcommand{\tabcolsep}{1mm} 
\renewcommand{\arraystretch}{1.3} 
\begin{tabular}{|c|c|c|rrrrrrrrr|}
\hline
Instance & $C_p$ & $K$ & \multicolumn{1}{r}{$B {=} 2$} & \multicolumn{1}{r}{$B {=} 3$} & \multicolumn{1}{r}{$B {=} 4$} & \multicolumn{1}{r}{$B {=} 5$} & \multicolumn{1}{r}{$B {=} 6$} & \multicolumn{1}{r}{$B {=} 7$} & \multicolumn{1}{r}{$B {=} 8$} & \multicolumn{1}{r}{$B {=} 9$} & \multicolumn{1}{r|}{$B {=} 10$}\\
\hline
\multirow{2}{*}{\begin{tabular}{l}$N = 100$\\$R = 100$\end{tabular}} & 40 & 2 &     0.00 &     0.00 &     0.00 &     0.00 &     0.00 &     0.00 &     0.00 &     0.00 &     0.00\\
 & 50 & 2 &     0.00 &     0.00 &     0.00 &     0.00 &     0.00 &     0.00 &     0.00 &     0.00 &     0.00\\
 & 60 & 3 & [0.00] &     0.00 &     0.00 &     0.00 &     0.00 &     0.00 &     0.00 &     0.00 &     0.00\\
 & 70 & 4 & [0.06] &     0.00 &     0.00 &     0.00 &     0.00 &     0.00 &     0.00 &     0.00 &     0.00\\
 & 80 & 4 & [0.02] & [0.02] & [0.06] &     0.03 &     0.03 &     0.01 &     0.01 &     0.01 &     0.01\\
 & 90 & 5 & [0.05] & [0.05] & [0.20] &     0.12 &     0.06 &     0.06 &     0.06 &     0.04 &     0.04\\
 & 100 & 7 & [0.63] & [4.41] & [49.28] &     2.48 &     0.69 &     0.34 &     0.28 &     0.27 &     0.45\\
\hline
\multirow{2}{*}{\begin{tabular}{l}$N = 200$\\$R = 100$\end{tabular}} & 40 & 3 & [0.01] &     0.00 &     0.00 &     0.00 &     0.00 &     0.00 &     0.00 &     0.00 &     0.00\\
 & 50 & 4 & [0.02] & [0.03] &     0.00 &     0.00 &     0.00 &     0.00 &     0.00 &     0.00 &     0.00\\
 & 60 & 5 & [0.05] & [0.15] & [0.37] &     0.06 &     0.12 &     0.01 &     0.01 &     0.01 &     0.01\\
 & 70 & 6 & [0.05] & [0.26] & [1.41] &     0.35 &     0.16 &     0.16 &     0.02 &     0.02 &     0.02\\
 & 80 & 8 & [0.66] & [17.50] &     0.98 &     0.51 &     0.33 &     0.46 &     0.41 &     0.32 &     0.31\\
 & 90 & 10 & [11.47] &    42.41 &    13.16 &     2.87 &     3.17 &     0.95 &     0.53 &     0.93 &     1.02\\
 & 100 & 13 & TL & TL & TL &    29.60 &    22.79 &     9.55 &     8.64 &     5.33 &     5.76\\
\hline
\multirow{2}{*}{\begin{tabular}{l}$N = 300$\\$R = 100$\end{tabular}} & 40 & 5 & [0.12] &     0.04 &     0.01 &     0.01 &     0.01 &     0.01 &     0.01 &     0.01 &     0.01\\
 & 50 & 6 & [0.25] & [0.72] &     0.10 &     0.05 &     0.05 &     0.05 &     0.05 &     0.05 &     0.05\\
 & 60 & 7 & [0.54] &     0.12 &     0.15 &     0.13 &     0.12 &     0.12 &     0.11 &     0.12 &     0.12\\
 & 70 & 9 & [6.69] &     3.61 &     1.77 &     0.51 &     0.56 &     0.36 &     0.44 &     0.44 &     0.41\\
 & 80 & 11 & [47.60] &   281.85 &     8.34 &     2.25 &     1.23 &     0.56 &     1.05 &     0.60 &     0.60\\
 & 90 & 13 & [51.26] & TL & TL &   136.61 &     2.46 &     3.10 &     2.60 &     1.93 &     1.61\\
 & 100 & 18 & TL & TL & TL &    57.75 &    30.93 &    24.78 &    24.70 &    12.40 &    16.47\\
\hline
\multirow{2}{*}{\begin{tabular}{l}$N = 400$\\$R = 100$\end{tabular}} & 40 & 6 &     0.01 &     0.01 &     0.01 &     0.01 &     0.01 &     0.01 &     0.01 &     0.01 &     0.01\\
 & 50 & 7 & [0.07] & [1.63] &     0.17 &     0.13 &     0.11 &     0.10 &     0.10 &     0.09 &     0.09\\
 & 60 & 9 & [0.19] &     0.36 &     0.20 &     0.18 &     0.12 &     0.12 &     0.12 &     0.12 &     0.12\\
 & 70 & 11 & [13.32] &    59.16 &     0.53 &     0.59 &     0.18 &     0.18 &     0.18 &     0.18 &     0.18\\
 & 80 & 13 & [30.12] & TL & TL &     1.58 &     2.62 &     1.33 &     1.57 &     1.29 &     1.29\\
 & 90 & 17 & TL & TL & TL & TL &   156.76 &    10.31 &    10.98 &    11.18 &     5.74\\
 & 100 & 22 & TL & TL &    86.36 &   235.77 &    35.47 &    46.89 &    20.23 &    22.28 &    36.15\\
\hline
\multirow{2}{*}{\begin{tabular}{l}$N = 500$\\$R = 100$\end{tabular}} & 40 & 6 & [0.06] &     0.04 &     0.03 &     0.03 &     0.03 &     0.03 &     0.03 &     0.03 &     0.03\\
 & 50 & 8 &     0.16 &     0.10 &     0.06 &     0.06 &     0.06 &     0.06 &     0.08 &     0.06 &     0.06\\
 & 60 & 10 & [1.68] & [18.89] &     0.62 &     0.42 &     0.35 &     0.22 &     0.23 &     0.34 &     0.21\\
 & 70 & 13 & [0.43] & TL &     3.96 &     1.43 &     0.66 &     0.65 &     0.68 &     0.68 &     0.68\\
 & 80 & 16 & [239.20] & TL &   108.15 &     7.43 &     2.66 &     2.33 &     1.99 &     2.90 &     2.99\\
 & 90 & 20 & TL & TL & TL &    15.25 &    15.54 &     5.38 &    10.90 &    14.16 &     7.56\\
 & 100 & 26 & TL & TL &   183.78 &   131.30 &    45.96 &    47.92 &    53.72 &    36.43 &    48.94\\
\hline
\multicolumn{3}{r}{\textbf{Average:}} & \multicolumn{1}{r}{\textbf{   62.99}} & \multicolumn{1}{r}{\textbf{   98.04}} & \multicolumn{1}{r}{\textbf{   64.56}} & \multicolumn{1}{r}{\textbf{   26.50}} & \multicolumn{1}{r}{\textbf{    9.24}} & \multicolumn{1}{r}{\textbf{    4.46}} & \multicolumn{1}{r}{\textbf{    3.99}} & \multicolumn{1}{r}{\textbf{    3.21}} & \multicolumn{1}{r}{\textbf{    3.74}}\\
\end{tabular}
}
\caption{Results: picker rack sequencing, no revisits allowed, $O=100$ orders, PRSF1}
\label{table6}
\end{table}

\begin{table}[tbp]
\centering
{\tiny
\renewcommand{\tabcolsep}{1mm} 
\renewcommand{\arraystretch}{1.3} 
\begin{tabular}{|c|c|c|rrrrrrrrr|}
\hline
Instance & $C_p$ & $K$ & \multicolumn{1}{r}{$B {=} 2$} & \multicolumn{1}{r}{$B {=} 3$} & \multicolumn{1}{r}{$B {=} 4$} & \multicolumn{1}{r}{$B {=} 5$} & \multicolumn{1}{r}{$B {=} 6$} & \multicolumn{1}{r}{$B {=} 7$} & \multicolumn{1}{r}{$B {=} 8$} & \multicolumn{1}{r}{$B {=} 9$} & \multicolumn{1}{r|}{$B {=} 10$}\\
\hline
\multirow{2}{*}{\begin{tabular}{l}$N = 100$\\$R = 100$\end{tabular}} & 70 & 3 & [0.02] & [0.07] &     0.01 &     0.01 &     0.01 &     0.01 &     0.01 &     0.01 &     0.01\\
 & 90 & 4 & [0.05] & [0.10] & [0.31] &     0.30 &     0.31 &     0.09 &     0.09 &     0.08 &     0.01\\
 & 110 & 5 & [0.23] & [0.45] & [1.14] & [1.95] & [3.54] &     0.82 &     0.56 &     0.37 &     0.28\\
 & 130 & 7 & [1.36] & [10.13] & [57.04] &     3.19 &     3.44 &    13.32 &     2.39 &     1.59 &     1.42\\
 & 150 & 10 & [209.73] & TL & TL & TL &    67.27 & TL &    94.23 &    14.48 &    16.19\\
\hline
\multirow{2}{*}{\begin{tabular}{l}$N = 200$\\$R = 100$\end{tabular}} & 70 & 5 & [0.03] &     0.07 &     0.04 &     0.01 &     0.01 &     0.01 &     0.01 &     0.01 &     0.01\\
 & 90 & 6 & [0.06] & [0.08] & [0.92] &     0.52 &     0.12 &     0.12 &     0.11 &     0.07 &     0.08\\
 & 110 & 8 & [0.37] & [3.64] & [15.59] & [179.80] &     1.11 &     1.16 &     0.50 &     1.17 &     0.41\\
 & 130 & 11 & [3.70] & TL & TL & TL &    91.51 &     5.45 &     6.46 &     6.33 &     5.97\\
 & 150 & 16 & TL & TL & TL & TL & TL &    77.24 &    62.47 &    55.01 &    76.32\\
\hline
\multirow{2}{*}{\begin{tabular}{l}$N = 300$\\$R = 100$\end{tabular}}  & 70 & 6 & [0.04] & [0.13] &     0.05 &     0.06 &     0.05 &     0.05 &     0.05 &     0.05 &     0.05\\
 & 90 & 9 & [0.65] & [25.06] &    19.96 &     0.25 &     0.33 &     0.27 &     0.25 &     0.25 &     0.25\\
 & 110 & 12 & [2.29] & TL & TL & TL &     5.09 &     3.82 &     2.65 &     2.31 &     2.08\\
 & 130 & 15 & [1.91] & TL & TL & TL & TL &   182.14 &   144.90 &    30.38 &   215.78\\
 & 150 & 22 & TL & TL & TL & TL & TL & TL & TL &   130.83 &   248.68\\
\hline
\multirow{2}{*}{\begin{tabular}{l}$N = 400$\\$R = 150$\end{tabular}} & 70 & 7 & [0.04] & [0.36] &     0.10 &     0.14 &     0.09 &     0.06 &     0.06 &     0.06 &     0.06\\
 & 90 & 9 & [0.22] & [7.11] &     1.10 &     0.27 &     0.31 &     0.21 &     0.20 &     0.21 &     0.11\\
 & 110 & 12 & [1.09] & TL &     5.39 &     1.37 &     1.60 &     2.45 &     1.59 &     1.50 &     1.34\\
 & 130 & 17 & [200.22] & TL &    33.96 &   174.41 &    16.52 &    13.70 &     8.34 &     7.16 &    12.81\\
 & 150 & 24 & TL & TL & TL & TL & TL &   141.38 &    97.72 &   119.13 &   107.00\\
\hline
\multirow{2}{*}{\begin{tabular}{l}$N = 500$\\$R = 150$\end{tabular}} & 70 & 8 & [1.89] &     0.19 &     0.12 &     0.09 &     0.09 &     0.09 &     0.09 &     0.09 &     0.09\\
 & 90 & 11 & [2.87] & [282.65] &     0.35 &     0.38 &     0.32 &     0.29 &     0.29 &     0.33 &     0.33\\
 & 110 & 15 & [9.42] & TL & TL &     4.15 &     6.90 &     1.44 &     2.14 &     2.13 &     1.72\\
 & 130 & 20 & [3.29] & TL & TL & TL & TL & TL &   240.58 &   216.74 &    30.76\\
 & 150 & 29 & TL & TL & TL & TL &   248.85 &   279.56 & TL & TL &   227.78\\
\hline
\multicolumn{3}{r}{\textbf{Average:}} & \multicolumn{1}{r}{\textbf{   65.58}} & \multicolumn{1}{r}{\textbf{  157.20}} & \multicolumn{1}{r}{\textbf{  125.44}} & \multicolumn{1}{r}{\textbf{  122.68}} & \multicolumn{1}{r}{\textbf{   77.90}} & \multicolumn{1}{r}{\textbf{   64.95}} & \multicolumn{1}{r}{\textbf{   50.63}} & \multicolumn{1}{r}{\textbf{   35.61}} & \multicolumn{1}{r}{\textbf{   37.98}}\\
\end{tabular}
}
\caption{Results: picker rack sequencing, no revisits allowed, $O=150$ orders, PRSF1}
\label{table150org}
\end{table}


\begin{table}[tbp]
\centering
{\tiny
\renewcommand{\tabcolsep}{1mm} 
\renewcommand{\arraystretch}{1.3} 
\begin{tabular}{|c|cc|rrrrrrrrr|}
\hline
& Formulation & &\multicolumn{1}{c}{$B {=} 2$} & \multicolumn{1}{c}{$B {=} 3$} & \multicolumn{1}{c}{$B {=} 4$} & \multicolumn{1}{c}{$B {=} 5$} & \multicolumn{1}{c}{$B {=} 6$} & \multicolumn{1}{c}{$B {=} 7$} & \multicolumn{1}{c}{$B {=} 8$} & \multicolumn{1}{c}{$B {=} 9$} & \multicolumn{1}{c|}{$B {=} 10$}\\
\thickhline
\multirow{6}{*}{$O=50$} 
& \multirow{2}{*}{ PRSF, Table~\ref{table5a}} 
 & Average time & 3.89	& 2.18	& 0.59	
& 0.42	& 0.42	& 0.25	& 0.25	
& 0.24	& 
0.23 \\
& & Number infeasible/TL &0/0 & 0/0 & 0/0 & 0/0 & 0/0 & 0/0 & 0/0 & 0/0 & 0/0 \\
\cline{2-12}
& \multirow{2}{*}{ PRSF1, Table~\ref{table5}} 
& Average time &  27.03	& 0.75	& 0.38
& 	0.34	& 0.24	& 0.22	& 0.23	
& 0.21	 & 0.19 \\
& & Number infeasible/TL &11/1 & 0/0 & 0/0 & 0/0 & 0/0 & 0/0 & 0/0 & 0/0 & 0/0 \\
\cline{2-12}

& \multirow{2}{*}{Both, Parallel} 
& Average time & 1.94	& 0.67	& 0.37 &	0.33	
& 0.22	& 0.16	& 0.18	& 0.17 &	0.16 \\
& & Number infeasible/TL &0/0 & 0/0 & 0/0 & 0/0 & 0/0 & 0/0 & 0/0 & 0/0 & 0/0 \\
\thickhline
\multirow{6}{*}{$O=100$} 
& \multirow{2}{*}{ PRSF, Table~\ref{table6a}} 
& Average time & 88.34	& 69.99	& 43.48	
& 16.48	& 18.44	& 15.66	& 10.39
& 	8.33 &	7.48 \\
& & Number infeasible/TL &10/10 & 5/7 & 0/4 & 0/0 & 0/0 & 0/0 & 0/0 & 0/0 & 0/0 \\
\cline{2-12}
& \multirow{2}{*}{ PRSF1, Table~\ref{table6}} 
& Average time & 62.99	& 98.04	& 64.56	& 26.50
& 	9.24	& 4.46	& 3.99	& 3.21	& 3.74 \\
& & Number infeasible/TL &25/6 & 10/10 & 5/6 & 0/1 & 0/0 & 0/0 & 0/0 & 0/0 & 0/0 \\
\cline{2-12}

& \multirow{2}{*}{ Both, Parallel} 
& Average time & 88.18	& 69.98	& 33.94	
& 11.99	& 6.87	& 4.45	& 3.99 &	3.15 &	3.51 \\
& & Number infeasible/TL &10/10 & 5/7 & 0/2 & 0/0 & 0/0 & 0/0 & 0/0 & 0/0 & 0/0 \\
\thickhline
\multirow{6}{*}{$O=150$} 
& \multirow{2}{*}{ PRSF, Table~\ref{table150e}} 
& Average time & 154.87	& 145.03 &	113.12 &	94.67 &
	78.10	 &64.47	 &53.41	& 60.74	& 46.35 \\
& & Number infeasible/TL &11/12 & 5/11 & 2/8 & 1/7 & 0/5 & 0/3 & 0/3 & 0/4 & 0/3 \\
\cline{2-12}
& \multirow{2}{*}{ PRSF1, Table~\ref{table150org}} 
& Average time & 65.58	&157.20	&125.44	&122.68	&77.90	&64.95 &	50.63  &	35.61	 &37.98 \\
& & Number infeasible/TL &21/4 & 11/12 & 5/10 & 2/9 & 1/5 & 0/3 & 0/2 & 0/1 & 0/0 \\
\cline{2-12}

& \multirow{2}{*}{ Both, Parallel} 
& Average time & 154.87&	145.03	 &102.26 &	89.00	 &72.59	 &45.96	 &36.79 &	35.60	 &30.73 \\
& & Number infeasible/TL & 11/12 & 5/11 & 2/7 & 1/6 & 0/4 & 0/1 & 0/2 & 0/1 & 0/0 \\

\thickhline
\end{tabular}
}
\caption{Picker rack sequencing comparison }
\label{tableseqcom}
\end{table}

\subsubsection{Rack sequencing with revisits}

In order to investigate the problem of rack sequencing when rack revisits are allowed we took all of the problems with $O=100$ in 
Table~\ref{table6} for which either we proved (within our 300 second time limit, using PRSF1) that there was no feasible rack sequence (involving no revisits) or terminated at time limit without resolving the situation. Here termination at time limit means that we  have not found a feasible (no revisit) rack sequence, but neither have we proved that there is no possible (no revisit)  rack sequence.
We then investigated these problems with varying values of  $M$ (the maximum number of rack revisits allowed). 

Our results for applying PRSF1 with $M = 0,1,2,3$ revisits allowed, with Theorem~\ref{jebth2} applied, are shown in Table~\ref{table7}. 
All infeasible or unsolved instances from Table~\ref{table6} are given in Table~\ref{table7}, with the exception (due to space reasons) of $O = 100$, $N = 400$, $R = 100$ and $B = 5$, which was the only unresolved instance with $B \geq 5$. For all $M = 0,1,2,3$ it remained unresolved (having reached the time limit). 

There were originally 40 instances,
with times shown in square brackets in 
Table~\ref{table7},
 for which we proved, with $M = 0$, that no feasible rack sequencing solution existed.
Out of these 40, when $M = 1$ we were able to find 28 feasible solutions. Out of the remaining 12, we found another 6 with $M = 2$, and a further 3 with $M = 3$. Overall in only 3 of these 40 instances were we not able to find any feasible solution.
There were also 22 unresolved instances with $M=0$ (unresolved due to time limit). We found feasible solutions with $M = 1$ for 5 of these,  with $M = 2$ we found another 5 feasible solutions,  and with $M=3$ a further solution, leaving 11 unresolved instances.

Overall, of the 62 infeasible or unresolved instances in
Table~\ref{table7},
 we were able to find feasible rack sequences for 48 instances by allowing up to $3$ revisits.
In the light of the results in  Table~\ref{table7} we believe that there are 
potential benefits from running different values of $M$ in parallel and taking the feasible solution (if any are found) with the smallest $M$.

On  a technical note here when we allow rack revisits although all of the original racks (and their $M$ revisit copies) will be sequenced as a result of solving our formulation there is no constraint requiring a rack to supply any product to any order when presented to the picker. As a result  we automatically  know that if there is a feasible rack sequence for $M$ revisits then there is also a feasible rack sequence for $M{+}1$ revisits
(every rack in the $M$ revisit solution revisits one more time, but supplies no product). However we made no use of the solution for $M$ revisits in solving for $M{+}1$ revisits in the results shown.

Table~\ref{table7e} shows the results with PRSF for the same problems as considered in 
Table~\ref{table7}.
There were originally 15 instances
 in 
Table~\ref{table7e},
 for which we proved, with $M = 0$, that no feasible rack sequencing solution existed. When $M = 1$ we were able to find  feasible solutions for all of these instances. 
There were also 19 unresolved instances with $M=0$ (unresolved due to time limit). 18 of these instances were still unresolved with $M=1,2,3$. This is an indication of the additional computational effort needed to resolve PRSF as it involves more variables/constraints (as discussed previously above).

As before for rack sequencing with no revisits, parallel execution of both PRSF and PRSF1 yields benefits for rack sequencing with revisits. In Table~\ref{table7} the average computation time is 147.1 seconds with 85 instances going to time limit and in Table~\ref{table7e} the average computation time is 143.2 seconds with 103 instances going to time limit. With parallel running the average (elapsed) computation time is   113.7 seconds with 74 instances going to time limit.


\begin{table}[!htb]
\centering
{\tiny
\renewcommand{\tabcolsep}{1mm} 
\renewcommand{\arraystretch}{1.3} 
\begin{tabular}{|c|c|c|rrrr|rrrr|rrrr|}
\hline
  \multirow{2}{*}{$N$} & \multirow{2}{*}{$R$} & \multirow{2}{*}{$C_p$} & \multicolumn{4}{c|}{$B {=} 2$} & \multicolumn{4}{c|}{$B {=} 3$} & \multicolumn{4}{c|}{$B {=} 4$}\\
\cline{4-15}
 &  &  & \multicolumn{1}{c}{$M {=} 0$} & \multicolumn{1}{c}{$M {=} 1$} & \multicolumn{1}{c}{$M {=} 2$} & \multicolumn{1}{c|}{$M {=} 3$} & \multicolumn{1}{c}{$M {=} 0$} & \multicolumn{1}{c}{$M {=} 1$} & \multicolumn{1}{c}{$M {=} 2$} & \multicolumn{1}{c|}{$M {=} 3$} & \multicolumn{1}{c}{$M {=} 0$} & \multicolumn{1}{c}{$M {=} 1$} & \multicolumn{1}{c}{$M {=} 2$} & \multicolumn{1}{c|}{$M {=} 3$}\\

\hline
100 & 100 & 60 & [0.00] &     0.01 &     0.03 &     0.32  &  &  &  &  &  &  &  & \\
 & & 70 & [0.06] &     0.14 &     0.31 &     0.77 &  &  &  &  &  &  &  & \\
 & & 80 & [0.02] &     7.80 &     6.74 &    12.10 & [0.02] &     0.39 &     0.63 &     1.48 & [0.06] &     0.47 &     0.45 &     1.65\\
 & & 90 & [0.05] & [158.17] & TL &   211.62 & [0.05] &     1.75 &     2.69 &     2.75 & [0.20] &     0.98 &     2.28 &     4.65\\
 & & 100 & [0.63] & TL & TL & TL & [4.41] &    20.92 &    26.12 &    48.00 & [49.28] &    13.31 &    85.64 &    54.04\\
\hline
 200 & 100 & 40 &  [0.01] &     0.01 &     0.02 &     0.52 &  &  &  &  &  &  &  & \\
 & & 50 & [0.02] &     0.88 &     0.31 &     1.41 & [0.03] &     0.51 &     0.43 &     0.76 &  &  &  & \\
 & & 60 & [0.05] &    29.32 &     2.48 &     7.89 & [0.15] &     1.72 &     1.34 &     2.50 & [0.37] &     0.95 &     1.79 &     1.59\\
 & & 70 & [0.05] & TL &   172.31 &   143.12 & [0.26] &    15.80 &     7.62 &    12.72 & [1.41] &     4.18 &     7.70 &     7.47\\
 & & 80 & [0.66] & TL &   184.64 &   152.08 & [17.50] &    24.46 &    23.64 &    83.09 &  &  &  & \\
 & & 90 & [11.47] & TL & TL & TL &  &  &  &  &  &  &  & \\
 & & 100 & TL & TL & TL & TL & TL & TL & TL & TL & TL & TL &   263.09 & TL\\
\hline
 300 & 100 & 40 & [0.12] &     0.27 &     0.46 &     0.94 &  &  &  &  &  &  &  & \\
 & & 50 & [0.25] & TL &    12.48 &     2.94 & [0.72] &     0.87 &     1.31 &     2.47 &  &  &  & \\
 & & 60 & [0.54] &     2.70 &    10.23 &    13.53 &  &  &  &  &  &  &  & \\
 & & 70 & [6.69] &    45.27 &    25.17 &    11.52 &  &  &  &  &  &  &  & \\
 & & 80 & [47.60] & TL &   126.73 & TL &  &  &  &  &  &  &  & \\
 & & 90 & [51.26] & TL & TL &   298.58 & TL &    97.85 &   132.93 &    62.72 & TL &    51.18 &    56.75 &    69.53\\
 & & 100 & TL & TL & TL & TL & TL & TL & TL & TL & TL & TL & TL & TL\\
\hline
 400 & 100 & 50 & [0.07] &     6.59 &     8.01 &    10.42 & [1.63] &     2.39 &     4.83 &     3.35 &  &  &  & \\
 & & 60 & [0.19] &    14.13 &     9.25 &    23.59 &  &  &  &  &  &  &  & \\
 & & 70 & [13.32] &    27.95 &    25.77 &    15.54 &  &  &  &  &  &  &  & \\
 & & 80 & [30.12] & TL & TL & TL & TL & TL &    46.91 &    28.84 & TL &    29.50 &    17.58 &    23.22\\
 & & 90 & TL & TL & TL & TL & TL & TL &   144.53 &   214.35 & TL & TL &    56.41 & TL\\
 & & 100 & TL & TL & TL & TL & TL & TL & TL & TL &  &  &  & \\
\hline
 500 & 100 & 40 & [0.06] &     0.99 &     0.74 &     1.10 &  &  &  &  &  &  &  & \\
 & & 60 & [1.68] & TL &    33.38 &     9.79 & [18.89] &     8.19 &     6.32 &    12.95 &  &  &  & \\
 & & 70 & [0.43] & TL &   145.51 &   105.80 & TL &    25.23 &    45.93 &    31.24 &  &  &  & \\
 & & 80 & [239.20] & TL & TL &   238.59 & TL &    56.95 &    39.52 &    98.69 &  &  &  & \\
 & & 90 & TL & TL & TL & TL & TL & TL & TL &   178.83 & TL & TL &   200.37 &   295.54\\
 & & 100 & TL & TL & TL & TL & TL & TL & TL & TL &  &  &  & \\
\hline
\end{tabular}
}

\caption{Results: picker rack sequencing with revisits allowed, $O=100$, PRSF1}
\label{table7}
\end{table}

\begin{table}[tbp]
\centering
{\tiny
\renewcommand{\tabcolsep}{1mm} 
\renewcommand{\arraystretch}{1.3} 
\begin{tabular}{|c|c|c|rrrr|rrrr|rrrr|}
\hline
\multirow{2}{*}{$N$} & \multirow{2}{*}{$R$} & \multirow{2}{*}{$C_p$} & \multicolumn{4}{c|}{$B {=} 2$} & \multicolumn{4}{c|}{$B {=} 3$} & \multicolumn{4}{c|}{$B {=} 4$}\\
\cline{4-15}
 &  &  & \multicolumn{1}{c}{$M {=} 0$} & \multicolumn{1}{c}{$M {=} 1$} & \multicolumn{1}{c}{$M {=} 2$} & \multicolumn{1}{c|}{$M {=} 3$} & \multicolumn{1}{c}{$M {=} 0$} & \multicolumn{1}{c}{$M {=} 1$} & \multicolumn{1}{c}{$M {=} 2$} & \multicolumn{1}{c|}{$M {=} 3$} & \multicolumn{1}{c}{$M {=} 0$} & \multicolumn{1}{c}{$M {=} 1$} & \multicolumn{1}{c}{$M {=} 2$} & \multicolumn{1}{c|}{$M {=} 3$}\\
\hline
100 & 100 & 60 &     0.01 &     0.06 &     0.11 &     0.20 &  &  &  &  &  &  &  & \\
& & 70 &     0.05 &     0.12 &     0.22 &     0.39 &  &  &  &  &  &  &  & \\
& & 80 & [0.06] &     3.00 &     3.98 &    21.63 & [0.13] &     1.43 &     5.66 &     8.65 &     0.09 &     1.81 &     5.11 &     6.47\\
  & & 90 & [0.13] &     9.62 &    57.87 &    32.80 & [0.41] &     7.64 &    11.45 &    21.57 &     0.50 &     7.60 &     5.90 &    13.88\\
 &  & 100 & [17.70] &   210.73 &   147.46 & TL & [75.43] &    64.63 &   111.92 & TL &     7.01 &    48.85 &    71.13 &   148.50\\
\hline
 200 & 100 
& 40 &     0.01 &     0.05 &     0.70 &     1.04 &     &     &     &      &      &      &      &     \\
 & & 50 & [0.05] &     2.29 &     0.17 &     2.65 &     0.01 &     0.71 &     0.17 &     2.80 &  &  &  & \\
  & & 60 & [0.18] &    16.75 &     6.67 &    10.82 & [0.64] &     2.49 &     3.42 &     8.52 &     0.22 &     0.16 &     5.62 &     5.98\\
 &  & 70 & [0.37] &    40.55 &    92.07 &    33.58 & [1.34] &    22.28 &    34.62 &    34.17 &     0.93 &     7.49 &    48.04 &    30.73\\

 &  & 80 & [12.51] &   124.02 &   256.84 & TL &     2.68 &    15.53 &   121.13 &   285.23 &  &  &  & \\
& & 90 &    16.90 & TL & TL & TL &     &   &  &  &      &     &  & \\
  & & 100 & TL & TL & TL & TL & TL & TL & TL & TL &    52.11 & TL & TL & TL\\
\hline
 300 & 100
 & 40 &     0.09 &     0.79 &     1.75 &     0.35 &  &  &  &  &  &  &  & \\

 & & 50 & [0.79] &    12.65 &     5.58 &     6.47 &     0.35 &     2.91 &     6.99 &     8.81 &  &  &  & \\
 && 60 &     0.32 &    31.18 &    11.91 &    15.58 &  &  &  &  &  &  &  & \\     
& & 70 &     5.95 &    61.60 &   225.85 &    89.42 &  &  &  &  &  &  &  & \\     
     
 &  & 80 &    21.17 & TL & TL & TL &  &  &  &  &  &  &  & \\

  & & 90 & TL & TL & TL & TL &    21.12 & TL & TL & TL &    36.36 & TL & TL & TL\\

 &  & 100 & TL & TL & TL & TL & TL & TL & TL & TL & TL & TL & TL & TL\\
\hline
 400 & 100 
 & 50 & [2.08] &     4.70 &    11.38 &    21.73 &     0.45 &     4.28 &     5.67 &    10.83 &    &     &     &     \\
  && 60 &     0.77 &    27.38 &    33.40 &    32.97 &  &  &  &  &  &  &  & \\     
  & & 70 &     5.32 &    85.99 &    85.40 & TL &  &  &  &  &  &  &  & \\     

& & 80 & TL & TL & TL & TL & TL &   149.04 & TL & TL &   131.27 &   126.27 & TL & TL\\
& & 90 & TL & TL & TL & TL & TL & TL & TL & TL & TL & TL & TL & TL\\
& & 100 & TL & TL & TL & TL & TL & TL & TL & TL &  &  &  & \\

\hline
 500 & 100 
& 40 &     0.08 &     2.11 &     2.31 &     3.27  &  &  &  &  &  &  &  & \\

 & & 60 & [7.04] &    22.51 &    56.37 &    33.52 &     1.87 &    16.16 &    14.93 &    30.58 &  &  &  & \\

 & & 70 & TL & TL & TL & TL &   211.68 &    97.92 & TL & TL &  &  &  & \\
& & 80 & TL & TL & TL & TL &    13.82 &   285.38 & TL & TL &    &   &  & \\
&  & 90 & TL & TL & TL & TL & TL & TL & TL & TL &    57.88 & TL & TL & TL\\
& & 100 & TL & TL & TL & TL & TL & TL & TL & TL &  & &  & \\

 \hline
\end{tabular}
}

\caption{Results: picker rack sequencing with revisits allowed, $O=100$, PRSF}
\label{table7e}
\end{table}


\subsection{Miniload instances}

We commented above that the work presented in this paper, both for order and rack allocation, and rack sequencing, applies to other automated fulfilment systems. In particular to miniload systems~(\cite{bozer18, fusser19}) where conveyors (rather than mobile robots) bring racks of product to pickers. The major difference between using mobile robots and a miniload system for picking customer orders in the B2C market relates to the nature of the racks presented to the picker. With mobile robots these racks typically contain many different products, but with only a few units of each product. In miniload systems the racks (more commonly referred to as trays)   that are brought to the pickers by conveyors will typically contain just a  few products, but many units of each product.

Although the predominant focus of this paper has been on mobile robots we did consider a limited number of problem instances with characteristics more appropriate to a miniload environment
in order to provide some insight into how the work presented in this paper performs computationally in a miniload environment.

We took the set of $O=50$ orders as in Table~\ref{table1}, but amended the racks such that each rack now contains only four distinct products, but with sufficient in each rack to supply all orders for those products, so $s_{ir} =  \sum_{o=1}^O  q_{io}$ if rack $r$ contains product $i$,  zero otherwise.  The results for order and rack allocation for these instances with miniload characteristics are shown in Table~\ref{mini50}. This table has the same format as Table~\ref{table1}. The results for rack sequencing for these miniload instances using PRSF are shown in Table~\ref{mini50seq}. This table has the same format as Table~\ref{table5}.

With respect to Table~\ref{mini50} we can see that for all instances with one picker our order and rack formulation solves the problem optimally very quickly. Although the average computation time for these instances is higher than for the instances (albeit with different characteristics)  seen in Table~\ref{table1} all but three of 35 problems in  Table~\ref{mini50} are solved to proven optimality within our 300 second time limit. 
With respect to Table~\ref{mini50seq} we  proved that there was no feasible rack sequence, or found a feasible rack sequence,  for all of the 180 instances in Table~\ref{mini50seq} within our 300 second time limit.

\begin{table}[tbp]
\centering
{\tiny
\renewcommand{\tabcolsep}{1mm} 
\renewcommand{\arraystretch}{1.4} 
\begin{tabular}{|c|c|c|rrrrrrrr|}
\hline
Instance & $P$ & $C_p$ & \multicolumn{1}{c}{T(s)} & \multicolumn{1}{c}{B(s)} & \multicolumn{1}{c}{UB} & \multicolumn{1}{c}{GAP} & \multicolumn{1}{c}{LB} & \multicolumn{1}{c}{FGAP} & \multicolumn{1}{c}{FLB} & \multicolumn{1}{c|}{NS}\\
\hline
\multirow{2}{*}{\begin{tabular}{l}$N = 100$\\$R = 50$\end{tabular}} 
& 1 & 25 &     0.01 &     0.01 & 5 & -- & UB &     0.00 &     5.00 & 1\\
 &  & 50 &     0.00 &     0.00 & 16 & -- & UB &     0.00 &    16.00 & 1\\
\cline{2-11}
 & 5 & 5 &     7.02 &     0.66 & 7 & -- & UB &    28.57 &     5.00 & 2298\\
 &  & 7 &    17.72 &     2.03 & 10 & -- & UB &    24.78 &     7.52 & 3109\\
 &  & 10 &     2.76 &     2.64 & 17 & -- & UB &     5.88 &    16.00 & 367\\
\cline{2-11}
 & 10 & 2 &     0.14 &     0.13 & 10 & -- & UB &     0.00 &    10.00 & 1\\
 &  & 5 &   295.08 &    80.03 & 20 & -- & UB &    20.00 &    16.00 & 20762\\
\cline{1-11}
\multirow{2}{*}{\begin{tabular}{l}$N = 200$\\$R = 75$\end{tabular}} 
& 1 & 25 &     0.01 &     0.01 & 6 & -- & UB &     0.00 &     6.00 & 1\\
 &  & 50 &     0.02 &     0.01 & 17 & -- & UB &     0.00 &    17.00 & 1\\
\cline{2-11}
 & 5 & 5 &   241.62 &     0.63 & 8 & -- & UB &    31.25 &     5.50 & 119250\\
 &  & 7 &     3.40 &     1.05 & 10 & -- & UB &    17.53 &     8.25 & 750\\
 &  & 10 &     4.91 &     1.85 & 18 & -- & UB &     9.51 &    16.29 & 990\\
\cline{2-11}
 & 10 & 2 &     0.13 &     0.07 & 10 & -- & UB &     0.00 &    10.00 & 1\\
 &  & 5 &   151.79 &   117.56 & 20 & -- & UB &    18.55 &    16.29 & 9396\\
\cline{1-11}
\multirow{2}{*}{\begin{tabular}{l}$N = 300$\\$R = 75$\end{tabular}} 
& 1 & 25 &     0.01 &     0.00 & 5 & -- & UB &     0.00 &     5.00 & 1\\
 &  & 50 &     0.01 &     0.00 & 16 & -- & UB &     0.00 &    16.00 & 1\\
\cline{2-11}
 & 5 & 5 &     3.57 &     0.91 & 6 & -- & UB &    16.67 &     5.00 & 1028\\
 &  & 7 &     2.48 &     1.99 & 9 & -- & UB &    15.89 &     7.57 & 339\\
 &  & 10 &     1.73 &     1.73 & 16 & -- & UB &     2.50 &    15.60 & 124\\
\cline{2-11}
 & 10 & 2 &     0.09 &     0.06 & 10 & -- & UB &     0.00 &    10.00 & 1\\
 &  & 5 &    66.81 &    66.81 & 17 & -- & UB &     8.82 &    15.50 & 5643\\
\cline{1-11}
\multirow{2}{*}{\begin{tabular}{l}$N = 400$\\$R = 75$\end{tabular}} & 1 & 25 &     0.05 &     0.01 & 7 & -- & UB &     0.00 &     7.00 & 1\\
 &  & 50 &     0.01 &     0.00 & 20 & -- & UB &     0.00 &    20.00 & 1\\
\cline{2-11}
 & 5 & 5 & TL &     0.63 & 10 &    15.78 &     8.42 &    31.11 &     6.89 & 186649\\
 &  & 7 &    13.41 &    13.20 & 12 & -- & UB &    15.42 &    10.15 & 2689\\
 &  & 10 &     9.91 &     9.91 & 20 & -- & UB &     0.00 &    20.00 & 789\\
\cline{2-11}
 & 10 & 2 &     0.20 &     0.06 & 10 & -- & UB &     0.00 &    10.00 & 1\\
 &  & 5 & TL &    81.69 & 23 &     5.27 &    21.79 &    13.04 &    20.00 & 22635\\
\cline{1-11}
\multirow{2}{*}{\begin{tabular}{l}$N = 500$\\$R = 75$\end{tabular}} & 1 & 25 &     0.04 &     0.01 & 7 & -- & UB &     0.00 &     7.00 & 1\\
 &  & 50 &     0.01 &     0.00 & 19 & -- & UB &     0.00 &    19.00 & 1\\
\cline{2-11}
 & 5 & 5 & TL &     0.46 & 10 &    22.93 &     7.71 &    35.45 &     6.46 & 229648\\
 &  & 7 &    23.30 &     6.66 & 12 & -- & UB &    18.86 &     9.74 & 5094\\
 &  & 10 &     6.91 &     6.91 & 19 & -- & UB &     3.57 &    18.32 & 1221\\
\cline{2-11}
 & 10 & 2 &     0.15 &     0.06 & 10 & -- & UB &     0.00 &    10.00 & 1\\
 &  & 5 &    35.02 &    11.09 & 22 & -- & UB &    15.94 &    18.49 & 2050\\
\hline
\multicolumn{3}{r}{\textbf{Average:}} & \multicolumn{1}{r}{\textbf{   51.09}} & \multicolumn{1}{r}{\textbf{   11.68}} & \multicolumn{1}{c}{} & \multicolumn{1}{r}{\textbf{    1.68}} & \multicolumn{1}{c}{} & \multicolumn{1}{r}{\textbf{    9.52}} & \multicolumn{1}{c}{} & \multicolumn{1}{r}{\textbf{17567}}\end{tabular}
}
\caption{Results: order and rack allocation, miniload instances, $O=50$ orders}
\label{mini50}
\end{table}

\begin{table}[tbp]
\centering
{\tiny
\renewcommand{\tabcolsep}{1mm} 
\renewcommand{\arraystretch}{1.3} 
\begin{tabular}{|c|c|c|rrrrrrrrr|}
\hline
\multicolumn{1}{|c|}{Instance} & \multicolumn{1}{|c|}{$C_p$} & \multicolumn{1}{|c|}{$K$} & \multicolumn{1}{c}{$B{=}2$} & \multicolumn{1}{c}{$B{=}3$} & \multicolumn{1}{c}{$B{=}4$} & \multicolumn{1}{c}{$B{=}5$} & \multicolumn{1}{c}{$B{=}6$} & \multicolumn{1}{c}{$B{=}7$} & \multicolumn{1}{c}{$B{=}8$} & \multicolumn{1}{c}{$B{=}9$} & \multicolumn{1}{c|}{$B{=}10$}\\
\hline
\multirow{2}{*}{\begin{tabular}{l}$N = 100$\\$R = 50$\end{tabular}} 
& 20 & 4 &     0.01 &     0.01 &     0.01 &     0.01 &     0.01 &     0.01 &     0.01 &     0.01 &     0.01\\
 & 30 & 7 & [0.32] &     0.14 &     0.03 &     0.03 &     0.03 &     0.03 &     0.03 &     0.03 &     0.03\\
 & 40 & 10 & [19.32] &     2.52 &     1.59 &     0.66 &     0.26 &     0.26 &     0.26 &     0.26 &     0.26\\
 & 50 & 16 &    19.45 &     5.38 &     4.31 &     3.98 &     3.60 &     3.60 &     1.45 &     1.43 &     1.43\\
\hline
\multirow{2}{*}{\begin{tabular}{l}$N = 200$\\$R = 75$\end{tabular}} 
& 20 & 5 &     0.01 &     0.01 &     0.01 &     0.01 &     0.01 &     0.01 &     0.01 &     0.01 &     0.01\\
 & 30 & 7 &     0.10 &     0.10 &     0.02 &     0.02 &     0.02 &     0.02 &     0.02 &     0.02 &     0.02\\
 & 40 & 11 &     0.39 &     0.24 &     0.30 &     0.32 &     0.31 &     0.31 &     0.31 &     0.24 &     0.33\\
 & 50 & 17 &     7.42 &     2.61 &     2.00 &     1.41 &     1.25 &     1.25 &     1.25 &     1.25 &     1.25\\
\hline
\multirow{2}{*}{\begin{tabular}{l}$N = 300$\\$R = 75$\end{tabular}} 
& 20 & 4 &     0.01 &     0.01 &     0.01 &     0.01 &     0.01 &     0.01 &     0.01 &     0.01 &     0.01\\
 & 30 & 7 &     0.02 &     0.02 &     0.02 &     0.02 &     0.02 &     0.02 &     0.02 &     0.02 &     0.02\\
 & 40 & 10 &     0.22 &     0.22 &     0.22 &     0.19 &     0.22 &     0.19 &     0.19 &     0.19 &     0.19\\
 & 50 & 16 &     0.87 &     0.54 &     0.54 &     0.54 &     0.54 &     0.54 &     0.54 &     0.54 &     0.54\\
\hline
\multirow{2}{*}{\begin{tabular}{l}$N = 400$\\$R = 75$\end{tabular}} 
& 20 & 6 &     0.01 &     0.01 &     0.01 &     0.01 &     0.01 &     0.01 &     0.01 &     0.01 &     0.01\\
 & 30 & 9 &     0.20 &     0.12 &     0.12 &     0.12 &     0.14 &     0.14 &     0.14 &     0.14 &     0.11\\
 & 40 & 13 &     0.97 &     0.93 &     0.54 &     0.69 &     0.53 &     0.54 &     0.53 &     0.54 &     0.53\\
 & 50 & 20 &    12.72 &    36.46 &     2.96 &     5.80 &     2.68 &     2.87 &     3.03 &     3.01 &     3.01\\
\hline
\multirow{2}{*}{\begin{tabular}{l}$N = 500$\\$R = 75$\end{tabular}} 
& 20 & 5 &     0.01 &     0.01 &     0.01 &     0.01 &     0.01 &     0.01 &     0.01 &     0.01 &     0.01\\
 & 30 & 9 &     0.16 &     0.16 &     0.03 &     0.03 &     0.03 &     0.03 &     0.03 &     0.03 &     0.03\\
 & 40 & 12 &     0.94 &     0.46 &     0.44 &     0.44 &     0.44 &     0.44 &     0.44 &     0.44 &     0.48\\
 & 50 & 19 &    13.35 &     2.31 &    16.81 &     4.14 &     3.36 &     3.30 &     2.59 &     2.57 &     2.57\\
\hline
\multicolumn{3}{r}{\textbf{Average:}} & \multicolumn{1}{r}{\textbf{    3.82}} & \multicolumn{1}{r}{\textbf{    2.61}} & \multicolumn{1}{r}{\textbf{    1.50}} & \multicolumn{1}{r}{\textbf{    0.92}} & \multicolumn{1}{r}{\textbf{    0.67}} & \multicolumn{1}{r}{\textbf{    0.68}} & \multicolumn{1}{r}{\textbf{    0.54}} & \multicolumn{1}{r}{\textbf{    0.54}} & \multicolumn{1}{r}{\textbf{    0.54}}
\end{tabular}
}
\caption{Results: picker rack sequencing, no revisits allowed, miniload instances, $O=50$ orders, PRSF}
\label{mini50seq}
\end{table}

\subsection{Research questions and contribution}

Recall the  research questions posed previously above that we aimed to address in this paper:  

\begin{itemize}
\item Is it possible to develop a mathematical formulation for the problem for simultaneously allocating both orders and racks to multiple pickers? Moreover, if the answer to this question is positive, can that formulation then be used as an explicit basis for the development of computationally effective solution algorithms? 

\item  Is it possible to develop a mathematical formulation for the rack sequencing problem, both when a rack can only visit a picker once and when rack revisits are allowed, that explicitly considers rack inventory positions and decides the number of units of each product allocated to each order from each rack? Moreover, if the answer to this question is positive, can that formulation then be used as an explicit basis for the development of a computationally effective solution algorithm?

\end{itemize}

\noindent We believe that the work presented in this paper has answered these questions in a positive way. Specifically with regard to our first research question we have:
\begin{compactitem}
\item presented a formulation for simultaneously allocating both orders and racks to multiple pickers in Section~\ref{sec:formjeb}
\item proposed heuristics explicitly based upon this formulation in Section~\ref{sec:heuristics}
\item presented computational results for a large number of test instances both for direct solution of our formulation using Cplex (Section~\ref{section72}) and for our heuristics 
(Sections~\ref{section73},\ref{section74})
\end{compactitem}

\vspace{\baselineskip}
\noindent
With regard to our second research question we have:
\begin{compactitem}
\item presented a formulation in Section~\ref{sec:formjebrack}
for the rack sequencing problem, both when a rack can only visit a picker once and when rack revisits are allowed, that explicitly considers rack inventory positions and decides the number of units of each product allocated to each order from each rack
\item presented computational results for a large number of test instances via direct solution of our formulation using Cplex (Section~\ref{section75})
\end{compactitem}

\vspace{\baselineskip}
\noindent
In the light of the work presented above we
 believe that 
the contribution of this paper is:
\begin{compactitem}

\item to be one of the first in the literature to present an optimisation based approach for simultaneously allocating both orders and racks to multiple pickers

\item to use our order and rack allocation formulation as the basis for two matheuristics for the problem of simultaneously allocating both orders and racks to multiple pickers

\item to present an innovative formulation for the rack sequencing problem, both when a rack can only visit a picker once and when rack revisits are allowed, that explicitly considers rack inventory positions and decides the number of units of each product allocated to each order from each rack

\item to prove that, subject to certain conditions being satisfied, in   generating a feasible rack sequence for a single picker we can  initially neglect  all orders which comprise a single unit of one product since a feasible rack sequence for the remaining orders
can be easily modified to incorporate the neglected orders

\item to investigate how our approaches for order and rack allocation, and rack sequencing, perform computationally for test problems that are made publicly available for use by future researchers

\end{compactitem}

\vspace{\baselineskip}
\noindent In terms of the computation results presented (on the test problems considered) we believe that this paper demonstrates that:
\begin{itemize}
\item for order and rack allocation:
\begin{itemize}
\item  optimal solutions can be found very quickly using our order and rack allocation formulation for problems with $O \leq 150$ orders  when we have just a  single picker
\item our single picker based (SPB) and partial integer optimisation (PIO) heuristics perform well as the number of orders increases
\item there are benefits from running a number of the solution approaches given in this paper in parallel and taking the best solution found after a predefined time limit
\end{itemize}
\item for rack sequencing:
\begin{itemize}
\item feasible rack sequences (with no revisits) can be found very quickly in many cases, even for a relatively small number of bin positions
\item feasible rack sequences (when revisits are allowed) can be found for most cases where the approach with no revisits did not find a feasible rack sequence
 \end{itemize}
\end{itemize}

\section{Conclusions and future work}
\label{sec:conclusions}

In this paper we 
considered the problem of simultaneously 
 allocating orders and mobile storage racks to pickers.  
We presented a formulation of the problem as an integer program
 and
 discussed the complexity of the problem. We presented two heuristics (matheuristics) for the problem,
one using partial integer optimisation, that were directly based upon our formulation.

We considered the problem of how to sequence the racks for presentation at each individual picker and
formulated this problem as an integer program, both when no rack revisits to the picker are allowed and
when rack revisits are allowed. We proved that, subject to certain conditions being satisfied, a feasible
rack sequence for all orders can be produced by focusing on just a subset of the orders to be dealt with by the picker.

We discussed the application of the work presented in this paper within a dynamic
environment, as well as the wider applicability of our work 
to other automated (non-mobile robot) fulfilment systems.

Computational results were presented, both for order and rack allocation, and for rack sequencing,  for test problems (that are made publicly
available) involving up to 1000 products, 200 orders, 500 racks and 10 pickers. 

As noted in our literature  survey much remains to be done in the scientific literature with regard to  robotic mobile fulfilment systems. We hope that this paper will encourage other researchers  to turn their attention to problems such as those considered in this paper.

In future we plan to focus our research on studying new approaches for the order and rack allocation problem. We aim to develop valid inequalities that could be added to our formulation of this problem to improve the linear programming relaxation bound, thereby allowing larger instances to be solved optimally within a given time limit. 
Being able to solve larger instances would also (potentially) improve the performance of our two matheuristics for the problem.
We also intend  to develop problem-specific metaheuristics for the problem.

 \clearpage
\newpage
 \pagestyle{empty}
\linespread{1}
\small \normalsize

\section*{Acknowledgments}

\noindent
Cristiano Arbex Valle was funded by FAPEMIG grant APQ-01267-18.

\vspace{\baselineskip}
\noindent
The authors would like to thank the anonymous reviewers for their comments and suggestions to improve the quality of the paper. 


\bibliographystyle{plainnat}
\bibliography{tesco}

\end{document}